\newtheorem{theorem}{Theorem}[section]
\newtheorem{proposition}[theorem]{Proposition}
\newtheorem{lemma}[theorem]{Lemma}
\newtheorem{definition}[theorem]{Definition}
\newtheorem{remark}[theorem]{Remark}
\newtheorem*{theorem*}{Theorem}
\newcommand{\divv}{\operatorname{div}}
\newcommand{\diag}{\operatorname{diag}}
\newcommand{\sym}{\operatorname{sym}}
\newcommand{\loc}{\mathrm{loc}}
\newcommand{\vp}{\varphi}
\newcommand{\uin}{u^\mathrm{in}}
\newcommand{\feBS}{\widetilde f_1}
\newcommand{\feR}{\widetilde f_2}
\newcommand{\dom}{\mathcal{D}}
\newcommand{\win}{w^{\mathrm{in}}}
\newcommand{\rk}{\mathsf{r}}
\newcommand{\wii}{w_{\mathrm{II}}}
\newcommand{\wi}{w_\mathrm{I}}
\newcommand{\ui}{u_\mathrm{I}}
\newcommand{\irm}{{\mathrm{I}}}
\newcommand{\iirm}{{\mathrm{II}}}
\newcommand{\ee}{\mathrm{e}}
\newcommand{\msym}{b}
\newcommand{\Msym}{B}
\numberwithin{equation}{section}
\newcommand{\weakstar}{\overset{\ast}{\rightharpoonup}}
\newcommand{\weakly}{\rightharpoonup}
\newcommand{\mdom}{\dom_0}
\newcommand{\normHs}{{\widehat H^s}} 
\newcommand{\tsf}{\mathsf{T}}
\newcommand{\LL}{L}
\newcommand{\PP}{\mathsf{P}}
\newcommand{\QQ}{\mathsf{Q}}
\newcommand{\OO}{\mathsf{O}}
\newcommand{\nn}{\nu}
\newcommand{\ggo}{g}
\newcommand{\eqsp}{\mathfrak{p}}
\newcommand{\widehata}{\widehat a}
\newcommand{\tast}{T_*}
\newcommand{\dd}{\mathrm{d}}
\newcommand{\rank}{\operatorname{rank}}
\author[]
{Pierre-\'Etienne Druet}
\address{Pierre-\'Etienne Druet, Weierstrass Institute for Applied Analysis and Stochastics (WIAS),
	Mohrenstrasse 39, 10117 Berlin, Germany}
	\email{pierreetienne.druet@wias-berlin.de}
\author[]
{Katharina Hopf}
\address{Katharina Hopf, Weierstrass Institute for Applied Analysis and Stochastics (WIAS),
	Mohrenstrasse 39, 10117 Berlin, Germany}
\email{katharina.hopf@wias-berlin.de}
\author[]{Ansgar J\"ungel}
\address{Institute of Analysis and Scientific Computing, Technische Universit\"at Wien,
	Wiedner Hauptstra\ss e 8--10, 1040 Wien, Austria}
\email{juengel@tuwien.ac.at}
\keywords{Hyperbolic--parabolic systems, cross diffusion, quasilinear second-order symmetric systems, initial-value problem, entropy structure.}
\subjclass[2020]{
	35M11, 
	35L45, 
	35K40, 
	35A09. 
	} 
\date{\today}
\title[]
{Hyperbolic--parabolic normal form and local classical solutions for 
cross-diffusion systems with incomplete diffusion}
\begin{document}
	
\begin{abstract}
We investigate degenerate cross-diffusion equations with a rank-deficient diffusion matrix that  are considered to model populations which move as to avoid spatial crowding and have recently been found to arise in a mean-field limit of interacting stochastic 
particle systems. To date, their analysis in multiple space dimensions has been confined to the 
purely convective case with equal mobility coefficients. 
In this article, we introduce a normal form for an entropic class of such equations which reveals their 
structure of a symmetric hyperbolic--parabolic system. Due to the state-dependence of 
the range and kernel of the singular diffusive matrix, our way of rewriting the 
equations is different from that classically used
for symmetric second-order systems with a nullspace invariance property. By means of 
this change of variables, we solve the Cauchy problem for short times and positive 
initial data in $H^s(\mathbb{T}^d)$ for $s>d/2+1$.
\end{abstract}

\maketitle

\section{Introduction}\label{sec:intro}
	
In the present paper, we are interested in the Cauchy problem for a class of cross-diffusion systems with a rank-deficient diffusion matrix and a porous medium-type degeneracy. For $n$ species with partial densities $u=(u_1,\dots,u_n)$ and partial velocity fields ${\bf v}_1, \ldots,{\bf v}_n$, we consider the conservation laws
\begin{align}\label{eq:sys.Darcy}
	&\partial_tu_i+\divv(u_i\textbf{v}_i)=0,\quad t>0,\ x\in\mathbb{T}^d,\ i=1,\dots,n .
\end{align}
Here, $\mathbb{T}^d$ denotes the periodic box and $n$, $d\in\mathbb{N}$ are arbitrary integers. From a simple phenomenological viewpoint, the type of diffusion system we are interested in arises when the motion is driven by density gradients according to
\begin{align}\label{eq:darcyunreflected}
	{\bf v}_i = -  \sum_{j=1}^n \msym_{ij}\nabla u_j ,
\end{align}
where $\msym_{ij}$ are phenomenological coefficients. 
For a derivation of such equations from a weakly interacting stochastic many-particle system, we refer to~\cite{CDJ_2019}.
	
In the special case where $\msym_{ij} = k\partial_{u_j}\widehat{p}$ for some $k>0$ and a suitable equation of state for the thermodynamic pressure
$\widehat p=\widehat{p}(u_1,\ldots,u_n)$, 
we obtain a purely convective motion with Darcy law ${\bf v}_i = {\bf v} =-k 
{\nabla}\widehat p$. 
Problems of this type have been studied in several works concerning the well-posedness of classical and weak solutions~\cite{DJ_2020,GPS_2019} as well as with regard to their segregation property, see for instance~\cite{BHIM_2012,Jacobs_preprint_2022}, where the model is enhanced by competition-type reactions arising in tissue growth modelling.
	
	A prototypical example motivating the present work is obtained for the choice 
\begin{align}\label{eq:sys.Darcy.p}
	& \msym_{ij} = k_i a_j  \quad \text{with} \quad  k_1,\dots,k_n>0, \ a_1,\dots,a_n > 0,
	\end{align}
leading to the equations
\begin{align}\label{eq:sys.Darcy-rang-1}\tag{\textsf{Rk1}}
	\partial_tu_i-\divv(k_i u_i 
	{ \nabla } \eqsp(u))=0 \quad \text{with} \quad \eqsp(u) = \sum_{i=1}^n a_i u_i .
\end{align}
This is the model considered by Bertsch et al.~\cite{BDalPM_2010,BGHP_1985} for $d=1$, $n=2$ and segregated solutions. It was first proposed in~\cite{GP_1984} to describe populations that disperse in order to avoid spatial crowding. 
Few further rigorous studies exist for this or similar models when the $k_i$ are not all equal. The study in~\cite{LLP_2017} on a two-species model illustrates numerically and through travelling wave analysis that unequal coefficients $k_1\not=k_2$ can lead to instabilities in the presence of a reaction term.
	
	There are several energy functionals $F(u) = \int_{\mathbb{T}^d} f(u) \,\dd x$  available for the simple model \eqref{eq:sys.Darcy-rang-1}, allowing to exhibit an underlying gradient-flow structure,
\begin{align}\label{thermo0}
	\partial_t u - \divv(\mathbb{M}(u) \nabla \mu) = 0 , 
\end{align}
where $\mu = \partial_{u} F(u)$ is the vector of chemical potentials
and $\mathbb{M}(u)$ denotes the mobility tensor. First, we note the so-called Shannon entropy~\cite{Shannon_1948} generated by the density
\begin{align*}
	f_1(u) := \sum_{i=1}^n \pi_i (u_i \log u_i - u_i )  
	\quad \text{with} 
	\quad \pi_i := \frac{a_i}{k_i} .
\end{align*}
In this case, the $i^{\rm th}$ chemical potential is $\mu_i = \pi_i\log u_i$, and \eqref{thermo0} is true for $\mathbb{M}(u) = (k u) \otimes (k u)$ with $k u = (k_1u_1, \ldots,k_n u_n)$. Every density function $f$ of the form $f(u) = (\vp\circ \eqsp)(u)$ with a strictly convex function $\vp\in C^2(0,\infty)$ 
	generates another Lyapunov functional for system~\eqref{eq:sys.Darcy-rang-1}. If we choose, for instance,
\begin{align}\label{FE2}
	f_2(u) = \frac{1}{2} \bigg(\sum_{i=1}^n a_i u_i\bigg)^2 , 
\end{align}
	then the chemical potential is given by $\mu_i = a_i \eqsp(u)$, yielding \eqref{thermo0} with $\mathbb{M}(u) = {\rm diag}(u_i/\pi_i)$. 
This shows that the constitutive equations underlying \eqref{eq:sys.Darcy}--\eqref{eq:darcyunreflected}, and in particular the rank deficiency of the 
diffusion matrix, may be subject to different interpretations:  strictly convex free energy and rank-deficient mobility tensor for~\eqref{thermo0}, or full-rank mobility tensor but rank-deficient Hessian for the free energy~\eqref{FE2}.
Interestingly, only the choice $f_3(u) = \eqsp(u)\log\eqsp(u) - \eqsp(u)$ of the free energy 
 yields the identity
$\eqsp = - f_3 + \sum_{i=1}^n u_i\partial_{u_i}f_3$, 
which can be interpreted as the Gibbs--Duhem equation 
for the thermodynamic pressure $\eqsp$.

In the general case of \eqref{eq:darcyunreflected}, we assume that the matrix 
\begin{equation}\label{eq:hp.B.gen} 
\begin{aligned}
	\Msym=(b_{ij})\in\mathbb{R}^{n\times n}\text{ }&\text{is 
		symmetric, positive semidefinite} \\
		&\text{and has rank }1 \leq \rk \leq n.
\end{aligned}
\end{equation}
More generally, our results hold for matrices $B$ for which there exists a 
vector $\widehat\pi\in\mathbb{R}_+^n$ such that the product 
$B\diag(\widehat\pi_1,\dots,\widehat\pi_n)$ 
obeys the properties in~\eqref{eq:hp.B.gen}. 
Indeed, this case can be reduced to~\eqref{eq:hp.B.gen} by the rescaling 
$u_i\mapsto u_i/\widehat\pi_i$. In particular, the asymmetric 
case~\eqref{eq:sys.Darcy.p} can be covered in this way.

Note that in many physical and biological situations, where the dynamics of the species is driven by pressure-like quantities or frictional interactions, it is reasonable to further impose the non-negativity of the coefficients $b_{ij}$.
However, our analysis shall reveal that positivity is a requirement only for 
$(b_{ij})$ as an operator. 

System~\eqref{eq:sys.Darcy}--\eqref{eq:darcyunreflected} with~\eqref{eq:hp.B.gen} admits the free energy functions
	\begin{align*}
		\feBS(u) = \sum_{i=1}^n (u_i \log u_i - u_i), \quad 
		\feR(u) = \frac{1}{2}\sum_{i,j=1}^n \msym_{ij} u_i u_j .
	\end{align*}
In the context of mathematical biology, the functional associated with the quadratic function $\feR$ is known as the Rao entropy~\cite{Rao_1982}. 
Rank-deficiency can for example be justified if the driving functional measures some dissimilarity between the species (genetic characters, $\feR(u) = \sum_{i,j=1}^n (u_i-u_j)^2$). Recently, in the context of electrochemistry, Eisenberg and Lin proposed a similar quadratic correction of the standard entropy to describe repulsive interactions between ions crowded in small channels; see \cite[eq.~(2.11)]{EL_2014} with $b_{ij} = \epsilon_{ij} (a_i+a_j)^{12}$,
the interaction energies $\epsilon_{ij} \geq 0$ and the diameter $a_i > 0$ of the $i^{\rm th}$ ion species. 
More generally, rank-deficiency for the Hessian of the free energy function is a singular limit, which can be motivated from general representation theorems for the free energy function (see equations~(30), (32) in~\cite{bothedreyerdruet}), when the behaviour of the system is widely dominated by its response under pressure, hence by the equation of state rather than by the statistical entropy of mixing.

\subsection*{Motivation and key aspects}

With the choice~\eqref{eq:sys.Darcy.p}, the second-order system~\eqref{eq:sys.Darcy}--\eqref{eq:darcyunreflected} does not enjoy full parabolicity, and even for $u_i$ strictly positive, the system is not parabolic in the sense of Petrovskii. Thus, the theories by Amann or Solonnikov do not apply in the present setting and we cannot expect the classical parabolic smoothing properties. Therefore, even the local-in-time well-posedness in spaces of high regularity is a non-trivial question, which has hardly been investigated in the literature for rank-deficient cross-diffusions.
The present article aims to provide an answer to this question for system~\eqref{eq:sys.Darcy}--\eqref{eq:darcyunreflected} under conditions~\eqref{eq:hp.B.gen}.
Our key idea is to separate the degenerate variables from the parabolic evolution in such a way that the resulting system has a (symmetrisable) hyperbolic--parabolic structure.
After identification of such a proper set of variables, we are able to obtain closed higher-order a priori estimates for short times by means of classical energy methods, which then allow for the construction of local-in-time strong solutions by iteration. 
 We emphasize that for general second-order quasilinear symmetric systems, the commutator terms arising upon higher-order spatial differentiation in the equation for $\partial^\alpha u_i$, $\alpha\in \mathbb{N}^d$,  
may not be controlled and standard parabolic estimates generally fail in the rank-deficient case.

For positive densities, the diffusion matrix associated with system~\eqref{eq:sys.Darcy}--\eqref{eq:sys.Darcy.p} (resp.\ \eqref{eq:sys.Darcy}, \eqref{eq:darcyunreflected} with~\eqref{eq:hp.B.gen}) has rank one (resp.\ rank $\rk$).
Thus, we are looking for a change of variables that transforms the system into a composite problem coupling a Friedrichs-symmetrisable hyperbolic system for $n{-}1$  (resp.\ for $n{-}\rk$)  components to a scalar parabolic equation (resp.\ to a parabolic system for $\rk$ components).
 We recall that Friedrichs-symmetrisable first-order systems allow for energy estimates in a similar way as scalar equations and that, as is well known, symmetrisability is a direct consequence of the existence of a strictly convex entropy~\cite{FL_1971,Godunov_1961}. While the entropic structure of the present models allows us to recast the equations in the form of a (degenerate) second-order symmetric system (see the introductory comments in Section~\ref{sec:extension}), it is not obvious why symmetrisability should be
  inherited by any first-order subsystem that might be obtained after changing coordinates.
  \smallskip
  
The main results of this article can be summarized as follows:
\begin{itemize}[leftmargin=\parindent]
\item We provide transformations leading to a normal form of symmetric 
hyperbolic--parabolic type for the rank-one systems~\eqref{eq:sys.Darcy-rang-1}.
\item We derive a transformation that allows us to write 
system~\eqref{eq:sys.Darcy},~\eqref{eq:darcyunreflected} with~\eqref{eq:hp.B.gen}
for an arbitrary rank $\rk\in\{1,\dots,n\}$ in a symmetric hyperbolic--parabolic form.
\item We show that such systems, for any rank $\rk\in\{1,\dots,n\}$, 
admit unique local-in-time classical solutions provided the initial densities lie in $H^s(\mathbb{T}^d)$ for some $s>d/2+1$ and are strictly positive.
\end{itemize}

\subsection*{Comparison to existing literature on hyperbolic--parabolic systems}

Normal forms of symmetric hyperbolic--parabolic type have first been introduced in the context of viscous compressible fluid dynamics.
In their seminal work~\cite{KS_1988}, Kawashima and Shizuta rely on entropic variables and a nullspace invariance condition of the matrix associated with the second-order spatial differential operator in order to derive an appropriate composite form. In our  class of systems, this invariance condition amounts to a state-independence of 
the kernel or range of the underlying mobility tensor $\mathbb{M}(u)$, a property which is not  fulfilled in the present cross-diffusive problem. 
Thus, the  results by Kawashima and Shizuta, including their extensions in~\cite{Giovangigli_1999,Serre_localEx_2010,Serre_structure_2010},
are not directly applicable. 

The normal forms that arise here (see equation~\eqref{eq:nf.B.i})
are structurally different from those appearing in the  fluid dynamics context.
 They also seem to be new with respect to more recent work
on hyperbolic--parabolic problems considered, for instance, in 
thermoviscoelasticity~\cite{CT_2018}.
A trivial version of our normal form is obtained for~\eqref{eq:sys.Darcy-rang-1} when setting $k_i=1$ for all $i$  (or equal to any other fixed constant).  
Then, if $n=2$, the change of variables $w_1=u_1/(u_1+u_2)$, $w_2=a_1u_1+a_2u_2$, as first employed in~\cite{BHIM_2012}, leads to the pure transport equation $\partial_tw_1=\nabla w_2\cdot\nabla w_1$
whose velocity field is given by the negative gradient of the solution to the 
porous-medium equation $\partial_tw_2=\divv(w_2\nabla w_2)$. 

From a technical point of view, the difficulties in the Cauchy problem associated with our class of normal forms are not very different from those arising in (viscous) systems of conservations laws. The symmetrisability of the hyperbolic subsystem allows us to adopt an $L^2$-approach, which was first applied by Kawashima to symmetric hyperbolic--parabolic systems, requiring initial data in $H^s$ for $s>d/2+2$~\cite{Kawashima_thesis_1984}.
This regularity condition was improved by Serre~\cite{Serre_localEx_2010}, who only required $s>d/2+1$ and obtained strong solutions.
With regard to the classical solutions constructed here, the condition $s>d/2+1$ in the $L^2$-framework should essentially be optimal, as it just yields the continuous differentiability of the hyperbolic components as well as the spatial Lipschitz continuity of the `velocity field'. 
We obtain classical solutions by exploiting a modest regularising effect in the quasilinear strongly parabolic subsystem, whose coefficient matrix has limited regularity due to its dependence on the hyperbolic components. It turns out that the regularity is sufficient to deduce classical solvability for small positive times.

Finally, we point out that specific situations may allow for improved results and alternative methods.
Besides the one-dimensional case $d=1$, the binary case $n =2$ and, more generally, the case $\rk = n-1$ are special, since then the hyperbolic part of the system is scalar and can thus be treated by the method of characteristics for quasilinear first-order equations. 
Boundary-value problems may also be treated more easily in this case. 

\subsection*{Outline} 

The remaining part of this article is structured as follows.  In Section~\ref{sec:main.results}, we formulate our main results on the rank-one system~\eqref{eq:sys.Darcy-rang-1} and the general problem~\eqref{eq:darcyunreflected}, \eqref{eq:hp.B.gen}. We provide in Section~\ref{sec:nf.explicit} an explicit transformation leading to a simple normal form for~\eqref{eq:sys.Darcy-rang-1}. Subsequently, we establish the existence of a unique local-in-time classical solution for systems of that form; see Section~\ref{sec:loc.ex}.
Section~\ref{sec:extension} is concerned with general rank-$\rk$ systems. Here, a different change of variables is used leading to a somewhat different, though structurally similar normal form. In Appendix~\ref{app:2nd.normal.form}, we briefly discuss possible alternative transformations in the rank-one case.

\subsection*{Notation}

For $s\in\mathbb{N}_0:=\mathbb{N}\cup\{0\}$, 
we equip the space $H^s:=H^s(\mathbb{T}^d)$
with the canonical norm $\|v\|_{H^s}=(\sum_{|\alpha|\le s}
\int_{\mathbb{T}^d}|\partial^\alpha v|^2\dd x)^{1/2}$.
We do not distinguish between the spaces $H^s(\mathbb{T}^d)$ and
$H^s(\mathbb{T}^d;\mathbb{R}^n)$ if no confusion arises.
For any vector $v=(v_1,\ldots,v_n)^\tsf\in\mathbb{R}^n$, 
we introduce $v'=(v_1,\ldots,v_{n-1})^\tsf$. Moreover, given $\rk\in\{1,\dots,n\}$, we write $v=(v_\irm,v_\iirm)^\tsf$, where $v_\irm:=(v_1,\dots,v_{n-\rk})^\tsf$ and $v_\iirm:=(v_{n-\rk+1},\dots,v_n)^\tsf$ with the understanding that $v=v_\iirm$ if $\rk=n$.
We denote by $\mathbb{R}^{N\times N}_{\rm sym}$ the space of symmetric
real $(N\times N)$-matrices and by
$\mathbb{R}^{N\times N}_{\rm spd}$ the space of symmetric positive definite real 
matrices. 
Finally, we set $\mathbb{R}_+=(0,\infty)$ and $\mathbb{R}_+^N:=(\mathbb{R}_+)^N$.

\section{Main results}\label{sec:main.results}

\subsection{Main results in the rank-one case}

We abbreviate $n'=n{-}1$ and let $\dom:=\mathbb{R}^{n'}\times \mathbb{R}_+$.
Given $\mathbb{Y}=(\mathbb{Y}_{i\ell})\in 
C^\infty(\dom;\mathbb{R}^{n'\times n'})$,
$Y_n=(Y_{ni})\in C^\infty(\dom;\mathbb{R}^{n'})$ and $a\in C^\infty(\dom;\mathbb{R}_+)$,  
 we consider for $w:=(w',w_n):=(w_1,\dots,w_{n-1},w_n)$ PDE systems of the form
\mathtoolsset{showonlyrefs=false}
\begin{subequations}\label{eq:sys.parab-hyperb}
	\begin{align}
		\label{eq:hyperbolic.sys}
		\partial_tw' &=\nabla w_n\cdot\mathbb{Y}(w)\nabla w' 
		+ Y_n(w)|\nabla w_n|^2, \\
    \label{eq:parab}
		\partial_tw_n&=\divv(a(w)\nabla w_n),
	\end{align}
\end{subequations}\mathtoolsset{showonlyrefs}%
where we abbreviated 
\begin{align}
	(\nabla w_n\cdot\mathbb{Y}(w)\nabla w')_i:=\sum_{\nn=1}^d\partial_{x_\nn}w_n\bigg(\sum_{\ell=1}^{n'}\mathbb{Y}_{i\ell}(w)\partial_{x_\nn} w_\ell\bigg),\quad i=1,\dots,n'.
\end{align}

\begin{definition}[Symmetrisable hyperbolic part]
	We say that system~\eqref{eq:sys.parab-hyperb}
	has a {\em symmetrisable hyperbolic part} if there exists a smooth mapping $\mathbb{A}_0\in  C^\infty(\dom;\mathbb{R}^{n'\times n'}_\mathrm{sym})$ with the following properties:
	\begin{enumerate}[label={\rm (\roman*)},leftmargin=1.8\parindent]
		\item $\mathbb{A}_0(u)\in\mathbb{R}^{n'\times n'}_{\rm spd}$ for all $u\in\dom$, or equivalently (given the smooth dependence of $\mathbb{A}_0$ on $u$),
		for each $\mathcal{K}\Subset\dom$ there exists $c_\mathcal{K}>0$ such that 
		\begin{align}
			\xi^\tsf\mathbb{A}_0(v)\xi\ge c_\mathcal{K}|\xi|^2\quad\text{for all }\xi\in\mathbb{R}^{n'}\text{ and } v\in \mathcal{K}.
		\end{align}
		\item $\mathbb{A}_0$ symmetrises~\eqref{eq:hyperbolic.sys} in the sense that
		\begin{align}
			\mathbb{A}_0(v)\mathbb{Y}(v)\in \mathbb{R}^{n'\times n'}_\mathrm{sym}
			\quad\text{for every $v\in \dom$.}
		\end{align}
	\end{enumerate}
\end{definition}

The above definition is motivated by the structural properties of system~\eqref{eq:sys.Darcy-rang-1} after a suitable change of variables and the fact that, under reasonable hypotheses,  quasilinear Friedrichs-symmetrisable hyperbolic systems admit local smooth solutions. 

Our first result asserts the existence of such a change of variables.

\begin{theorem}[Normal form]\label{thm:normal.form}
	Suppose that $k_i$, $a_i>0$ and $\max_{i\not=n}k_i<k_n$.
	Then there exists a smooth diffeomorphism $\Phi:\mathbb{R}_+^n\to \dom$, $u\mapsto w$, such that system~\eqref{eq:sys.Darcy-rang-1}
	 in the $w$-variables takes the form~\eqref{eq:sys.parab-hyperb} with a symmetrisable hyperbolic part.
	 The transformation $\Phi$ can be chosen in such a way that $w_n=\eqsp(u)$.
\end{theorem}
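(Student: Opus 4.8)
The plan is to construct $\Phi$ explicitly, guided by the gradient-flow/entropy structure described in the introduction, and then verify the symmetrisability property by hand. Since the theorem already tells us $w_n = \eqsp(u) = \sum_i a_i u_i$, the $n$-th equation of~\eqref{eq:sys.Darcy-rang-1} should close in $w_n$ alone modulo lower-order coupling: indeed $\partial_t w_n = \sum_i a_i \partial_t u_i = \sum_i a_i\, k_i\divv(u_i\nabla\eqsp(u)) = \divv\big((\sum_i a_i k_i u_i)\nabla w_n\big)$, so~\eqref{eq:parab} holds with $a(w) = \sum_i a_i k_i u_i$, provided this quantity can be written as a (smooth, positive) function of $w$. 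This forces the remaining $n' = n-1$ coordinates $w' = (w_1,\dots,w_{n-1})$ to be chosen so that $(w',w_n)$ is a diffeomorphism of $\mathbb{R}_+^n$ onto $\dom$; a natural choice, generalising the binary case $w_1 = u_1/(u_1+u_2)$ from~\cite{BHIM_2012}, is some flavour of normalised fractions such as $w_i = u_i / \sigma(u)$ or $w_i = u_i^{p_i}/(\text{something})$, with the exponents tuned to the $k_i$. The condition $\max_{i\neq n} k_i < k_n$ is precisely what should make the inverse map smooth and globally defined, and should also be what renders $\mathbb{A}_0$ positive definite.

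First I would fix the definition of $\Phi$ and check it is a smooth diffeomorphism $\mathbb{R}_+^n \to \dom = \mathbb{R}^{n'}\times\mathbb{R}_+$, computing the Jacobian and its inverse explicitly (this is where the hypothesis $k_i < k_n$ for $i\neq n$ enters to guarantee invertibility and the correct image). Second, I would substitute into~\eqref{eq:sys.Darcy-rang-1} and compute $\partial_t w'$ using the chain rule: each $\partial_t w_i$ is a linear combination of the $\partial_t u_j = k_j\divv(u_j\nabla\eqsp)$, and after expanding $\divv(u_j\nabla\eqsp) = \nabla u_j\cdot\nabla\eqsp + u_j\Delta\eqsp$ and re-expressing $\nabla u_j$, $\nabla\eqsp$, $\Delta\eqsp$ in terms of $\nabla w$, $\Delta w_n$, the second-order term $\Delta w_n$ should either cancel in the combination defining $w'$ or be absorbable — and crucially, the structure must collapse to the quasilinear form $\nabla w_n\cdot\mathbb{Y}(w)\nabla w' + Y_n(w)|\nabla w_n|^2$ with no $\nabla w'\otimes\nabla w'$ terms and no $\Delta w'$ terms. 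This is really a constraint on the choice of $w'$: the coordinates must be chosen so that, along the characteristics of $w_n$, the $w_i$ ($i\neq n$) are transported. Third, I would write down $\mathbb{Y}(w)$ explicitly from this computation, then exhibit $\mathbb{A}_0$. The natural candidate for $\mathbb{A}_0$ is the Hessian (in the $w'$ variables) of a strictly convex entropy restricted to level sets of $w_n$ — equivalently, pull back one of the entropies $f_1$ or $\feR$ from the introduction and restrict — or, more concretely, a diagonal matrix $\diag(c_i(w))$ with weights built from $a_i$, $k_i$, $u_i$; one then checks $\mathbb{A}_0(v)\mathbb{Y}(v)$ is symmetric by direct computation and positive definiteness of $\mathbb{A}_0$ from positivity of the $u_i$ and the ordering of the $k_i$.

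The main obstacle I expect is the second step: finding coordinates $w'$ for which the $w'$-evolution is genuinely a semilinear transport system in the sense of~\eqref{eq:hyperbolic.sys} — i.e.\ purely first order in $w'$, with all second-order derivatives appearing only through $w_n$ — and simultaneously admitting a symmetriser. A priori the transformed system will have terms $\nabla w'\cdot(\cdots)\nabla w'$ and $\Delta w'$; making these vanish is a nonlinear PDE constraint on the change of variables, and it is not obvious such coordinates exist without the special rank-one algebraic structure $\msym_{ij} = k_i a_j$ together with $k_i < k_n$. Once the right $w'$ is identified (I anticipate something like $w_i = \log u_i - (k_i/k_n)\log u_n$ up to normalisation, or a multiplicative analogue, exploiting that $\partial_t\log u_i = k_i(\divv(\nabla\eqsp) \cdot (\text{stuff}) )$ has a common factor pattern in $k_i$), the verification of the symmetriser should reduce to a short algebraic computation, and smoothness of $\Phi$ and $\Phi^{-1}$ is routine. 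I would also remark that the freedom in choosing $\varphi$ in the entropy $f = \varphi\circ\eqsp$ corresponds to the freedom in choosing $\mathbb{A}_0$, and that $w_n = \eqsp(u)$ is the canonical choice making~\eqref{eq:parab} the porous-medium-type equation.
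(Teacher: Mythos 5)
Your plan matches the paper's proof almost exactly: the paper takes $w_n=\eqsp(u)$ and $w_i=\log\bigl(u_i^{1/k_i}/u_n^{1/k_n}\bigr)$ for $i<n$ (a constant rescaling of your anticipated logarithmic coordinates $\log u_i-(k_i/k_n)\log u_n$, which would also work), verifies the diffeomorphism by computing the Jacobian determinant and solving a monotone scalar equation for $u_n$, derives the transport form~\eqref{eq:hyperbolic.sys} by the chain rule exactly as you describe, and then produces a diagonal symmetriser $\mathbb{A}_0=\diag\bigl(k_1u_1/(k_n-k_1),\dots,k_{n-1}u_{n-1}/(k_n-k_{n-1})\bigr)$. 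One small correction to your reasoning: the ordering hypothesis $\max_{i\neq n}k_i<k_n$ is \emph{not} needed for $\Phi$ to be a smooth diffeomorphism onto $\mathbb{R}^{n-1}\times\mathbb{R}_+$ (Lemma~\ref{l:diffeo} holds for arbitrary positive $k_i$); it is used exclusively to make the diagonal entries $k_iu_i/(k_n-k_i)$ of the symmetriser positive.
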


The theorem is proved in Section \ref{sec:nf.explicit}.
We note that the condition $\max_{i\not=n}k_i<k_n$ is not 
restrictive in the sense that it does not appear in 
Theorem~\ref{thm:locex.crossdiffusion} on the local existence and uniqueness 
for~\eqref{eq:sys.Darcy-rang-1}. 
The symmetriser and the normal form derived in the proof of Theorem~\ref{thm:normal.form} are explicit and take a relatively simple form; 
see Section~\ref{ssec:normal.form.rk1}. For example, if $n=2$,
the change of variables $u\mapsto w$ reads as
\begin{align}
	w_1=\log\bigg(\frac{u_1^{1/k_1}}{u_2^{1/k_2}}\bigg),\quad  w_2=\eqsp(u). 
\end{align}
Other transformations leading to somewhat different, albeit structurally similar normal forms are possible. For instance, one may alternatively consider
\begin{align}
	w_1=\frac{u_1^{1/k_1}}{u_1^{1/k_1}+u_2^{1/k_2}},\quad  w_2=\eqsp(u);
\end{align}
see Appendix~\ref{app:2nd.normal.form} for details.

Our second result asserts the local-in-time existence of classical solutions for symmetrisable systems of the form~\eqref{eq:sys.parab-hyperb} for data in the Sobolev space $H^s=H^s(\mathbb{T}^d)$, $s>d/2+1$.

\begin{theorem}[Local classical solutions for systems in normal form]
\label{thm:ex.for.symH}\sloppy
	 Suppose that system~\eqref{eq:sys.parab-hyperb} has a symmetrisable hyperbolic part.
	Let $s>d/2+1$ and $\win\in H^{s}$ with 
	$r:=\min_{\mathbb{T}^d}\win_n>0$.
	Then there exists a time $T=T(\|\win\|_{H^s},r)>0$ 
	and a unique classical solution $w=(w',w_n)\in  C([0,T];H^s)$ to 
	system~\eqref{eq:sys.parab-hyperb} in $(0,T)\times\mathbb{T}^d$ satisfying
	$w_{|t=0}=\win$, $\inf_{(0,T)\times\mathbb{T}^d}w_n\ge r$ and
\begin{align}
	&\partial_tw'\in  C([0,T];H^{s-1}),\quad w'\in C^1([0,T]\times\mathbb{T}^d),
	\\&\partial_tw_n,\, \nabla^2w_n\in  L^2(0,T; H^{s-1})
	\cap C_\loc((0,T]\times\mathbb{T}^d).
\end{align}
\end{theorem}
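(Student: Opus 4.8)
\emph{Proof strategy.} The plan is the one that is by now standard for symmetric hyperbolic--parabolic systems: decouple the two blocks in an iteration, establish \emph{uniform} higher-order energy bounds on a short time interval, pass to the limit, and recover the classical regularity of $w_n$ a~posteriori from parabolic estimates. Concretely, starting from $w^{(0)}\equiv\win$, I would define $w^{(m+1)}$ by first solving the linear, uniformly parabolic equation $\partial_tw_n^{(m+1)}=\divv(a(w^{(m)})\nabla w_n^{(m+1)})$ with the prescribed initial datum, and then the linear Friedrichs-symmetrisable hyperbolic system obtained from \eqref{eq:hyperbolic.sys} by freezing $\mathbb{Y},Y_n$ at $w^{(m)}$, taking the velocity field $\nabla w_n^{(m+1)}$ and the source $Y_n(w^{(m)})|\nabla w_n^{(m+1)}|^2$. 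The maximum principle for the scalar parabolic equation keeps $r\le w_n^{(m+1)}\le\max_{\mathbb{T}^d}\win_n$, so every iterate stays in a fixed compact set $\mathcal K\Subset\dom$ (using $H^s\embeds L^\infty$ for the $w'$-components once the bound of the next step is in force); hence $a(w^{(m)})$ is uniformly bounded below, and $a,\mathbb{A}_0,\mathbb{Y},Y_n$ together with their derivatives, evaluated along the iterates, are uniformly bounded. Existence of the iterates in the right classes ($w_n^{(m+1)}\in C([0,T];H^s)\cap L^2(0,T;H^{s+1})$ with $\partial_tw_n^{(m+1)}\in L^2(0,T;H^{s-1})$; $w'^{(m+1)}\in C([0,T];H^s)$ with $\partial_tw'^{(m+1)}\in C([0,T];H^{s-1})$) follows from linear theory, the hyperbolic part being handled by the same energy method as below applied to a further mollified problem.

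\textbf{Uniform estimates.} For the parabolic block, differentiating the equation up to order $s$, pairing with $\partial^\alpha w_n^{(m+1)}$, using Moser-type commutator and composition estimates and the uniform bounds on $\mathcal K$, one gets an $H^s$ energy inequality with a genuine dissipation term $c\,\|w_n^{(m+1)}\|_{H^{s+1}}^2$ and a Gronwall-admissible right-hand side of the form $C(\mathcal K)\big(1+\|w^{(m)}\|_{H^s}^2\big)\|w_n^{(m+1)}\|_{H^s}^2$. For the hyperbolic block, I would test $\partial^\alpha$ of \eqref{eq:hyperbolic.sys} (with frozen coefficients) against $\mathbb{A}_0(w^{(m)})\partial^\alpha w'^{(m+1)}$: the symmetry of $\mathbb{A}_0\mathbb{Y}$ annihilates the top-order transport term, while the remaining commutators and the quadratic source are controlled by a time weight $g_m\in L^1(0,T)$ built from $\|\nabla^2w_n^{(m+1)}\|_{L^\infty}$ and $\|w_n^{(m+1)}\|_{H^{s+1}}$ --- quantities which are bounded in $L^2(0,T)$ (not in $L^\infty$), but that is enough for Gronwall, and here $s>d/2+1$ enters through $H^{s-1}\embeds L^\infty$. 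Combining the two inequalities and running a continuation argument, one obtains $T=T(\|\win\|_{H^s},r)>0$ and $R$ depending only on $(\|\win\|_{H^s},r)$ such that $\sup_{[0,T]}\big(\|w'^{(m)}\|_{H^s}+\|w_n^{(m)}\|_{H^s}\big)\le R$, $\int_0^T\|w_n^{(m)}\|_{H^{s+1}}^2\,\dd t\le R^2$, and $w_n^{(m)}\ge r$, for all $m$; the same bounds control $\partial_tw'^{(m)}$ in $C([0,T];H^{s-1})$ and $\partial_tw_n^{(m)},\nabla^2w_n^{(m)}$ in $L^2(0,T;H^{s-1})$.

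\textbf{Convergence, uniqueness, regularity.} The differences $w^{(m+1)}-w^{(m)}$ solve linear problems of the same structure with sources that are $O\big(C(R)\,\|w^{(m)}-w^{(m-1)}\|_{L^2}\big)$ in $L^2$; a low-norm energy estimate (again using the symmetriser on the transport term and absorbing the $L^1(0,T)$-weight by Gronwall) gives a contraction in $C([0,T];L^2)$ after shrinking $T$. Thus $w^{(m)}\to w$ in $C([0,T];L^2)$, and interpolation with the uniform $H^s$ bound upgrades this to $C([0,T];H^{s'})$ for every $s'<s$, which suffices to pass to the limit in all nonlinearities; the limit has $w\in L^\infty(0,T;H^s)$, $w_n\in L^2(0,T;H^{s+1})$, $w_n\ge r$, and solves \eqref{eq:sys.parab-hyperb}. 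Strong time-continuity into $H^s$ is then recovered for $w_n$ from linear parabolic regularity with the fixed, limiting $H^s$-coefficient, and for $w'$ from the standard weak-continuity plus time-reversibility argument for symmetric hyperbolic systems; since $s>d/2+1$, $H^s\embeds C^1$ and $H^{s-1}\embeds C^0$, yielding $w'\in C^1([0,T]\times\mathbb{T}^d)$ and $\partial_tw'\in C([0,T];H^{s-1})$. Uniqueness is the same $L^2$ difference estimate for two solutions in $C([0,T];H^s)$. The bounds $\partial_tw_n,\nabla^2w_n\in L^2(0,T;H^{s-1})$ are immediate from $w_n\in L^2(0,T;H^{s+1})$ and $\partial_tw_n=\divv(a(w)\nabla w_n)$. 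For the interior classical regularity, rewrite the $w_n$-equation as $\partial_tw_n-a(w)\Delta w_n=\nabla(a(w))\cdot\nabla w_n$ with $a(w)$ uniformly positive; interpolating $w\in C([0,T];H^s)$ against the equation-provided bound on $\partial_tw$ in a weaker space shows that $w$, hence $a(w)$ and the right-hand side, are parabolically Hölder continuous, so interior parabolic Schauder estimates give $w_n\in C^{2+\beta,1+\beta/2}([t_0,T]\times\mathbb{T}^d)$ for every $t_0\in(0,T)$ --- this is the ``modest regularising effect'' --- whence $\partial_tw_n,\nabla^2w_n\in C_\loc((0,T]\times\mathbb{T}^d)$.

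\textbf{Main obstacle.} The crux is the $H^s$ hyperbolic energy estimate: the ``velocity field'' $\nabla w_n$ has only the regularity $L^2(0,T;H^s)$, one spatial derivative short of $w'$, and for a generic symmetric second-order quasilinear system the resulting top-order commutators cannot be controlled (as noted in the introduction); the estimate closes here only because the specific structure of \eqref{eq:hyperbolic.sys} makes the borderline top-order contributions carry $L^2(0,T)$ time weights supplied by the parabolic smoothing. The second, more technical, difficulty is extracting enough (parabolic) Hölder regularity from the coefficient $a(w)$ --- whose smoothness is capped by the hyperbolic components --- to legitimately invoke parabolic Schauder theory and obtain the claimed second-order continuity of $w_n$ away from $t=0$.
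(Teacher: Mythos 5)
Your overall strategy matches the paper's: decouple into linear blocks, establish uniform $H^s$ bounds on a short interval, pass to the limit, and bootstrap the extra interior regularity of $w_n$ via quasilinear parabolic Schauder theory. Two points differ, and one of them hides a real gap.

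The first difference is cosmetic: you iterate in a Gauss--Seidel fashion (solve the parabolic block at level $m+1$ first, then feed $\nabla w_n^{(m+1)}$ into the hyperbolic block), whereas the paper's scheme \eqref{eq:sys.vw} is fully Jacobi (both linear blocks at level $\ell$ take all coefficients from $w^{\ell-1}$). Both variants work; the paper's has the mild advantage that all coefficients of the linear problems at step $\ell$ belong to the already-constructed smooth iterate $w^{\ell-1}$.

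The genuine gap is in the convergence step. You start the iteration from $w^{(0)}\equiv\win\in H^s$ and claim a clean contraction in $C([0,T];L^2)$. But with $\win$ only in $H^s$, the iterates carry only the natural regularity $C([0,T];H^s)\cap L^2(0,T;H^{s+1})$ for $w_n$, so the $H^s$ energy identities (particularly the top-order hyperbolic estimate, whose validity you yourself identify as ``the crux'') and the existence/uniqueness of the linear iterates are not directly covered by classical theory with smooth coefficients; they need a regularisation that you only gesture at (``applied to a further mollified problem''). The paper resolves exactly this: it mollifies the initial datum to a level that \emph{increases with} $\ell$ ($z^\ell=\eta_\ell\ast\win$), which inductively keeps every $w^\ell$ in $C^\infty$ and makes the linear steps trivially well defined. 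The price is that each iterate now has a \emph{different} initial value, so the $L^2$-stability estimate is not a contraction; one gets instead the perturbed recursion $\mathsf N^{\ell+1}_{T_*}\le 2^{-\ell}p(\ell)+\tfrac12\mathsf N^\ell_{T_*}$, and to cover the whole interval $[0,T_*]$ the paper has to propagate this recursion across finitely many subintervals $[jt_*,(j+1)t_*]$, picking up a polynomially growing factor $p(\ell)\equiv C\ell^i$ along the way. Your proposal jumps over this complication: either you must explain how to justify the energy identities rigorously for non-smooth iterates, or you must adopt some mollification scheme and then handle the loss of the contraction property (or run a double limit: first in the iteration for a fixed smoothed datum, and then in the mollification parameter, together with an $L^2$-stability estimate to pass to $\win$). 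As stated, the argument between the uniform bounds and the existence of the limit is incomplete.
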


See Section~\ref{sec:loc.ex} for the proof of this theorem.
The local existence of classical solutions to our original cross-diffusion system is now essentially obtained as a corollary. 

\begin{theorem}[Local classical solutions for~\eqref{eq:sys.Darcy-rang-1}]
\label{thm:locex.crossdiffusion} Suppose that $s>d/2+1$ and let $k_i$, $a_i>0$ for $i=1,\dots,n$.
Then, for every initial condition $\uin\in H^s(\mathbb{T}^d;\dom_0)$, where 
$\dom_0\Subset\mathbb{R}_+^n$, there exists 
a time $T=T(\|\uin\|_{H^s},\dom_0)>0$
	such that system~\eqref{eq:sys.Darcy-rang-1} has a unique classical solution $u:[0,T]\times\mathbb{T}^d\to \mathbb{R}_+^n$ in the regularity class $u\in C([0,T];H^s)$, 
	\begin{align}
		\partial_tu,\, \nabla^2\eqsp(u)\in L^2(0,T;H^{s-1}(\mathbb{T}^d))\cap C_\loc((0,T]\times\mathbb{T}^d)
	\end{align}
	that satisfies $u_{|t=0}=\uin$. 
\end{theorem}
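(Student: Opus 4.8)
The plan is to deduce this result as a corollary of Theorems~\ref{thm:normal.form} and~\ref{thm:ex.for.symH}. The main issue is that Theorem~\ref{thm:normal.form} requires the structural condition $\max_{i\neq n}k_i < k_n$, whereas here the $k_i>0$ are arbitrary. First I would observe that this is only a matter of labelling: since all $k_i>0$, there is an index $m$ with $k_m = \max_i k_i$, and after permuting the species so that $m$ becomes the last index, the (relabelled) coefficients satisfy $\max_{i\neq n} k_i \le k_n$. The borderline case of equality requires a small separate remark — either one checks that the construction in Section~\ref{sec:nf.explicit} still goes through when $\max_{i\neq n}k_i = k_n$ (the most natural fix), or, if the strict inequality is genuinely needed, one perturbs to strict inequality by a preliminary linear change of the diffusivities and notes that~\eqref{eq:sys.Darcy-rang-1} is invariant under the rescaling $u_i \mapsto u_i/\widehat\pi_i$ discussed in the introduction; I expect the former. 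In any case, after this relabelling we have a smooth diffeomorphism $\Phi\colon \mathbb{R}_+^n\to\dom$, $u\mapsto w$, with $w_n = \eqsp(u)$, transforming~\eqref{eq:sys.Darcy-rang-1} into a system of the form~\eqref{eq:sys.parab-hyperb} with a symmetrisable hyperbolic part.

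Next I would transport the initial data: set $\win := \Phi(\uin)$. Since $\uin \in H^s(\mathbb{T}^d;\dom_0)$ with $\dom_0 \Subset \mathbb{R}_+^n$ and $\Phi$ is smooth, the composition $\Phi\circ\uin$ lies in $H^s$ by the usual Moser-type estimates for $s>d/2$ (Nemytskii operators preserve $H^s$ on a compact set of values), and $\win(\mathbb{T}^d) = \Phi(\uin(\mathbb{T}^d))$ is contained in the compact set $\Phi(\ol{\dom_0})\Subset\dom$; in particular $r := \min_{\mathbb{T}^d}\win_n = \min_{\mathbb{T}^d}\eqsp(\uin) > 0$ because $\eqsp(u) = \sum_i a_i u_i$ is strictly positive on $\mathbb{R}_+^n$ and $\dom_0$ is a compact subset of $\mathbb{R}_+^n$. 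Moreover $\|\win\|_{H^s}$ and $r$ are controlled by $\|\uin\|_{H^s}$ and $\dom_0$ alone. Applying Theorem~\ref{thm:ex.for.symH} yields $T = T(\|\win\|_{H^s},r) > 0$, hence $T = T(\|\uin\|_{H^s},\dom_0)$, and a unique classical solution $w\in C([0,T];H^s)$ of~\eqref{eq:sys.parab-hyperb} with $w_{|t=0}=\win$, $\inf w_n \ge r$, and the regularity $\partial_t w'\in C([0,T];H^{s-1})$, $w'\in C^1$, $\partial_t w_n, \nabla^2 w_n \in L^2(0,T;H^{s-1})\cap C_\loc((0,T]\times\mathbb{T}^d)$.

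Finally I would push the solution back: define $u := \Phi^{-1}(w)$. Since $\Phi^{-1}$ is smooth and $w(t,\cdot)$ stays in the fixed compact set $\Phi(\ol{\dom_0})$ (or at least in a compact subset of $\dom$ on $[0,T]$, after possibly shrinking $T$ so that $w$ does not leave a neighbourhood of the initial data), $u \in C([0,T];H^s)$ with values in $\mathbb{R}_+^n$, $u_{|t=0}=\uin$, and $u$ solves~\eqref{eq:sys.Darcy-rang-1} classically because $\Phi$ is a change of variables under which the two systems are equivalent (this equivalence is exactly the content of Theorem~\ref{thm:normal.form} and is established in Section~\ref{sec:nf.explicit}). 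For the regularity of $\partial_t u$ and $\nabla^2\eqsp(u)$: since $w_n = \eqsp(u)$ identically, $\nabla^2\eqsp(u) = \nabla^2 w_n \in L^2(0,T;H^{s-1})\cap C_\loc((0,T]\times\mathbb{T}^d)$ directly; and $\partial_t u = D\Phi^{-1}(w)\,\partial_t w$, where $\partial_t w = (\partial_t w', \partial_t w_n)$ with $\partial_t w'\in C([0,T];H^{s-1})$ and $\partial_t w_n\in L^2(0,T;H^{s-1})\cap C_\loc$, so $\partial_t u$ inherits the claimed class after multiplication by the smooth, uniformly bounded matrix $D\Phi^{-1}(w)$ (using again that products by $H^s$-functions, $s>d/2$, preserve $H^{s-1}$). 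Uniqueness for $u$ follows from uniqueness for $w$ together with the bijectivity of $\Phi$. The only genuinely delicate point is the treatment of the borderline case $\max_{i\neq n}k_i = k_n$ in the hypothesis of Theorem~\ref{thm:normal.form}; everything else is bookkeeping with the diffeomorphism $\Phi$ and standard composition estimates in Sobolev spaces.
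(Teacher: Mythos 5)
Your argument for the generic case (after relabelling so that $\max_{i\neq n}k_i<k_n$) is the same as the paper's: push the data through $\Phi$, apply Theorem~\ref{thm:ex.for.symH}, pull back with $\Phi^{-1}$ and transfer the regularity via Moser-type composition estimates; this part is fine. However, there is a genuine gap in your treatment of the borderline case where some $k_i$ equals $k_n$, which you yourself identify as the one delicate point.

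Neither of the fixes you suggest works. Fix~(a) fails because the symmetriser constructed in Section~\ref{sec:nf.explicit} is $\mathbb{A}_0=\diag(X_1,\dots,X_{n'})$ with $X_i=k_iu_i/(k_n-k_i)$; this blows up whenever $k_i=k_n$, so the construction does \emph{not} go through at the borderline, and the strict inequality in Theorem~\ref{thm:normal.form} is genuinely needed. Fix~(b) is also not viable: the rescaling $u_i\mapsto u_i/\widehat\pi_i$ only acts on the coefficients $a_i$ (equivalently multiplies $B$ on the right by a diagonal matrix), and cannot change the ratios of the $k_i$; a genuine perturbation of the $k_i$ changes the PDE itself and would not yield a solution of the original system. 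The paper's actual argument (Case~2 of its proof) is structurally different: if $k_m=\dots=k_n$ with $m$ minimal, then the species $u_m,\dots,u_n$ all share the common velocity $-k_n\nabla\eqsp(u)$, so one aggregates them into a single surrogate density $\widetilde u_m=u_m+\dots+u_n$; the reduced $m$-species system now has a strict maximal coefficient and can be handled by Case~1, producing in particular a velocity field $\widetilde{\mathbf v}$, after which each of $u_m,\dots,u_n$ is recovered as the solution of a \emph{linear} continuity equation with that given velocity field. This aggregation step is the missing idea in your proposal; without it, the equal-coefficient case (including the important limit $k_1=\dots=k_n$) is not covered by your argument.
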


We refer to Section~\ref{ssec:proof.original.variables} for the proof of 
this theorem.

\subsection{The general case of incomplete diffusion}

Given the above results, it is natural to ask whether a similar theory can be obtained for the rank-$\rk$ generalisations with $\rk\in\{1,\dots,n\}$, as introduced in Section~\ref{sec:intro}.
Thus, given a symmetric positive semidefinite matrix $B = (b_{ij})\in\mathbb{R}_{\rm spd}^{n\times n}$ such that $\rank B=\rk$, we aim to identify an appropriate normal form for the induced equations of motion,
\begin{align}\label{eq0}\tag{\text{$\mathsf{B}$}}
\partial_t u_i - \divv\bigg(u_i\nabla\sum_{j=1}^n b_{ij}u_j\bigg) = 0 , \quad i=1,\dots,n.
\end{align}
Our main results on this problem may be summarised as follows.

\begin{theorem}[Normal form]\label{thm:normalform.B}
	Let $\rk\in\{1,\dots,n\}$.
There exists a domain $\dom_1\subset\mathbb{R}^n$ and 
a smooth diffeomorphism $\Phi:\mathbb{R}_+^n\to \dom_1$, $u\mapsto w=(\wi,\wii)$, where $\wi=(w_1,\dots,w_{n-\rk})$ and $\wii=(w_{n-\rk+1},\dots,w_n)$,
such that system~\eqref{eq0} in the $w$-variables can be recast in the symmetric 
hyperbolic--parabolic form
\mathtoolsset{showonlyrefs=false}
\begin{subequations}\label{eq:nf.B.i}
\begin{align}\label{eq:nf.B.h.i}
	\mathbb{A}_0^\irm(w) \partial_t \wi &= \sum_{\nn=1}^d \mathbb{A}_1^\irm(w,  
	\partial_{x_\nn} \wii) \partial_{x_\nn}\wi + f^\irm(w,  \nabla \wii), \\
	\mathbb{A}_0^\iirm \partial_t \wii &= \divv\big(\mathbb{A}_1^\iirm(w)
	\nabla \wii \big), \label{eq:nf.B.p.i}
\end{align}
\end{subequations}\mathtoolsset{showonlyrefs}%
where 
$\mathbb{A}_0^\irm:\mathcal{D}_1 \to \mathbb{R}^{(n-\rk) \times (n-\rk)}_{\rm spd}$ 
and $\mathbb{A}_1^\iirm:\mathcal{D}_1 
\to \mathbb{R}^{\rk\times \rk}_{\rm spd}$ are 
smooth mappings and $\mathbb{A}_0^\iirm\in \mathbb{R}^{\rk\times \rk}_{\rm spd}$ is a 
constant diagonal matrix.  The smooth map $\mathbb{A}^\irm_1: \mathcal{D}_1 
\times \mathbb{R}^{\rk} \to \mathbb{R}^{(n-\rk) \times (n-\rk)}_{\rm sym}$ is linear 
in the second argument, and $f^\irm:\mathcal{D}_1 \times \mathbb{R}^{\rk} \to 
\mathbb{R}^{n-\rk}$ is smooth and quadratic in the second argument.
\end{theorem}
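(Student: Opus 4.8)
The plan is to diagonalise the symmetric positive semidefinite matrix $B$ and split the densities according to the kernel/range decomposition, then compensate for the state-dependence of this splitting by passing to logarithmic-type variables in the hyperbolic directions.

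\medskip

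\textbf{Step 1: Diagonalisation and the parabolic block.} Since $B\in\mathbb{R}^{n\times n}_{\rm sym}$ is positive semidefinite of rank $\rk$, write $B = O\diag(0,\dots,0,\lambda_1,\dots,\lambda_\rk)O^\tsf$ with $O$ orthogonal and $\lambda_1,\dots,\lambda_\rk>0$. Introduce the linear change of variables $q = O^\tsf u$, so that $\eqsp_m(u):=\sum_j b_{mj}u_j$ becomes a linear combination of the last $\rk$ components of $q$ only; more precisely, the ``pressure-like'' quantities $p_k := \lambda_k q_{n-\rk+k} = \sum_j (OD)_{\bullet k}\cdot(\text{rows of }O^\tsf)\,u$, $k=1,\dots,\rk$, depend linearly on $u$. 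Summing the equations \eqref{eq0} against the rows of $O^\tsf$ and using $\sum_i O^\tsf_{mi}u_i\nabla\eqsp_i(u) = $ (a combination of $\nabla p_k$'s) shows that the last $\rk$ combinations $\wii := (p_1,\dots,p_\rk)^\tsf$ (up to a diagonal rescaling to normalise $\mathbb{A}_0^\iirm$) satisfy a closed-looking quasilinear parabolic system $\mathbb{A}_0^\iirm\partial_t\wii = \divv(\mathbb{A}_1^\iirm(w)\nabla\wii)$, where $\mathbb{A}_1^\iirm$ is obtained from the coefficients $u_i(w)$ and is symmetric positive definite for $u$ in the positive cone because it is congruent to $\diag(u_i)$ via an invertible matrix built from $O$ and $D$. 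The crucial point here is that the right-hand side of each of these $\rk$ equations is a divergence of a linear combination of $\nabla\wii$ with coefficients that are smooth functions of $u$ (hence of $w$ once the full change of variables is fixed), so no $\nabla\wi$ enters the parabolic block — this is what makes \eqref{eq:nf.B.p.i} self-contained modulo the coupling through coefficients.

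\medskip

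\textbf{Step 2: The hyperbolic block and the choice of $\wi$.} For the remaining $n-\rk$ degrees of freedom, the naive choice $q_1,\dots,q_{n-\rk}$ does \emph{not} close up, because the velocity fields ${\bf v}_i = -\nabla\eqsp_i(u)$ are not constant along the kernel directions: the kernel of the Hessian is state-independent here (it is fixed by $B$), but the coefficients $u_i$ multiplying the fluxes are not. Following the rank-one strategy of Section~\ref{sec:nf.explicit}, I would instead take $\wi$ to be $n-\rk$ independent logarithmic combinations, e.g.\ ratios of powers of the $u_i$ chosen so that the nonlinearity becomes purely quadratic in $\nabla\wii$ with coefficients linear in $\partial_{x_\nn}\wii$. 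Concretely, each $w_\ell$ ($\ell\le n-\rk$) should be a function of $u$ annihilated by the ``pressure'' directions in a suitable sense, so that $\partial_t w_\ell = \nabla w_\ell\cdot(\text{matrix})\nabla w_\ell$-type terms plus $\nabla\wii$-driven transport; differentiating $w_\ell(u(t,x))$ in time and substituting \eqref{eq0} produces $\partial_t w_\ell = \sum_i (\partial_{u_i}w_\ell)\divv(u_i\nabla\eqsp_i(u))$, and after expanding the divergence and re-expressing $\nabla u_i$ through $\nabla\wi,\nabla\wii$ one collects the $\nabla\wi$-terms into $\sum_\nn \mathbb{A}_1^\irm(w,\partial_{x_\nn}\wii)\partial_{x_\nn}\wi$ and the $\nabla\wii\otimes\nabla\wii$-terms into $f^\irm(w,\nabla\wii)$. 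The second-derivative terms $\nabla^2\wii$ must cancel in the $\wi$-equations: this is the analogue of the cancellation exploited in the rank-one case and is forced by the fact that $\wi$ depends on $u$ only through combinations orthogonal (in the appropriate metric) to the range of $B$.

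\medskip

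\textbf{Step 3: Symmetrisability.} Finally one must produce $\mathbb{A}_0^\irm(w)\in\mathbb{R}^{(n-\rk)\times(n-\rk)}_{\rm spd}$ symmetrising the first-order operator $\sum_\nn\mathbb{A}_1^\irm(w,\partial_{x_\nn}\wii)\partial_{x_\nn}$, i.e.\ $\mathbb{A}_0^\irm\mathbb{A}_1^\irm$ symmetric for every value of the $\rk$ extra arguments. As emphasised in the introduction, this does \emph{not} follow automatically from the entropic structure of \eqref{eq0}, so it is the main obstacle. My expectation is that the correct symmetriser is the Hessian, in the $\wi$-variables, of (the relevant part of) the Shannon/Rao-type entropy $\feBS$ or $\feR$ restricted to a level set of $\wii$ — equivalently, the pullback of $\diag(1/u_i)$-type matrices under the transformation, possibly after the diagonal rescaling used to normalise $\mathbb{A}_0^\iirm$. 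One checks positive definiteness on the positive cone by congruence, and checks the symmetry of $\mathbb{A}_0^\irm\mathbb{A}_1^\irm(\cdot,\eta)$ for each fixed $\eta\in\mathbb{R}^\rk$ by a direct computation using the explicit form of the flux $u_i\nabla\eqsp_i(u)$ and the symmetry of $B$; the linearity of $\mathbb{A}_1^\irm$ in $\eta$ reduces this to $\rk$ ``building-block'' identities, one per pressure component, each of which should reduce to the symmetry $b_{ij}=b_{ji}$ together with the Gibbs–Duhem-type relation for $\eqsp$. Assembling $\dom_1:=\Phi(\mathbb{R}_+^n)$ and verifying that $\Phi$ is a smooth diffeomorphism (the Jacobian is triangular with respect to the $\wi/\wii$ splitting and nonsingular on the positive cone) then completes the proof.
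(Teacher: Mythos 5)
Your strategy matches the paper's: diagonalise $B$, take the range components of $u$ as parabolic variables, take logarithmic kernel combinations as hyperbolic variables, and symmetrise with an entropy-type Hessian. Indeed, your guess for the symmetriser is correct — the Hessian of the Shannon entropy restricted to a level set of $\wii$, expressed in $\wi$-coordinates, is exactly
$\mathbb{A}_0^\irm(w)=\big(\QQ D(\Psi(w))^{-1}\QQ^\tsf\big)^{-1}$,
which is the matrix the paper uses. The explicit hyperbolic variable you leave unspecified is $w_k=\xi^k\cdot\log u$ for $k\in\irm$, with $\xi^k$ spanning $\ker B$, which is the natural generalisation of the rank-one ansatz once you write the range indices as linear combinations.

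The genuine gap is in Step 3. You assert the symmetry of $\mathbb{A}_0^\irm\mathbb{A}_1^\irm(\cdot,\eta)$ but give no verification, and the mechanism you anticipate is not the one that works. There is no Gibbs--Duhem-type identity at play: the paper's symmetry computation hinges on implicitly differentiating the nonlinear equation $F(X;w)=0$ that recovers $u$ from $w$ (with $\partial_X F=\PP D(u)\PP^\tsf$), which yields
$\partial_{w_m}X(w)=-(\partial_XF)^{-1}\PP D(u)\xi^m$ for $m\in\irm$. Feeding this into the transport coefficient $Z_{km}$ and simplifying with the identities $\QQ\QQ^\tsf=\mathbb{I}$, $\PP\QQ^\tsf=0$ produces the manifestly symmetric form
$\big(\mathbb{A}_1^\irm\big)_{\ell m}=\sum_i\big(e^i\cdot\Sigma(u)\xi^\ell\big)\big(e^i\cdot\Sigma(u)\xi^m\big)\,\partial_{x_\nn}\mu_i/u_i$,
with $\Sigma(u)=\QQ^\tsf(\QQ D(u)^{-1}\QQ^\tsf)^{-1}\QQ$. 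Without this computation — or an equivalent argument — the central assertion of the theorem, symmetrisability of the hyperbolic block, remains unproved. Likewise, the diffeomorphism property of $\Phi$ is not as simple as you suggest: the Jacobian in the $\wi/\wii$ splitting is not triangular, and the paper actually proves invertibility by showing that $w\mapsto u$ amounts to minimising a strictly convex coercive potential $G(X;w)$ in the $\rk$ implicit parameters $X$. These are the two steps that carry the mathematical weight and that your outline does not supply.
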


The precise form of the coefficient matrices appearing in Theorem~\ref{thm:normalform.B} is described in Proposition~\ref{prop:normalform.B}. The transformation $\Phi$ is built upon an orthonormal basis of constant eigenvectors associated with $B$ and slightly differs from that in Theorem~\ref{thm:normal.form}.
We emphasise that normal forms are generally not unique, and in different applications different choices might be preferable.

The local existence of a unique classical solution to the symmetric hyperbolic--parabolic systems of the form~\eqref{eq:nf.B.i} will be established in Section~\ref{ssec:classial.gen}, cf.\ Theorem~\ref{thm:ex.classical.nf.gen}. 
 As a consequence of the analysis in Section~\ref{ssec:classial.gen}, we obtain the local existence of a unique positive classical  solution to the cross-diffusion system~\eqref{eq0} for smooth positive initial data.

\begin{theorem}[Local classical solutions in original variables]\label{thm:cross.gen}
Let  $B\in\mathbb{R}^{n\times n}$ fulfil the conditions~\eqref{eq:hp.B.gen}
and let $s>d/2+1$. 
Then, for every initial condition $\uin\in H^s(\mathbb{T}^d;\dom_0)$, 
where $\dom_0\Subset\mathbb{R}_+^n$, there exists 
a time $T=T(\|\uin\|_{H^s},\dom_0)>0$
such that system~\eqref{eq0} has a unique classical solution 
$u:[0,T]\times\mathbb{T}^d\to \mathbb{R}_+^n$ in the regularity class 
$u\in C([0,T];H^s)$, 
\begin{align}
	\partial_tu,  \nabla^2Bu\in L^2(0,T;H^{s-1}(\mathbb{T}^d))\cap 
	C_\loc((0,T]\times\mathbb{T}^d)
\end{align}
that satisfies $u_{|t=0}=\uin$. 
\end{theorem}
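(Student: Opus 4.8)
\textbf{Proof plan for Theorem~\ref{thm:cross.gen}.}
The strategy is to reduce the original cross-diffusion system~\eqref{eq0} to the symmetric hyperbolic--parabolic normal form~\eqref{eq:nf.B.i} via the diffeomorphism $\Phi$ of Theorem~\ref{thm:normalform.B}, apply the local well-posedness result for the normal form (Theorem~\ref{thm:ex.classical.nf.gen}, established in Section~\ref{ssec:classial.gen}), and then transport the solution back through $\Phi^{-1}$. First I would handle the possible asymmetry of $B$: by the remark following~\eqref{eq:hp.B.gen}, the rescaling $u_i\mapsto u_i/\widehat\pi_i$ reduces a general $B$ satisfying~\eqref{eq:hp.B.gen} to the symmetric positive semidefinite case with the same rank~$\rk$, so it suffices to treat symmetric $B$ and then undo a fixed linear change at the end. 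Next, given $\uin\in H^s(\mathbb{T}^d;\dom_0)$ with $\dom_0\Subset\mathbb{R}_+^n$, set $\win:=\Phi\circ\uin$. Since $\Phi$ is a smooth diffeomorphism of $\mathbb{R}_+^n$ onto $\dom_1$ and $\dom_0$ is compactly contained in $\mathbb{R}_+^n$, the image $\Phi(\dom_0)$ is compactly contained in $\dom_1$; because $s>d/2+1$ implies $H^s(\mathbb{T}^d)\hookrightarrow C^1(\mathbb{T}^d)$ is an algebra under composition with smooth maps on compact sets, we get $\win\in H^s(\mathbb{T}^d;\dom_1)$ with $\|\win\|_{H^s}$ bounded by a constant depending only on $\dom_0$ and $\|\uin\|_{H^s}$ (and similarly $\Phi(\dom_0)\Subset\dom_1$ controls the relevant parabolicity constant $r$ of the $w_n$-type component).

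Now apply Theorem~\ref{thm:ex.classical.nf.gen} to obtain a time $T=T(\|\win\|_{H^s},r)>0$ and a unique classical solution $w=(\wi,\wii)\in C([0,T];H^s)$ of~\eqref{eq:nf.B.i} with $w|_{t=0}=\win$, staying in a compact subset of $\dom_1$, and with the regularity $\partial_t\wi\in C([0,T];H^{s-1})$, $\wi\in C^1([0,T]\times\mathbb{T}^d)$, $\partial_t\wii,\nabla^2\wii\in L^2(0,T;H^{s-1})\cap C_\loc((0,T]\times\mathbb{T}^d)$. Define $u:=\Phi^{-1}\circ w$. Since $\Phi^{-1}$ is smooth on $\dom_1$ and $w$ takes values in a fixed compact subset $\mathcal{K}\Subset\dom_1$, the Moser-type product and composition estimates in $H^s$ give $u\in C([0,T];H^s(\mathbb{T}^d;\mathbb{R}_+^n))$; applying $D\Phi^{-1}(w)$ to the evolution equations for $w$ and using that $\Phi$ was constructed precisely so that~\eqref{eq0} transforms into~\eqref{eq:nf.B.i}, one recovers that $u$ solves~\eqref{eq0} classically on $(0,T)\times\mathbb{T}^d$ with $u|_{t=0}=\uin$. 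The regularity statement for $\partial_t u$ and $\nabla^2 Bu$ follows likewise: $\partial_t u = D\Phi^{-1}(w)\partial_t w$ lies in $L^2(0,T;H^{s-1})\cap C_\loc((0,T]\times\mathbb{T}^d)$ because each of $\partial_t\wi$ and $\partial_t\wii$ does (the former is even in $C([0,T];H^{s-1})$), and since $Bu$ is an affine function of the parabolic variables $\wii$ (by the construction of $\Phi$ via an eigenbasis of $B$, the combination $Bu$ depends only on $\wii$ up to a smooth change), $\nabla^2 Bu$ inherits the $L^2(0,T;H^{s-1})\cap C_\loc((0,T]\times\mathbb{T}^d)$ regularity of $\nabla^2\wii$.

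Uniqueness transfers immediately: if $u_1,u_2$ were two classical solutions of~\eqref{eq0} in the stated class with the same initial data, then $w_j:=\Phi\circ u_j$ are two solutions of~\eqref{eq:nf.B.i} in the regularity class to which the uniqueness part of Theorem~\ref{thm:ex.classical.nf.gen} applies, hence $w_1=w_2$ and therefore $u_1=u_2$ on a common time interval; a standard continuation argument then gives agreement on all of $[0,T]$. The main obstacle in this argument is not the present theorem itself but ensuring that the composition/transport steps are genuinely routine, i.e.\ that Theorem~\ref{thm:ex.classical.nf.gen} is formulated with exactly the regularity class needed so that both $\Phi$ and $\Phi^{-1}$ act boundedly on it and so that the notion of classical solution is preserved; the only real point requiring care is checking that the positivity/parabolicity lower bound $r>0$ for the $\wii$-component (needed to invoke Theorem~\ref{thm:ex.classical.nf.gen}) is guaranteed by $\dom_0\Subset\mathbb{R}_+^n$ together with the explicit form of $\Phi$, and that the time $T$ produced depends on $\uin$ only through $\|\uin\|_{H^s}$ and $\dom_0$, which holds because $\win$ and $r$ are so controlled.
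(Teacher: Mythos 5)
Your proposal follows essentially the same route as the paper: apply the normal-form equivalence (Proposition~\ref{prop:normalform.B} via the diffeomorphism of Theorem~\ref{thm:normalform.B}), invoke Theorem~\ref{thm:ex.classical.nf.gen} for local well-posedness of the hyperbolic--parabolic normal form, and transport back through $\Phi^{-1}$, with uniqueness and the regularity of $Bu$ transferred in the same way; the paper merely makes your observation that $Bu$ depends only on $\wii$ explicit via the identity $Bu = \OO^\tsf(0, D^{\iirm}(\lambda)\wii)^\tsf$. The only superfluous step is your preliminary reduction of an asymmetric $B$, which is not needed since~\eqref{eq:hp.B.gen} already imposes symmetry.
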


\begin{remark}
By a rescaling, we see that our results apply more generally to systems
\begin{align*}
	\partial_t u_i - \divv\big(u_i\nabla p_i(u)\big) = 0,\quad p_i(u)=\sum_{j=1}^n a_{ij}u_j,
\end{align*}
for which the matrix $A=(a_{ij})\in\mathbb{R}^{n\times n}$ is such that for some 
$\widehat\pi\in \mathbb{R}_+^n$ the product $B:=A\operatorname{diag}(\widehat\pi_1,\ldots,\widehat\pi_n)$ satisfies the properties in~\eqref{eq:hp.B.gen}.
\end{remark}

\section{An explicit normal form for rank-one systems}\label{sec:nf.explicit}

In this section, we provide the proof of Theorem~\ref{thm:normal.form}.
Observe that if a diffeomorphism $\Phi$ has the required properties in the case $a_i=1$ for $i=1,\ldots,n$ in~\eqref{eq:sys.Darcy.p}, then the
transformation $\Phi\circ\Gamma$ with $\Gamma_i(u)=a_iu_i$ enjoys the properties needed in the general case $a_i>0$ for all $i$.
We therefore assume that $a_i=1$ for $i=1,\ldots,n$ in~\eqref{eq:sys.Darcy.p}.  

\subsection{Change of variables}

Between the domains $\mathcal{\widehat D}:=\mathbb{R}_+^n$ and
$\dom:=\mathbb{R}^{n'}\times \mathbb{R}_+$, where $n':=n-1$,
we consider the transformation $\Phi:\mathcal{\widehat D}\to\mathcal{D}$,
\begin{align}\label{eq:transf.explicit}
	\Phi(u)=
	\begin{pmatrix}
	\log\big(u_1^{1/k_1}\big/u_n^{1/k_n}\big) \\
	\vdots \\
	\log\big(u_{n'}^{1/k_{n'}}\big/u_n^{1/k_n}\big) \\[3pt]
	\sum_{j=1}^nu_j
	\end{pmatrix}\quad\mbox{for }u\in\widehat\dom.
\end{align}
Its Jacobian
\begin{equation}\label{eq:Jacobian}
	\mathrm{D}\Phi(u) 
	= \begin{pmatrix}
		 \frac{1}{k_1u_1} & 0 & \dots & 0 & -\frac{1}{k_nu_n} \\ 
		0 & \frac{1}{k_2u_2} & 0 & \dots & -\frac{1}{k_nu_n} \\
		\vdots & & \ddots & & \vdots \\ 
		0 & \dots & 0 & \frac{1}{k_{n'}u_{n'}} & -\frac{1}{k_nu_n} \\
	  1 & 1 & 1 & \dots & 1
  \end{pmatrix}
\end{equation}
has a non-vanishing determinant,
\begin{equation}\label{eq:detPhi}
	\det \mathrm{D}\Phi(u) = \sum_{\ell=1}^n\prod_{i\not=\ell}\frac{1}{k_iu_i} > 0.
\end{equation}

\begin{lemma}\label{l:diffeo}
	The map $\Phi:\widehat \dom\to \dom$ is a $C^\infty$-diffeomorphism.
\end{lemma}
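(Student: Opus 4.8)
The plan is to show that $\Phi:\widehat\dom\to\dom$ is a bijection with smooth inverse. Smoothness of $\Phi$ itself is clear from the explicit formula \eqref{eq:transf.explicit}, since each component is a composition of logarithms, powers and sums of the strictly positive coordinates $u_i$. By \eqref{eq:detPhi} the Jacobian $\mathrm{D}\Phi(u)$ is invertible at every point of $\widehat\dom$, so by the inverse function theorem $\Phi$ is a local $C^\infty$-diffeomorphism; it therefore suffices to prove that $\Phi$ is a bijection from $\widehat\dom$ onto $\dom$, for then the local smooth inverses patch together to a global smooth inverse.

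To prove bijectivity I would construct the inverse explicitly. Write $w=\Phi(u)$, so that $w_i=\log(u_i^{1/k_i}/u_n^{1/k_n})$ for $i=1,\dots,n'$ and $w_n=\sum_{j=1}^n u_j$. From the first $n'$ equations, $u_i^{1/k_i}=u_n^{1/k_n}\ee^{w_i}$, i.e.\ $u_i=u_n^{k_i/k_n}\ee^{k_i w_i}$ for $i=1,\dots,n'$ (and trivially $u_n=u_n^{k_n/k_n}\ee^{k_n\cdot 0}$ if we set $w_n':=0$, but it is cleaner to keep $u_n$ as the unknown). Substituting into $w_n=\sum_j u_j$ gives the scalar equation
\begin{align}\label{eq:inv.scalar}
	w_n = u_n + \sum_{i=1}^{n'} u_n^{k_i/k_n}\ee^{k_i w_i} =: G(u_n;w'),
\end{align}
where $G(\,\cdot\,;w'):(0,\infty)\to(0,\infty)$ is, for each fixed $w'\in\mathbb{R}^{n'}$, continuous, strictly increasing (each summand is strictly increasing in $u_n$ since $k_i/k_n>0$), and satisfies $G(u_n;w')\to 0$ as $u_n\to 0^+$ and $G(u_n;w')\to\infty$ as $u_n\to\infty$. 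Hence for every $(w',w_n)\in\dom=\mathbb{R}^{n'}\times\mathbb{R}_+$ there is a unique $u_n=u_n(w',w_n)>0$ solving \eqref{eq:inv.scalar}, and then $u_i:=u_n^{k_i/k_n}\ee^{k_i w_i}>0$ for $i<n$. This produces a well-defined two-sided inverse $\Psi:\dom\to\widehat\dom$, so $\Phi$ is a bijection.

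Combining the two observations finishes the proof: $\Phi$ is a smooth bijection whose differential is everywhere invertible, so $\Phi^{-1}=\Psi$ is smooth as well (alternatively, one may note that smoothness of $u_n$ in $(w',w_n)$ follows from the implicit function theorem applied to $(u_n,w',w_n)\mapsto G(u_n;w')-w_n$, whose derivative in $u_n$ is $G_{u_n}>0$, and then $u_i=u_n^{k_i/k_n}\ee^{k_iw_i}$ is manifestly smooth). I expect the only genuinely substantive point to be the solvability and uniqueness in \eqref{eq:inv.scalar}, which rests on the strict monotonicity and the limiting behaviour of $G$ in $u_n$; everything else is either the explicit formula \eqref{eq:transf.explicit}, the determinant computation \eqref{eq:detPhi}, or a routine invocation of the inverse/implicit function theorem.
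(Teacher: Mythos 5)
Your proof is correct and follows essentially the same route as the paper: both deduce local $C^\infty$-invertibility from the non-vanishing Jacobian determinant~\eqref{eq:detPhi}, and then establish global bijectivity by reducing $\Phi(u)=w$ to the scalar equation $u_n+\sum_{i=1}^{n'}\ee^{k_iw_i}u_n^{k_i/k_n}=w_n$ and using strict monotonicity together with the limits at $0^+$ and $+\infty$. Your function $G(\,\cdot\,;w')$ is precisely the paper's $g_{w'}$, so the two arguments coincide.
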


\begin{proof}
By definition, $\Phi\in C^\infty$, and by the implicit function theorem
and \eqref{eq:detPhi}, 
the map $\Phi$ is locally invertible with smooth inverse. 
To show that $\Phi$ is a bijection from $\widehat\dom$ to $\dom$, 
we observe that the equation
	$\Phi(u)=w$ for $u\in\widehat\dom$ and $w\in\dom$ is equivalent to 
	\begin{align}\label{eq:diff.aux}
		u_j = \exp(k_jw_j){u_n}^{k_j/k_n},\ j=1,\dots,n',\quad\text{and}\quad
		\sum_{j=1}^n u_j=w_n.
	\end{align}
	Inserting the first relation into the second one, we find that
	\begin{align}\label{eq:diff.aux.1}
		g_{w'}(u_n) := u_n+\sum_{j=1}^{n'}\exp(k_jw_j){u_n}^{k_j/k_n} = w_n.
	\end{align}
	For fixed $w=(w',w_n)\in\dom$, the function $g_{w'}:\mathbb{R}_+\to\mathbb{R}_+$ is strictly increasing and continuous with 
	$\lim_{s\downarrow0}g_{w'}(s)=0$ and $\lim_{s\uparrow+\infty}g_{w'}(s)=+\infty$. Hence, there exists a unique solution $u_n\in\mathbb{R}_+$ of the equation $g_{w'}(u_n)=w_n$, which in turn uniquely determines $u_j\in\mathbb{R}_+$, $j=1,\dots,n'$, by means of the first condition in~\eqref{eq:diff.aux}.
\end{proof}

\subsection{PDE system in new variables}\label{ssec:normal.form.rk1}

We now formulate our original equation 
in the new variables $w=\Phi(u)$. We recall that the original PDE system takes the form
\begin{align}\label{eq:u.sys.simp}
\partial_tu_i=\divv(k_iu_i\nabla w_n),\quad i=1,\dots,n,
\end{align} 
 and let $\Psi(w)=u$ denote the inverse of $\Phi$. 
Summing up the $n$ equations~\eqref{eq:u.sys.simp} leads to
\begin{align}
	\partial_tw_n= \divv(a(w)\nabla w_n),
\end{align}
where $a(w):= \widehat{a}(\Psi(w))$ with 
$\widehat a(u)=\sum_{i=1}^nk_iu_i$. For $u=\Psi(w)$, we have
$\widehat a(u)\ge (\min_{i}k_i)\sum_{i=1}^n u_i = (\min_i k_i)w_n$.
Since $\Psi\in C^\infty(\dom;\widehat\dom)$ and $\inf_{w\in \mathcal{K}}w_n>0$ for every $\mathcal{K}\Subset\dom$, we find that $a\in C^\infty(\dom;\mathbb{R}_+)$.

Next, let $i=1,\dots,n'$. We deduce from \eqref{eq:Jacobian} 
and \eqref{eq:u.sys.simp} that
\begin{align*}
  \partial_t w_i &= \sum_{\ell=1}^n\partial_{u_\ell}\Phi_i(u)\partial_t u_\ell
	= \frac{\partial_t u_i}{k_iu_i}-\frac{\partial_t u_n}{k_nu_n}
	= \frac{1}{u_i}\divv(u_i\nabla w_n) - \frac{1}{u_n}\divv(u_n\nabla w_n) \\
	&= \nabla(\log\Psi_i-\log\Psi_n)\cdot\nabla w_n
	= \sum_{\ell=1}^n\partial_{w_\ell}(\log\Psi_i-\log\Psi_n)\nabla w_\ell\cdot\nabla w_n.
\end{align*}
To determine the term $\partial_{w_\ell}(\log\Psi_i-\log\Psi_n)$,
we use relation~\eqref{eq:diff.aux.1},
 $$
  \partial_{w_n}u_n = \partial_{w_n}\bigg(w_n - \sum_{j=1}^{n'}\exp(k_jw_j)
	u_n^{k_j/k_n}\bigg) = 1 - \sum_{j=1}^{n'}\frac{k_ju_j}{k_nu_n}\partial_{w_n}u_n,
$$
and solve this identity for $\partial_{w_n}u_n$:
$$
  (\partial_{w_n}\Psi_n)(w)
	= \partial_{w_n}u_n = \bigg(1 + \sum_{j=1}^{n'}\frac{k_ju_j}{k_nu_n}\bigg)^{-1}
	= \frac{k_nu_n}{\widehat a(u)}.
$$
In a similar way, it follows for $\ell=1,\ldots,n'$ that
$$
  \partial_{w_\ell}u_n = -k_\ell u_\ell - \sum_{j=1}^{n'}\frac{k_ju_j}{k_nu_n}
	\partial_{w_\ell}u_n
$$
and consequently, after solving for $\partial_{w_\ell}u_n$,
$$
  (\partial_{w_\ell}\Psi_n)(w) = \partial_{w_\ell}u_n
	= -k_\ell u_\ell\bigg(1+\sum_{j=1}^n\frac{k_ju_j}{k_nu_n}\bigg)^{-1}
	= -\frac{k_nu_n}{\widehat a(u)}k_\ell u_\ell.
$$
Hence, 
\begin{equation*}
	(\partial_{w_n}\log\Psi_n)(\Phi(u)) = \frac{k_n}{\widehat a(u)}, \quad
	(\partial_{w_\ell}\log\Psi_n)(\Phi(u)) = -\frac{k_n}{\widehat a(u)}k_\ell u_\ell
	\text{ for }\ell=1,\dots,n'.
\end{equation*}

Next, we differentiate
$w_i=\log(u_i^{1/k_i}/u_n^{1/k_n}) 
= (1/k_i)\log\Psi_i-(1/k_n)\log\Psi_n$, $i=1,\dots,n'$, with respect to $w_\ell$, 
$\ell=1,\dots,n'$, which gives $\delta_{i\ell} = (1/k_i)\partial_{w_\ell}\log\Psi_i
- (1/k_n)\partial_{w_\ell}\log\Psi_n$. This shows that
\begin{align}
	\partial_{w_\ell}\log\Psi_i
	-\partial_{w_\ell}\log\Psi_n
	= k_i\delta_{i\ell}+\bigg(\frac{k_i}{k_n}-1\bigg)\partial_{w_\ell}\log\Psi_n 
	= k_i\delta_{i\ell}+\big(k_n-k_i\big)\frac{k_\ell u_\ell}{\widehata(u)},
\end{align}
Similarly, differentiating $w_i=(1/k_i)\log\Psi_i-(1/k_n)\log\Psi_n$,
$i=1,\dots,n'$, with respect to $w_n$, we obtain
$0=(1/k_i)\partial_{w_n}\log\Psi_i-(1/k_n)\partial_{w_n}\log\Psi_n$
or, equivalently, $\partial_{w_n}\log\Psi_i=k_i/\widehat a(u)$. Thus,
\begin{align}
		\partial_{w_n}\log\Psi_i-\partial_{w_n}\log\Psi_n=\frac{k_i-k_n}{\widehata(u)}.
\end{align}

In summary, the components $w'=(w_1,\dots,w_{n'})$ satisfy the system
\begin{align}\label{eq:sys.w'}
	\partial_tw' = \sum_{\nn=1}^d\partial_{x_\nn}w_n\mathbb{Y}(w)\partial_{x_\nn}w'+
	Y_n(w)|\nabla w_n|^2,
\end{align}
where $\mathbb{Y}=(\mathbb{Y}_{i\ell})_{i,\ell=1}^{n'}$, $Y_n=(Y_{ni})_{i=1}^{n'}$, and
\begin{alignat}{3}
	\mathbb{Y}_{i\ell}(w)&:=
	k_i\delta_{i\ell}+\big(k_n-k_i\big)\frac{k_\ell u_\ell}{\widehata(u)},
	\quad && i,\ell=1,\dots,n', \\
	Y_{ni}(w)&:=\mathbb{Y}_{in}(w):=\frac{k_i-k_n}{\widehata(u)},
  \quad && i=1,\dots,n'.
\end{alignat}

\subsection{Symmetriser for the first-order subsystem}

We look for a matrix $\mathbb{A}_0(w)\in\mathbb{R}^{n'\times n'}_\mathrm{sym}$
which is $w$-locally uniformly bounded and positive definite, 
such that $\mathbb{A}_0(w)\mathbb{Y}(w)$ is symmetric for all $w$.
The ansatz of a diagonal symmetrising matrix 
\begin{align}\label{eq:choiceA0}
	\mathbb{A}_0(w)=\diag(X_1(w),\dots,X_{n'}(w))
\end{align}
leads to the conditions
$$
	X_i\mathbb{Y}_{ij} = X_j\mathbb{Y}_{ji},\quad i,j=1,\dots,n', 
$$
for the unknown smooth and positive functions $X_i$,  $i=1,\dots,n'$.
Since, by hypothesis, $0<k_1\le \ldots\le k_{n'}< k_n$, an admissible choice of $X_i$ is
\begin{align}\label{eq:def.diagX}
	X_i=\frac{k_iu_i}{k_n-k_i}.
\end{align}
Thus,~\eqref{eq:choiceA0},~\eqref{eq:def.diagX} defines an admissible symmetriser for system~\eqref{eq:sys.w'}.
In view of Lemma~\ref{l:diffeo},
this completes the proof of Theorem~\ref{thm:normal.form}. 

\section{Local classical solutions}

\label{sec:loc.ex}

In this section, we prove Theorem~\ref{thm:ex.for.symH} and deduce Theorem~\ref{thm:locex.crossdiffusion} as a corollary.
 Thanks to its symmetry, the hyperbolic part can be treated following a well-established procedure based on $H^s$ energy estimates at the linear approximate level and a Picard iteration, cf.~\cite{BenGaSerre_2007,Majda_1984} and the references therein. Hence, our main task is to properly take care of the coupling between the hyperbolic and the parabolic problem.
 
We only present the proof for integer $s$. 
 Using basic tools from Fourier analysis, the extension to fractional $s>d/2+1$ should be straightforward. For the most part, we avoid using the explicit structure obtained in Section~\ref{sec:nf.explicit}. In this way, we only require
 a few minor changes to treat the Cauchy problem for the general rank-$\rk$ system~\eqref{eq0} once brought into a normal form.

\subsection{Preliminaries}

	Let $\mathbb{A}_0$ be a smooth symmetriser for~\eqref{eq:hyperbolic.sys}. We will construct a strong solution (within the appropriate regularity class) to the symmetric hyperbolic--parabolic system
	\mathtoolsset{showonlyrefs=false}
	\begin{subequations}\label{eq:ph.sym}
		\begin{align}
			\label{eq:hyperbolic.sym}
			\mathbb{A}_0(w)\partial_tw' &=\nabla w_n\cdot \mathbb{A}_1(w)\nabla w' + V_n(w)|\nabla w_n|^2,
			\\\label{eq:parab.}
			\partial_tw_n&=\divv(a(w)\nabla w_n),
		\end{align}
	\end{subequations}	
	\mathtoolsset{showonlyrefs}
with $w|_{t=0}=\win$,
	where $\mathbb{A}_1(w)=\mathbb{A}_0(w)\mathbb{Y}(w)$ and $V_n(w)=\mathbb{A}_0(w)Y_n(w)$.
	
By hypothesis, we can define positive constants
	\begin{align*}
	r := \inf_{x \in \mathbb{T}^d} \win_n(x) 
	\quad \text{and} \quad R:= \|\win\|_{H^s} .
	\end{align*}
As an immediate consequence, this gives
	\begin{align*}
	    \|\win_i\|_{L^\infty}\le LR\quad\text{for }i=1,\dots,n ,
	\end{align*}
where $L<\infty$ denotes the constant from the Sobolev embedding $H^s(\mathbb{T}^d)
\hookrightarrow L^\infty(\mathbb{T}^d)$.
Defining $a_1:=(r/2)\min_i k_i>0$, we infer the bound $a(v)\ge a_1$ for all 
\begin{align*}
	v\in\mdom := \big\{\widetilde v\in\dom: \widetilde v_n>r/2\text{ and }
	|\widetilde v_i|< 2LR,\ i=1,\dots,n\big\}.
\end{align*}
We further choose $\lambda_1\in(0,1]$ and $\Lambda_1\ge1$ such that 
\begin{align}\label{eq:A0.posdef}
	\Lambda_1\mathbb{I}_{n'}\ge\mathbb{A}_0(v)\ge \lambda_1\mathbb{I}_{n'}
	\quad\text{for all  }v\in\mdom,
\end{align}
and abbreviate
\begin{align}\label{eq:ct.pdA0}
	K:=2\sqrt{\frac{\Lambda_1}{\lambda_1}}.
\end{align} 
Note that $a_1,\lambda_1,\Lambda_1$ and $K$ depend on $\win$ through $r$ and $R$ only.

To avoid regularity issues during the iteration process, we mollify the initial data.
Let $(\eta_\ell)_{\ell\in \mathbb{N}_0}\subset C^\infty(\mathbb{T}^d)$ be an approximate identity, that is, $\int_{\mathbb{T}^d}\eta_\ell\dd x=1$, 
$\|\eta_\ell\|_{L^1}\lesssim 1$ and $\lim_{\ell\to\infty}\int_{\{|x|>\delta\}}
|\eta_\ell|\dd x=0$ for all $0<\delta\ll1$. Here, $a^\ell\lesssim b^\ell$ means
that there exists a constant $C>0$ such that $a^\ell\le Cb^\ell$ for all 
$\ell\in\mathbb{N}_0$.
We choose $(\eta_\ell)$ such that $\eta_\ell\ge0$ for all $\ell$ and 
 introduce the mollified initial datum
$z^\ell:=\eta_{\ell}\ast\win$ for every $\ell\in\mathbb{N}_0$. 
After possibly passing to a subsequence of $(\eta_\ell)$, we may assume that
\begin{align}\label{eq:init.summable}
	\|z^{\ell+1}-z^\ell\|_{L^2}\lesssim 2^{-\ell}R.
\end{align}
The construction by convolution immediately yields $z^\ell_n\ge r$ and $|z^\ell_i|\le LR$, $i=1,\dots,n$ and thus, 
$z^\ell(x)\in\mdom$ for all $x\in\mathbb{T}^d$ and $\ell\in\mathbb{N}_0$.
Moreover,
\begin{align}\label{eq:z.Hs}
	\|z^\ell\|_{H^s}\le R\quad\text{ for all }\ell\in\mathbb{N}_0.
\end{align}

Before starting with our analysis, 
let us recall the following classical inequalities from 
calculus~\cite{KM_1981,Moser_1966}: 
For all $\vp,\psi\in C^\infty(\mathbb{T}^d)$ and any multi-index 
$\alpha\in\mathbb{N}_0^n$ with $|\alpha|\le\sigma$,
\begin{equation}\label{eq:Leibniz}
\begin{aligned}
	\|\partial^\alpha (\psi\vp)\|_{L^2}&\lesssim \|\psi\|_{L^\infty}\|\vp\|_{H^\sigma}
	+ \|\psi\|_{H^\sigma}\|\vp\|_{L^\infty}, \\ 
	\|[\partial^\alpha,\psi]\vp\|_{L^2}&\lesssim 
	\|\nabla \psi\|_{L^\infty}\|\vp\|_{H^{\sigma-1}}
	+\|\nabla \psi\|_{H^{\sigma-1}}\|\vp\|_{L^\infty}, 
\end{aligned}
\end{equation}
where $[A,B]:=AB-BA$ denotes the commutator of two linear operators $A$ and $B$.
Furthermore, for $g\in C^\infty(\dom)$, $\mathcal{K}\Subset\dom$, and 
$\vp\in H^\sigma(\mathbb{T}^d;\mathcal{K})$,
\begin{align}\label{eq:Moser}
	\|g(\vp)\|_{H^\sigma}\lesssim\|\vp\|_{H^\sigma}+1,
\end{align}
where the constant associated to this inequality depends on
$\|g\|_{C^{\sigma}(\mathcal{K})}$ and $\|\vp\|_{L^\infty}$.
 Finally, let us introduce the Banach space
$X^\sigma_t:=C([0,t];H^\sigma(\mathbb{T}^d))$ 
with the norm
\begin{align}
	\|v\|_{X^\sigma_t}:=\sup_{\tau\in(0,t)}\|v(\tau)\|_{H^\sigma}.
\end{align}

\subsection{Iteration scheme}

We initialise $w^{0}:=z^0$ and $t_0=\infty$, and consider the following iteration scheme.
	Given $v:=w^{\ell-1}\in C^\infty([0,t_{\ell-1}){\times}\mathbb{T}^d;\dom)$, $\ell\in\mathbb{N}$, with $v_n\ge r$ and $\|v\|_{X^s_{t}}<KR$ for all $t<t_{\ell-1}$,
	 we let the next iterate $w^ {\ell}$ be the solution $w=(w',w_n)$  to the linear decoupled system
	\mathtoolsset{showonlyrefs=false}
	\begin{subequations}\label{eq:sys.vw}
		\begin{align}
			\label{eq:hp.vw}
			\mathbb{A}_0(v)\partial_t w' &=\nabla v_n\cdot\mathbb{A}_1(v)\nabla w' + V_n(v)|\nabla v_n|^2,
			\\\label{eq:p.vw}
			\partial_t w_n&=\divv(a(v)\nabla w_n),
		\end{align}
	\end{subequations}\mathtoolsset{showonlyrefs}supplemented by the initial 
	condition~$ w|_{t=0}=z^{\ell}$. Notice that $w^{\ell}$  is well defined and 
	smooth on the entire time interval $[0,t_{\ell-1})$ 
	thanks to classical theory for 
	linear, uniformly parabolic equations resp.\ linear symmetric hyperbolic systems 
	with smooth coefficients (cf.~\cite{LSU_1968} resp.~\cite[Chapter~2]{BenGaSerre_2007})
  and the fact that there is no coupling between $w'$ and $w_n$.
Moreover,  the maximum principle implies that
\begin{align}\label{eq:val.p}
	r\le w_n^{\ell}(t,\cdot)\le LR
\quad\text{ for all }t\in[0,t_{\ell-1}).
\end{align}
We then let $0<t_{\ell}\le t_{\ell-1}$ be the maximal time 
less than or equal to $t_{\ell-1}$ such that 
\begin{align}\label{eq:HsBd}
	\|w^{\ell}\|_{X^s_{t}}<KR\quad\text{for all }t<t_{\ell},
\end{align}
where $K>1$ is the constant in~\eqref{eq:ct.pdA0}. In view of~\eqref{eq:z.Hs}, the time $t_{\ell}\in(0,t_{\ell-1}]$ is indeed well-defined.

The above construction and the Sobolev embedding $H^s\subset L^\infty$ (with constant $L$) gives us a first rough control of the values of the iterates:
\begin{align}\label{eq:bd.values.rough}
	w^{\ell}(t,x)\in  [-LKR,LKR]^{n'} \times[r,LR] 
	\quad\text{for all }(t,x)\in (0,t_\ell)\times\mathbb{T}^d.
\end{align}
Since $K$ depends on the constants $\Lambda_1$ and $\lambda_1$,  the control~\eqref{eq:bd.values.rough} needs to be upgraded before we can take advantage of the strict positive definiteness of $\mathbb{A}_0$ in $\dom$.

\subsection{Uniform bounds}

\begin{lemma}[Control of values]\label{l:control.val}
	There exists a time $T_1=T_1(R,r^{-1})>0$ such that for all $\ell\in\mathbb{N}_0$,
	\begin{align}
		w^{\ell}(t,x)\in \dom_0\quad\text{for all }(t,x)\in (0,\widehat t_\ell)
		\times\mathbb{T}^d,\mbox{ where }\widehat t_\ell:=\min\{t_\ell, T_1\}.
	\end{align}
\end{lemma}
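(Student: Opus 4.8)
The goal is to find a time $T_1$, depending only on $R$ and $r^{-1}$, that upgrades the rough bound~\eqref{eq:bd.values.rough} to confinement in the fixed set $\dom_0$. The natural choice here is to take a slightly enlarged version of $\mdom$, say
\[
\dom_0 := \big\{\widetilde v\in\dom : \widetilde v_n > r/2 \text{ and } |\widetilde v_i| < 2LR,\ i=1,\dots,n'\big\},
\]
which is relatively compact in $\dom$, so that the constants $a_1,\lambda_1,\Lambda_1,K$ are exactly the ones attached to $\mdom\supset\dom_0$. The lower bound $w^\ell_n\ge r>r/2$ is already furnished by the maximum principle~\eqref{eq:val.p}, and needs no further work. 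Similarly the upper bound $w^\ell_n\le LR\le 2LR$ is immediate. So the entire content is to show that the hyperbolic components $w'^{\ell}$ stay below $2LR$ in sup-norm up to time $T_1$.

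\textbf{Main step: an $L^\infty$ bound on $w'$ that does not see $K$.} The point of the lemma is that the crude bound $\|w'^\ell\|_{L^\infty}\le LKR$ carries the $K$-factor, which is too large. Instead I would argue directly from the ODE along trajectories. Since $\mathbb{A}_0(v)$ is invertible on $\mdom$, equation~\eqref{eq:hp.vw} can be rewritten as $\partial_t w' = b(t,x)$ with $b:=\mathbb{A}_0(v)^{-1}\big(\nabla v_n\cdot\mathbb{A}_1(v)\nabla w' + V_n(v)|\nabla v_n|^2\big)$, whence pointwise
\[
|w'^{\ell}(t,x)| \le |z^{\ell}(x)| + \int_0^t \|b(\tau,\cdot)\|_{L^\infty}\,\dd\tau \le LR + \int_0^t \|b(\tau,\cdot)\|_{L^\infty}\,\dd\tau .
\]
It therefore suffices to bound $\int_0^t\|b\|_{L^\infty}$. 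On $\mdom$ the smooth coefficients $\mathbb{A}_0^{-1},\mathbb{A}_1,V_n$ are bounded by a constant $C_0=C_0(R,r^{-1})$, so $\|b(\tau)\|_{L^\infty}\lesssim C_0(\|\nabla v_n(\tau)\|_{L^\infty}\|\nabla w'^\ell(\tau)\|_{L^\infty} + \|\nabla v_n(\tau)\|_{L^\infty}^2)$. Using the Sobolev embedding $H^{s-1}\hookrightarrow L^\infty$ (valid since $s-1>d/2$) together with the inductive bounds $\|v\|_{X^s_t}<KR$ and $\|w^\ell\|_{X^s_t}<KR$, we get $\|\nabla v_n(\tau)\|_{L^\infty}\lesssim KR$ and $\|\nabla w'^\ell(\tau)\|_{L^\infty}\lesssim KR$, hence $\|b(\tau)\|_{L^\infty}\le C_1(R,r^{-1})$ for all $\tau<t_\ell$. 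Plugging this in gives $|w'^\ell(t,x)|\le LR + C_1 t$, so choosing $T_1 := LR/C_1$ yields $|w'^\ell(t,x)|\le 2LR$ for $t\le T_1$, which is what we need. (For $\ell=0$ the iterate is constant in time, $w^0=z^0\in\dom_0$, so the statement is trivial.)

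\textbf{Remaining bookkeeping.} One must check that $\widehat t_\ell=\min\{t_\ell,T_1\}>0$ is the right interval: on $(0,t_\ell)$ the bound~\eqref{eq:HsBd} holds, which is exactly what licenses the $H^s$-estimates above, and on $(0,T_1)$ the trajectory argument confines the values; intersecting the two gives the claim. It is worth noting that $T_1$ depends only on $R$ and $r^{-1}$, not on $\ell$, precisely because the constant $C_1$ is built from the uniform ($\ell$-independent) bounds $KR$ and the $C^0$-norms of the coefficient fields on the fixed set $\mdom$. I do not expect any genuine obstacle here; the only subtlety is the conceptual one already flagged in the text — the $K$-weighted bound~\eqref{eq:bd.values.rough} is useless for exploiting positive-definiteness of $\mathbb{A}_0$, so one is forced to go through the integrated ODE along characteristics rather than an energy estimate, trading a factor $K$ for an extra factor $t$ that can be absorbed by shrinking the time horizon.
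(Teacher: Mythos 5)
Your proof is correct and follows essentially the same route as the paper's: both bound $\partial_t w'$ uniformly on $(0,\widehat t_\ell)$ via the coefficients' boundedness on a fixed $(R,r^{-1})$-dependent compact subset of $\dom$ together with the rough control~\eqref{eq:HsBd}--\eqref{eq:bd.values.rough}, and then integrate in time and shrink $T_1$ so the increment is at most $LR$. The only cosmetic difference is that you bound $\partial_t w'$ directly in $L^\infty$ whereas the paper first estimates it in $H^{s-1}$ and then invokes the embedding $H^{s-1}\hookrightarrow C(\mathbb{T}^d)$; also note that the coefficient fields need only be bounded on the rough-bound set $[-LKR,LKR]^{n'}\times[r,LR]$, not on $\dom_0$ itself, which is what the paper actually uses and avoids any inductive appeal to the lemma being proved.
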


\begin{proof}
	The assertion is true for $\ell=0$ since, by construction, $z^0(x)\in\mdom$ for all $x\in\mathbb{T}^d$.
	Let now $\ell\ge 1$. It follows from equation~\eqref{eq:hp.vw} that, 
	for all $\tau<t_{\ell}\le t_{\ell-1}$,
	\begin{align}
		\|\partial_t (w^{\ell})'\|_{X_\tau^{s-1}}&\le C\big(\|w^{\ell-1}\|_{X_\tau^s},\|(w^{\ell})'\|_{X_\tau^s},R,r^{-1}\big)
		\le C_0,
	\end{align}
where $C_0=C_0(R,r^{-1})$, and we used inequalities~\eqref{eq:Leibniz}--\eqref{eq:Moser} and the control~\eqref{eq:HsBd}--\eqref{eq:bd.values.rough}.
Hence, for all $t<t_{\ell}$ and every $i\in\{1,\dots,n'\}$,
\begin{align}
  \|w^{\ell}_i(t)-w^{\ell}_i(0)\|_{C(\mathbb{T}^d)}\le  t L_1C_0.
\end{align}
Here, $L_1$ denotes the constant associated with the embedding $H^{s-1}(\mathbb{T}^d)
\hookrightarrow C(\mathbb{T}^d)$.
Thus, with the choice $T_1:=LR/(2L_1C_0)$,
we find that 
$$
  |w^{\ell}_i(t,x)| \le |w^\ell_i(0,x)| + tL_1C_0 \le LR + T_1L_1C_0
	= \frac{3}{2}LR < 2LR
$$
for $i=1,\dots,n'$ and all $(t,x)\in (0,\widehat{t}_\ell)\times\mathbb{T}^d$.
Combined with inequalities~\eqref{eq:val.p}, we deduce the assertion.
\end{proof}

\begin{lemma}[Uniform bounds]\label{l:step1}
For all $\ell\in\mathbb{N}_0$, let $\widehat t_\ell=\min\{t_\ell, T_1\}$, where $T_1>0$ denotes the constant from Lemma~\ref{l:control.val}.
	\begin{enumerate}[label={\rm (\roman*)},leftmargin=1.8\parindent]
		\item\label{it:ubd.gradw1}  There exists $F(t)=F(t,R,r^{-1})>0$ which is
		 continuous on $[0,\infty)^3$ and non-decreasing in each of its arguments such that for all $\ell\in\mathbb{N}_+$,
		\begin{align}\label{eq:s+1}
			\int_0^{t}\!\!\|\nabla w_n^{\ell}(\tau)\|_{H^{s}}^2
			\dd\tau\le F(t),\quad t\in[0,\widehat t_{\ell-1}).
		\end{align}
		\item\label{it:lowerbd.time}\label{it:bd.c} There exists $T_*=T_*(R,r^{-1})>0$ such that $\widehat t_\ell> T_*$ for all $\ell\in\mathbb{N}$. 
	\end{enumerate}
\end{lemma}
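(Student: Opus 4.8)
The plan is to derive closed higher-order energy estimates for the iterates on a time interval whose length depends only on $R$ and $r^{-1}$, treating the parabolic component $w_n^\ell$ first and then feeding the resulting bound into the hyperbolic component $(w^\ell)'$. For part (i), I would apply $\partial^\alpha$ with $|\alpha|\le s$ to the linear parabolic equation \eqref{eq:p.vw} (with $v=w^{\ell-1}$), multiply by $\partial^\alpha w_n^\ell$, and integrate over $\mathbb{T}^d$. The principal term yields $-\int a(v)|\nabla\partial^\alpha w_n^\ell|^2\,\dd x \le -a_1\|\nabla\partial^\alpha w_n^\ell\|_{L^2}^2$ using $a(v)\ge a_1$ on $\mdom$ (valid by Lemma~\ref{l:control.val} on $(0,\widehat t_{\ell-1})$). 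The commutator $[\partial^\alpha,a(v)]\nabla w_n^\ell$ is estimated via the Moser-type inequalities \eqref{eq:Leibniz}--\eqref{eq:Moser}: since $a\in C^\infty(\dom)$ and $v$ ranges in a fixed compact set $\dom_0$, one gets $\|\nabla a(v)\|_{L^\infty}, \|\nabla a(v)\|_{H^{s-1}} \lesssim C(\|w^{\ell-1}\|_{X^s_\tau}) \le C(KR,r^{-1})$, so the commutator contributes a term bounded by $C_0\|\nabla w_n^\ell\|_{H^{s}}\|\nabla w_n^\ell\|_{H^{s-1}}$, which is absorbed into the dissipation plus a lower-order term by Young's inequality. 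Summing over $|\alpha|\le s$ and integrating in time, a Gr\"onwall argument produces
$$
	\|w_n^\ell(t)\|_{H^s}^2 + a_1\int_0^t\|\nabla w_n^\ell(\tau)\|_{H^s}^2\,\dd\tau \le G(t,R,r^{-1}),
$$
with $G$ continuous and nondecreasing, and \eqref{eq:s+1} follows with $F(t):=a_1^{-1}G(t,R,r^{-1})$.

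For part (ii), the goal is a \emph{uniform} lower bound $T_*>0$ on the exit times $\widehat t_\ell$, i.e.\ to show the $H^s$-bound \eqref{eq:HsBd} with constant $KR$ is not saturated before time $T_*$. I would run the hyperbolic energy estimate on \eqref{eq:hp.vw}: apply $\partial^\alpha$, pair with $\mathbb{A}_0(v)\partial^\alpha(w^\ell)'$, and use the symmetry of $\mathbb{A}_1(v)=\mathbb{A}_0(v)\mathbb{Y}(v)$ to integrate by parts in the principal term $\nabla v_n\cdot\mathbb{A}_1(v)\nabla\partial^\alpha(w^\ell)'$, trading a derivative so that only $\|\nabla^2 v_n\|$ and $\|\nabla v_n\|_{L^\infty}$-type quantities and commutators remain. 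This is the standard Friedrichs-type estimate: $\tfrac{\dd}{\dd t}\int \partial^\alpha(w^\ell)'\cdot\mathbb{A}_0(v)\partial^\alpha(w^\ell)'\,\dd x \le C\big(\|v_n\|_{H^{s+1}}, \|v\|_{H^s}, \ldots\big)\big(1+\|(w^\ell)'\|_{H^s}^2\big)$. The crucial point is that the right-hand side involves $\|v_n\|_{H^{s+1}}=\|w_n^{\ell-1}\|_{H^{s+1}}$, which is \emph{not} $X^s$-bounded but is square-integrable in time by part (i) applied at level $\ell-1$; hence after dividing by the $\mathbb{A}_0$-weight (controlled by $\lambda_1,\Lambda_1$) and using Gr\"onwall with the time-integrable coefficient $F'$, one obtains
$$
	\|(w^\ell)'(t)\|_{H^s}^2 \le \Big(\|z^\ell\|_{H^s}^2 + \int_0^t (\cdots)\,\dd\tau\Big)\exp\!\Big(C\int_0^t(1+\|w_n^{\ell-1}(\tau)\|_{H^{s+1}}^2)\,\dd\tau\Big) \le \frac{\Lambda_1}{\lambda_1}R^2\,\cdot\, h(t,R,r^{-1}),
$$
with $h(0,\cdot)=1$ (using $\|z^\ell\|_{H^s}\le R$ from \eqref{eq:z.Hs} and $\|z^\ell_n\|_{H^s}\le R$ for the parabolic part). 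Since $K=2\sqrt{\Lambda_1/\lambda_1}$, one has $\Lambda_1 h/\lambda_1 < K^2/2 < K^2$ for $t$ small, so there is $T_*=T_*(R,r^{-1})>0$ with $\|w^\ell\|_{X^s_t}<KR$ for all $t\le\min\{T_*,\widehat t_{\ell-1}\}$; by the maximality defining $t_\ell$ this forces $\widehat t_\ell=\min\{t_\ell,T_1\}>\min\{T_*,\widehat t_{\ell-1}\}$, and an induction on $\ell$ (the base case $\ell=0$ being immediate) yields $\widehat t_\ell>\min\{T_*,T_1\}$ uniformly; relabelling $T_*:=\min\{T_*,T_1\}$ finishes the proof.

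The main obstacle is the coupling bookkeeping in part (ii): unlike a pure symmetric hyperbolic system, the coefficient $\mathbb{A}_1(v)$ and the forcing $V_n(v)|\nabla v_n|^2$ depend on $v_n=w_n^{\ell-1}$, and after the top-order commutator estimates one is left with a factor $\|\nabla^2 w_n^{\ell-1}\|_{H^{s-1}}\sim\|w_n^{\ell-1}\|_{H^{s+1}}$ multiplying $\|(w^\ell)'\|_{H^s}$ in the energy inequality. The resolution — and the reason the scheme closes — is precisely that this factor is only $L^2$-in-time (the ``modest regularising effect'' of the parabolic block), so it can serve as a Gr\"onwall coefficient rather than needing to be absorbed; one must be careful that the constant $C_0$ controlling it is uniform over the iterates, which is guaranteed by Lemma~\ref{l:control.val} confining all $w^\ell$ to the fixed compact $\dom_0$. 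A secondary technical point is that the quadratic forcing $V_n(v)|\nabla v_n|^2$ contributes, after differentiation, terms like $\|\nabla v_n\|_{L^\infty}\|\nabla^2 v_n\|_{H^{s-1}}$, which are again handled by $\|\nabla v_n\|_{L^\infty}\le C(KR)$ (from $s-1>d/2$) times the same time-integrable factor.
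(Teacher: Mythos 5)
Your proposal is correct and follows essentially the same strategy as the paper: a parabolic $H^s$ energy estimate with commutator bounds and Gr\"onwall for part (i), then a symmetric hyperbolic $H^s$ energy estimate for part (ii) in which the symmetry of $\mathbb{A}_1(v)=\mathbb{A}_0(v)\mathbb{Y}(v)$ is used to integrate the principal term by parts, and Gr\"onwall is closed using the $L^2(0,t;H^{s+1})$-control on $v_n=w_n^{\ell-1}$ from part (i) as a time-integrable coefficient. The only cosmetic difference is that the paper first left-multiplies \eqref{eq:hp.vw} by $\mathbb{A}_0(v)^{-1}$, applies $\partial^\alpha$, and then re-multiplies by $\mathbb{A}_0(v)$ before pairing with $w'_\alpha$, which is equivalent to the pairing you describe; both yield the same weighted quadratic form and the same choice of $T_*$ from $K=2\sqrt{\Lambda_1/\lambda_1}$.
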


\begin{remark}\label{rem:temp.bd}
We infer immediately from Lemma~\ref{l:step1} and equations~\eqref{eq:sys.vw}
a uniform bound on the time derivative of the iterates: for all $\ell\in \mathbb{N}_+$,
\begin{align}
	\|\partial_t(w^\ell)'\|_{X_{T_*}^{s-1}}\le C, \quad
	\|\partial_tw_n^\ell\|_{L^2(0,T_*;H^{s-1})}\le C({T_*}), \label{eq:ddt.w1}
\end{align}
where $C$ and $C(T_*)$ may depend on $R$ and $r^{-1}$.
\end{remark}

		\begin{proof}[Proof of Lemma~\ref{l:step1}]Let $\ell\in\mathbb{N}$ and abbreviate, as before, $w:=w^{\ell}$ and $v:=w^{\ell-1}$. We will always suppose that $t<\widehat t_{\ell-1}$, which guarantees that $v(\tau,x)\in \dom_0$ for all $(\tau,x)\in(0,t)\times\mathbb{T}^d$ and $\|v\|_{X_t^s}\le KR$.
		
{\em Re~\ref{it:ubd.gradw1}:}
			For a multi-index $\alpha\in\mathbb{N}_0^d$ of order $0\le|\alpha|\le s$, 
			we differentiate~\eqref{eq:p.vw} by the spatial differential operator $\partial^\alpha$:
			\begin{align}
				\partial_t\partial^\alpha w_n 
				= \divv\big(a(v)\partial^\alpha \nabla w_n\big)
				+\divv\big([\partial^\alpha ,a(v)]\nabla w_n\big) .
			\end{align}
			Testing this identity with $\partial^\alpha w_n$, we obtain
			\begin{align*}
				\frac{1}{2}\frac{\dd}{\dd t}\|\partial^\alpha w_n\|_{L^2}^2
				+a_1\int_{\mathbb{T}^d}|\partial^\alpha \nabla w_n|^2\dd x
				\le C\|[\partial^\alpha ,a(v)]\nabla w_n\|_{L^2}\|\partial^\alpha \nabla w_n\|_{L^2},
			\end{align*}
			and hence,
			\begin{align}
				\frac{1}{2}\frac{\dd}{\dd t}\|\partial^\alpha w_n\|_{L^2}^2
				+\frac{a_1}{2}\|\partial^\alpha \nabla w_n\|^2_{L^2}
				&\le C(a_1)\|[\partial^\alpha ,a(v)]\nabla w_n\|_{L^2}^2.
			\end{align}	
			Upon taking the sum over  $\alpha$, $0\le|\alpha|\le s$, integrating in time and using inequalities~\eqref{eq:Leibniz}--\eqref{eq:Moser} and the Sobolev embedding $H^{s-1}\hookrightarrow L^\infty$, we arrive at
			\begin{align}
				\frac{\dd}{\dd t}\| w_n\|_{H^{s}}^2+a_1\|\nabla w_n\|^2_{H^{s}}
				\le  R^2C(R,r^{-1})\| w_n\|_{H^{s}}^2.
			\end{align}
 		Gronwall's lemma implies that
			\begin{align}\label{eq:111}
				\| w_n(t)\|_{H^{s}}^2+a_1\int_0^t\|\nabla w_n\|^2_{H^{s}}\dd\tau 
				\le \| w_n(0)\|_{H^{s}}^2
				\exp(tC(R,r^{-1}))
			\end{align}
		for all $t\in(0,\widehat t_{\ell-1})$.
 The rough bound~\eqref{eq:s+1} is a consequence of~\eqref{eq:111} and the initial condition 
 $w_{|t=0}=z^{\ell}$, combined with the control~\eqref{eq:z.Hs} and the fact that the above argument holds for any $\ell\in \mathbb{N}_0$.

{\em Re~\ref{it:lowerbd.time}:}
In the following, we abbreviate $w_\alpha':=\partial^\alpha w'$. We first left-multiply~\eqref{eq:hp.vw}
by the matrix $\mathbb{A}_0(v)^{-1}$, differentiate the resulting identity by $\partial^\alpha$ and subsequently left-multiply by $\mathbb{A}_0(v)$. This gives 
\begin{align}
		& \mathbb{A}_0(v)\partial_t w'_\alpha =
		\nabla v_n\cdot\mathbb{A}_1(v)\nabla w' _\alpha+\mathbb{A}_0(v)\mathcal{R}_\alpha,
		\quad\mbox{where} \\
	& \mathcal{R}_\alpha:=
	\big[\partial^\alpha,\nabla v_n\cdot\mathbb{Y}(v)\big]\nabla  w'
	+\partial^\alpha \big(Y_n(v)|\nabla v_n|^2\big).
\end{align}
Next, we test this equation with $w_\alpha'$:
\begin{align*}
  \frac12&\frac{\dd}{\dd t}\int_{\mathbb{T}^d}(w_\alpha')^\tsf
	\mathbb{A}_0(v)w'_\alpha\dd x
	- \int_{\mathbb{T}^d}(w_\alpha')^\tsf\big(\mathrm{D}\mathbb{A}_0(v)\partial_t v\big)
	w_\alpha'\dd x \\
	&= \frac12\int_{\mathbb{T}^d}\nabla v_n\cdot\big\{\nabla\big((w_\alpha')^\tsf
	\mathbb{A}_1(v)w_\alpha'\big) - (w_\alpha')^\tsf
	\big(\mathrm{D}\mathbb{A}_1(v)\nabla v\big)w_\alpha'\big\}\dd x \\
	&\phantom{xx}{}
    + \int_{\mathbb{T}^d}(w_\alpha')^\tsf\mathbb{A}_0(v)\mathcal{R}_\alpha \dd x.
\end{align*}
Hence, after a rearrangement and an integration by parts in the second step, 
\begin{align}
	\frac{1}{2}&\frac{\dd}{\dd t}\int_{\mathbb{T}^d}( w_\alpha')^\tsf
	\mathbb{A}_0(v){ w}'_\alpha\dd x
	\le  \frac12\int_{\mathbb{T}^d} \nabla v_n\cdot \nabla \big(
	(w'_\alpha)^\tsf\mathbb{A}_1(v)w'_\alpha\big)\dd x \\
	&\phantom{xx}{}
	+ \frac{1}{2}\int_{\mathbb{T}^d} \big(|\nabla v_n||\mathrm{D}\mathbb{A}_1(v)
	\nabla v| + |\mathrm{D}\mathbb{A}_0(v)\partial_tv|\big) |w'_\alpha|^2\dd x
	+ \int_{\mathbb{T}^d} |\mathcal{R}_\alpha||{ w}'_\alpha|\dd x \\
	&\lesssim \big\||\Delta v_n|+|\nabla v|^2+|\partial_tv|\big\|_{L^\infty}
	\|w'_\alpha\|_{L^2}^2 + \|\mathcal{R}_\alpha\|_{L^2}\|w'_\alpha\|_{L^2},
\end{align}
where the constant associated to the last inequality depends on $R$ and $r^{-1}$.

To proceed, we estimate the remainder term $\mathcal{R}_\alpha$ using again 
inequalities~\eqref{eq:Leibniz}--\eqref{eq:Moser}, 
the bound $\sup_{t<t_{\ell-1}}\|v\|_{X_t^s}\le KR$ and the embedding
$H^{s-1}\hookrightarrow L^\infty$:
\begin{align*}
		\sum_{|\alpha|\le s}\|\mathcal{R}_\alpha\|_{L^2}
		&\le \sum_{|\alpha|\le s}\Big(\|\nabla(\nabla v_n\cdot\mathbb{Y}(v))\|_{L^\infty}
		\|\nabla w'\|_{H^{|\alpha|-1}} \\
		&\phantom{xx}{}+ \|\nabla(\nabla v_n\cdot\mathbb{Y}(v))
		\|_{H^{|\alpha|-1}}\|\nabla w'\|_{L^\infty} 
		+ \|\partial^\alpha\big(Y_n(v)|\nabla v_n|^2\big)\|_{L^2}\Big) \\
		&\lesssim \big(\|\nabla v_n\|_{H^{s}}+1\big)
		\big(\|\nabla{ w}'\|_{H^{s-1}}+1\big).
\end{align*}
To estimate the time derivative $\partial_tv$, we use the equation for $v=w^{\ell-1}$; 
i.e.\ \eqref{eq:sys.vw} with $\ell$ replaced by $\ell-1$ supposing that $\ell\ge2$ 
(if $\ell=1$, then $\partial_tv\equiv0$ and there is nothing to show). This gives
\begin{align}
	\|\partial_tv\big\|_{L^\infty}\le \|\partial_tv_n\big\|_{L^\infty}
	+\|\partial_tv'\big\|_{L^\infty}
	\lesssim	\|\Delta v_n\|_{H^{s-1}}+1 \lesssim \|\nabla v_n\|_{H^{s}}+1.
\end{align}
In combination, we infer for $\nu(t):=\|\nabla v_n(t)\|_{H^{s}}{+}1$
and $C_1=C_1(R,r^{-1})$ that
\begin{align}\label{eq:energy.id.s}
		\frac{\dd}{\dd t}\sum_{|\alpha|\le s}\int_{\mathbb{T}^d}({ w_\alpha}')^\tsf
		\mathbb{A}_0(v){ w}'_\alpha\dd x
		\le
		C_1\nu(t)\|{ w}'(t)\|_{H^s}^2+C_1\nu(t),
\end{align}
where we used the elementary estimate $\rho\lesssim \rho^2+1$.

Taking into account the equivalence 
(cf.~\eqref{eq:A0.posdef} and Lemma~\ref{l:control.val})
\begin{align}
  & \|w'\|_{H^s}^2\sim\sum_{|\alpha|\le s}\int_{\mathbb{T}^d}(w_\alpha')^\tsf
	\mathbb{A}_0(v) w'_\alpha\dd x
\end{align}
with associated constants $\lambda_1$ and $\Lambda_1$, 
and Gronwall's inequality, we deduce a bound of the form
\begin{align}\label{eq:113.}
	& \lambda_1\| w'(t)\|_{H^s}^2\le \big(\Lambda_1\|  w'(0)\|_{H^s}^2
	+\sqrt{t}\beta\big)	\exp(\sqrt{t}\beta), \quad\mbox{where} \\[.2mm]
	& \beta:=\bigg(\int_0^t\|\nabla v_n\|_{H^{s}}^2\dd\tau\bigg)^{1/2}C_1+\sqrt{t}C_1.
\end{align}
Recalling~\eqref{eq:s+1}, adding inequalities~\eqref{eq:111} and~\eqref{eq:113.},
inserting the initial value $w_{|t=0}=z^{\ell}$
and recalling definition \eqref{eq:ct.pdA0} of $K$, we infer the bound
\begin{align}
	\| w(t)\|_{H^s}^2\le 2\frac{\Lambda_1}{\lambda_1}R^2<(KR)^2
\end{align}
for all $t\in[0,T_*]$, provided that $T_*\in(0,\widehat t_{\ell-1})$ 
(depending on $R$ and $r^{-1}$) is small enough.
This implies that $\widehat t_{\ell}>T_*$, and inductively we 
infer~\ref{it:bd.c}. 
\end{proof}

\subsection{Convergence}\label{sssec:convergence}

We show that the approximate solutions converge to a strong solution of
system~\eqref{eq:sys.parab-hyperb} as $\ell\to\infty$.

\begin{lemma}[Convergence]\label{l:convergence}
	There exists a solution $w=(w',w_n):[0,{\tast}]\times\mathbb{T}^d\to\dom_0$ 
	to system~\eqref{eq:sys.parab-hyperb} in $(0,{\tast})\times\mathbb{T}^d$ satisfying
	$w_{|t=0}=\win$ and the regularity
	\mathtoolsset{showonlyrefs=false}
	\begin{subequations}\label{eq:subopt.reg.w}
	\begin{alignat}{3}
		&w\in L^\infty(0,{\tast};H^s)\cap C_w([0,{\tast}];H^s), \quad
  	w_n\in L^2(0,{\tast};H^{s+1}),\label{eq:subo.1} \\
   &\partial_tw_n\in L^2(0,{\tast};H^{s-1}),\quad \partial_t w'\in 
	L^\infty(0,{\tast};H^{s-1})\label{eq:subo.2}
\end{alignat}
\end{subequations}
such that, as $\ell\to\infty$,
	\begin{subequations}\label{eq:conv.w}
	\begin{alignat}{3}
		&w^\ell\to w\quad&&\text{in }X_{{\tast}}^\sigma\text{ for every }\sigma<s,
		\label{eq:conv.Xsigma} \\
		&w^\ell(t)\weakly w(t)\quad &&\text{in }H^s \text{ uniformly in }
		t\in[0,{\tast}], \\
		&w^\ell_n\weakly w_n\quad&&\text{in }L^2(0,{\tast};H^{s+1}),\label{eq:444} \\
		&\partial_tw^\ell_n\weakly \partial_tw_n\quad&&\text{in }L^2(0,{\tast};H^{s-1})
		,\label{eq:445} \\
		&\partial_t(w^\ell)'\weakstar \partial_tw'\quad&&\text{in }
		L^\infty(0,{\tast};H^{s-1}).
	\end{alignat}
\end{subequations}
As a consequence, 
\begin{subequations}\label{eq:reg.classical}
\begin{alignat}{3}
	& w_n\in C([0,{\tast}];H^s),\quad
	w'\in C^1([0,{\tast}]\times\mathbb{T}^d),\label{eq:rc1} \\
	& \partial_tw_n,\,\nabla^2 w_n\in L^2(0,{\tast};C(\mathbb{T}^d)).\label{eq:reg.p} 
\end{alignat}
\end{subequations}
\end{lemma}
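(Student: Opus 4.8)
The plan is to construct $w$ as the limit of the iterates $w^\ell$ from the previous subsection, proving convergence in a weaker norm $X_{\tast}^\sigma$ with $\sigma<s$ (where the nonlinearities are Lipschitz in the available topology) while retaining the uniform $H^s$ bounds as a reservoir of compactness. First I would fix $\tast:=T_*$ from Lemma~\ref{l:step1}\ref{it:lowerbd.time}, so that all iterates $w^\ell$ are defined and smooth on $[0,\tast]\times\mathbb{T}^d$, take values in $\dom_0$, and satisfy the bounds $\|w^\ell\|_{X_{\tast}^s}<KR$, $\|\partial_t(w^\ell)'\|_{X_{\tast}^{s-1}}\le C$, $\|\partial_tw_n^\ell\|_{L^2(0,\tast;H^{s-1})}\le C(\tast)$ and $\int_0^{\tast}\|\nabla w_n^\ell\|_{H^s}^2\,\dd\tau\le F(\tast)$ collected in Lemma~\ref{l:step1} and Remark~\ref{rem:temp.bd}.

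The core step is a contraction estimate for the differences $\delta^\ell:=w^{\ell+1}-w^\ell$ in the low norm. Writing the equations for $w^{\ell+1}$ and $w^\ell$ and subtracting, one gets for $(\delta^\ell)'$ a linear symmetric hyperbolic equation with coefficients evaluated at $v=w^\ell$ and a right-hand side that is linear in $(\delta^\ell)'$, $\nabla(\delta^\ell)'$ and in $\delta^{\ell-1}$, $\nabla\delta^{\ell-1}$, $\nabla w_n^{\ell-1}$, with coefficients controlled by the uniform $H^s$ bounds; similarly $\delta_n^\ell$ solves a linear parabolic equation $\partial_t\delta_n^\ell=\divv(a(w^\ell)\nabla\delta_n^\ell)+\divv((a(w^\ell)-a(w^{\ell-1}))\nabla w_n^\ell)$. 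Performing an $L^2$-based energy estimate on the hyperbolic part (using the symmetriser $\mathbb{A}_0(w^\ell)$, integrating the symmetric transport term by parts, and using $H^{s-1}\hookrightarrow L^\infty$ together with the uniform bound $\int_0^t\|\nabla w_n^\ell\|_{H^s}^2\,\dd\tau\le F(t)$ to absorb the coefficient factors) and a standard energy estimate on the parabolic part, one obtains an inequality of the form
\begin{align}
	\|\delta^\ell(t)\|_{L^2}^2 + \int_0^t\|\nabla\delta_n^\ell\|_{L^2}^2\,\dd\tau
	\le \|z^{\ell+1}-z^\ell\|_{L^2}^2 + \int_0^t \gamma(\tau)\big(\|\delta^\ell\|_{L^2}^2 + \|\delta^{\ell-1}\|_{L^2}^2 + \|\nabla\delta_n^{\ell-1}\|_{L^2}^2\big)\,\dd\tau,
\end{align}
with $\gamma\in L^1(0,\tast)$ depending only on $R$ and $r^{-1}$. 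Combining this with the summability $\|z^{\ell+1}-z^\ell\|_{L^2}\lesssim 2^{-\ell}R$ from~\eqref{eq:init.summable} and Gronwall's lemma, on a possibly shortened interval (or by iterating the estimate) one concludes that $\sum_\ell \|\delta^\ell\|_{X_{\tast}^0}<\infty$, so $(w^\ell)$ is Cauchy in $X_{\tast}^0=C([0,\tast];L^2)$, hence converges there. Interpolating between the $L^2$-convergence and the uniform $H^s$ bound yields convergence in $X_{\tast}^\sigma$ for every $\sigma<s$, which is~\eqref{eq:conv.Xsigma}.

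Having $w^\ell\to w$ in $X_{\tast}^\sigma$, I would extract the remaining weak/weak-$\ast$ limits in~\eqref{eq:conv.w} from the uniform bounds by Banach–Alaoglu (noting $w^\ell(t)\weakly w(t)$ in $H^s$ uniformly in $t$ follows from the uniform $X_{\tast}^s$-bound plus the strong convergence identifying the limit, giving $w\in C_w([0,\tast];H^s)$), pass to the limit in the weak formulation of~\eqref{eq:sys.vw}—here the strong $X_{\tast}^\sigma$-convergence with $\sigma>d/2+1$ handles all the nonlinear coefficients $\mathbb{A}_0(v)$, $\mathbb{A}_1(v)$, $V_n(v)$, $a(v)$ and the quadratic gradient terms, while the weak limits handle the linear highest-order factors—and thereby identify $w$ as a solution of~\eqref{eq:sys.parab-hyperb} with $w|_{t=0}=\win$, $w\in\dom_0$, and the regularity~\eqref{eq:subopt.reg.w}. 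Finally, the ``consequence'' regularity~\eqref{eq:reg.classical} is obtained by a short bootstrap: for $w_n$, the parabolic equation with coefficient $a(w)\in C([0,\tast];H^s)$ bounded below by $a_1$, right-hand side in $L^2(0,\tast;H^{s-1})$ and data in $H^s$ gives, by maximal parabolic regularity (or by the energy estimate improved to the $H^{s+1}$-level integrated in time together with $\partial_tw_n\in L^2(0,\tast;H^{s-1})$ and the Aubin–Lions/Lions–Magenes interpolation), that $w_n\in C([0,\tast];H^s)\cap L^2(0,\tast;H^{s+1})$, and then $H^{s-1}\hookrightarrow C$ and $H^{s+1}\hookrightarrow C^2$ (valid since $s>d/2+1$) yield $\partial_tw_n,\nabla^2w_n\in L^2(0,\tast;C(\mathbb{T}^d))$; for $w'$, the bounds $w'\in C([0,\tast];H^s)$ and $\partial_tw'\in L^\infty(0,\tast;H^{s-1})$ together with $H^{s}\hookrightarrow C^1$ and $H^{s-1}\hookrightarrow C$ give $w'\in C^1([0,\tast]\times\mathbb{T}^d)$ after upgrading the $C_w$-continuity in $H^s$ to strong continuity (which follows once the solution is known to be unique, or directly from the equation since $\partial_tw'\in L^\infty(0,\tast;H^{s-1})$ forces $w'\in C^{0,1}([0,\tast];H^{s-1})$ and, combined with the $H^s$-bound, $w'\in C([0,\tast];H^\sigma)$ for $\sigma<s$, enough for the pointwise $C^1$ claim via $\sigma>d/2+1$). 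I expect the main obstacle to be the contraction estimate: one must be careful that the hyperbolic difference estimate only loses derivatives on $w_n^{\ell-1}$ in a way that is controlled by the \emph{time-integrated} $H^s$-bound on $\nabla w_n$ (rather than an $L^\infty_t$-bound, which is unavailable), so that the coupling constant $\gamma$ is genuinely $L^1$ in time and Gronwall closes; getting the parabolic difference term $\divv((a(w^\ell)-a(w^{\ell-1}))\nabla w_n^\ell)$ to feed back at the right (lowest) order without spoiling the contraction is the delicate bookkeeping.
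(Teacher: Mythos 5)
Your proposal follows essentially the same approach as the paper: a low-norm ($L^2$) stability estimate for the differences $w^{\ell+1}-w^\ell$ using the symmetriser and the parabolic dissipation, Gronwall on short subintervals iterated to reach $T_*$, summability via the mollified data, interpolation against the uniform $H^s$ bound to get $X^\sigma_{T_*}$-convergence for $\sigma<s$, and then weak limits plus passage to the limit in the equation. Your displayed stability inequality is a touch imprecise (the gradient-feedback term $\|\nabla\delta_n^{\ell-1}\|_{L^2}^2$ must enter with a small prefactor so it can be dominated by the dissipation on the left, which the paper arranges via a Young-inequality parameter $\delta$), but you explicitly flag this as the delicate point, so the proposal is sound.
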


\begin{proof} We split the proof into several steps.

{\em Step~1.} We assert that there exists a function 
$p:\mathbb{N}_+\to\mathbb{R}_+$ of at most polynomial growth
such that for all $\ell\in \mathbb{N}_+$,
\begin{align}\label{eq:summable}
	& \mathsf{N}^{\ell+1}_{{\tast}}\le 2^{-\ell} p(\ell)+2^{-1} 
	\mathsf{N}^{\ell}_{{\tast}}, \quad\mbox{where} \\[2mm]
	& \mathsf{N}^\ell_{{\tast}}:=\sup_{t\in(0,{\tast})}
	\|w^{\ell}(t){-}w^{\ell-1}(t)\|_{L^2}+\|\nabla(w^{\ell}_n{-}w^{\ell-1}_n)
	\|_{L^2(0,T_*;L^2(\mathbb{T}^d))}.
\end{align}
This estimate is the key point of the proof.
To verify the assertion, we subtract the equations for two subsequent iterates,
yielding
\begin{align*}
		\mathbb{A}_0(w^\ell)\partial_t (w^{\ell+1}-w^\ell)' 
		&=\nabla w^\ell_n\cdot\mathbb{A}_1(w^\ell)\nabla (w^{\ell+1}-w^\ell)' +F_\ell, \\
		\partial_t (w_n^{\ell+1}-w_n^\ell)
		&=\divv\big(a(w^\ell)\nabla (w_n^{\ell+1}-w_n^\ell)\big)+G_\ell, 
\end{align*}
where
\begin{align*}
	F_\ell &= V_n(w^\ell)|\nabla w^\ell_n|^2-V_n(w^{\ell-1})|\nabla w^{\ell-1}_n|^2
	\big(\mathbb{A}_0(w^{\ell})-\mathbb{A}_0(w^{\ell-1})\big)\partial_t (w^\ell)' \\
  &\phantom{xx}{}+\big(\nabla w^{\ell}_n\cdot\mathbb{A}_1(w^{\ell})
  -\nabla w^{\ell-1}_n\cdot\mathbb{A}_1(w^{\ell-1})\big)\nabla (w^\ell)', \\
	G_\ell &= \divv\big((a(w^\ell)-a(w^{\ell-1}))\nabla w_n^\ell\big).
\end{align*}
Energy estimates similar to those in the proof of Lemma~\ref{l:step1} 
yield the following stability estimates for the hyperbolic and the parabolic 
components, respectively:
\begin{align}\label{eq:conv.h}
	\frac{1}{2}\frac{\dd}{\dd t}&\|\mathbb{A}_0(w^\ell)^{1/2}
	(w^{\ell+1}{-}w^\ell)'\|_{L^2}^2 \\
	&\lesssim \big(1+\|\partial_tw_n^\ell\|_{H^{s-1}}+\|\Delta w_n^\ell\|_{H^{s-1}}\big)
	\|(w^{\ell+1}{-}w^\ell)'\|_{L^2}^2 \\
	&\phantom{xx}{}+\big(\|w^{\ell}{-}w^{\ell-1}\|_{L^2}
	+\|\nabla(w^{\ell}_n{-}w^{\ell-1}_n)\|_{L^2}\big)
	\|(w^{\ell+1}{-}w^\ell)'\|_{L^2}, \\
	\label{eq:conv.p}
	\frac{1}{2}\frac{\dd}{\dd t}&\|w_n^{\ell+1}-w_n^\ell\|_{L^2}^2
	+\frac{a_1}{2}\|\nabla(w_n^{\ell+1}-w_n^\ell)\|_{L^2}^2
	\lesssim \|w^{\ell}-w^{\ell-1}\|_{L^2}^2
\end{align}
with associated constants depending on $R$ and $r^{-1}$.
Let us now define the quantity
\begin{align}
  Q_\ell(\vp,\psi) = \|\mathbb{A}_0(w^\ell)^{1/2}(\vp-\psi)'\|_{L^2}^2
+\|\vp_n-\psi_n\|_{L^2}^2.
\end{align}
It satisfies $Q_\ell(\vp,\psi)\sim \|\vp-\psi\|_{L^2}^2$ for all $\ell$.

We add \eqref{eq:conv.h} and \eqref{eq:conv.p} and use Young's inequality to 
find for any $\delta\in(0,1]$ that
\begin{align*}
	\frac{\dd}{\dd t}&Q_\ell(w^{\ell+1},w^\ell) 
	+ a_1\|\nabla(w_n^{\ell+1}-w_n^\ell)\|_{L^2}^2 \\
	&\le \big(C_\delta+C\|\partial_tw_n^\ell\|_{H^{s-1}}
	+C\|\Delta w_n^\ell\|_{H^{s-1}}\big) Q_\ell(w^{\ell+1},w^\ell) \\[1mm]
	&\phantom{xx}{}+ C\,Q_{\ell-1}(w^{\ell},w^{\ell-1})
	+\delta\|\nabla(w_n^{\ell}-w_n^{\ell-1})\|_{L^2}^2.
\end{align*}
Invoking the Gronwall lemma, inserting the bounds~\eqref{eq:s+1}--\eqref{eq:ddt.w1}
and applying~\eqref{eq:A0.posdef}, we conclude that
\begin{align}
	& \mathsf{n}^{\ell+1}(t) \le C_{T_*}\exp(C_\delta t)
	\big(\|w^{\ell+1}(0){-}w^\ell(0)\|_{L^2}+\delta\mathsf{n}^{\ell}(t)\big),
	\quad\mbox{where} \\[1mm]
	& \mathsf{n}^{\ell}(t):=\|w^{\ell}(t)-w^{\ell-1}(t)\|_{L^2}
	+\bigg(\int_0^t\|\nabla(w_n^{\ell}-w_n^{\ell-1})\|_{L^2}^2\dd\tau\bigg)^{1/2}
\end{align}
for all $t\in[0,\delta]$ and any $\delta\in(0, T_*]$, where 
$C_{T_*}=C(\sqrt{T_*F(T_*)})$ with a function $F$ as in~\eqref{eq:s+1}.

Let us now fix $\delta=\min\{1/(4C_{T_*}),T_*\}$.
By construction,  $\|w^{\ell+1}(0){-}w^\ell(0)\|_{L^2}\lesssim 2^{-\ell}R$
(cf.~\eqref{eq:init.summable}). If we choose 
${t_*}={t_*}(\delta,R,r^{-1})\in(0,\delta]$ so small that
$\exp(C_\delta {t_*})\le 2$, we deduce an estimate of the form~\eqref{eq:summable} with $T_*$ replaced by $t_*$ and with $p(\ell)\equiv c=\mathrm{const.}$
It follows by recursion that
$\mathsf{N}^{\ell+1}_{{t_*}}\le 2^{-\ell}c\ell +2^{-\ell}
\mathsf{N}^{1}_{{t_*}}$ and as a consequence
$\|w^{\ell+1}({t_*}){-}w^\ell({t_*})\|_{L^2}\le 2^{-\ell}C\ell$.
Using this decay property in $\ell$ at the new initial time $t_*$, we can repeat 
the above argument on the interval $[t_*,2t_*]$ and obtain an estimate of the 
form~\eqref{eq:summable} with $T_*$ replaced by $2t_*$ and with 
$p(\ell)\equiv C\ell$.
Iterating for a total number of $i:=\lfloor T_*/t_*\rfloor$ times, 
we infer~\eqref{eq:summable} with the time $T_*$ and
$p(\ell)\equiv C\ell^i$.

{\em Step~2:} Inequality~\eqref{eq:summable} implies that 
$\sum_{\ell\in \mathbb{N}}\mathsf{N}^\ell_{{\tast}}<\infty$.
As a consequence, the sequence $(w^{\ell+1}{-}w^\ell)_\ell\subset X^0_{{\tast}}$ 
is summable, and by completeness, there exists $w\in X^0_{{\tast}}$ such that 
$w^\ell\to w$ in $X^0_{{\tast}}$ as $\ell\to\infty$.
The uniform bounds in Lemma~\ref{l:step1} and Remark~\ref{rem:temp.bd}, 
combined with classical compactness and interpolation arguments, further yield the
convergence~\eqref{eq:conv.w} as well as the regularity~\eqref{eq:subopt.reg.w}. 
We omit the details, since an exposition of such arguments in a similar context 
has been provided, e.g., in~\cite[Chapter 10.1.1]{BenGaSerre_2007} 
and~\cite[p.~39--40]{Majda_1984}.

{\em Step~3:} We assert that the limit $w$ further has the 
regularity~\eqref{eq:reg.classical} and is a strong solution.
Indeed, the regularity~\eqref{eq:rc1} is an immediate consequence 
of~\eqref{eq:subo.2}. 
Next, the convergence~\eqref{eq:conv.Xsigma} allows us to pass to the limit 
$\ell\to\infty$ in equation~\eqref{eq:hp.vw} (where $v=w^{\ell-1}$ and $w=w^{\ell}$), 
giving in particular
$\partial_tw'\in C([0,{\tast}]\times\mathbb{T}^d)$ and showing that 
$w'\in C^1([0,{\tast}]\times\mathbb{T}^d)$ is a classical solution to~\eqref{eq:hp.vw}.
Finally, the convergences~\eqref{eq:conv.Xsigma},~\eqref{eq:444} and~\eqref{eq:445} 
imply that equation~\eqref{eq:p.vw} is fulfilled in the strong sense, and 
the regularity~\eqref{eq:reg.p} follows from the embedding 
$H^{s-1}\hookrightarrow C(\mathbb{T}^d)$.
\end{proof}

\subsection{Regularity}

To deduce the temporal continuity of $w'$ with values in  $H^s$, we need some basic uniqueness properties. 

\begin{lemma}[Uniqueness]\label{l:uniq.basic}
Let $T>0$ and $s>d/2+1$. Then the following holds:
\begin{enumerate}[label={\em (\roman*)},leftmargin=1.8\parindent]
\item
For a given initial value $\win$, there exists at most one 
strong solution $w$ of system~\eqref{eq:ph.sym} in $(0,T)\times\mathbb{T}^d$ 
satisfying the regularity~\eqref{eq:subopt.reg.w} (with ${\tast}$ replaced by $T$), 
the initial condition $w|_{t=0}=\win$ and $\min_{[0,T]\times\mathbb{T}^d} w_n>0$.
\item \label{it:uniq.hp}
For fixed strictly positive $w_n$ satisfying the regularity~\eqref{eq:subo.2} 
and a given initial condition $(\win)'$, the hyperbolic subsystem~\eqref{eq:hp.vw} 
with the coefficient function $w_n$ has at most one classical solution $w'$.
\end{enumerate}
\end{lemma}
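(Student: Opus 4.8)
The plan is to prove both uniqueness statements by standard $L^2$ energy estimates, exploiting the symmetric/parabolic structure and the fact that the assumed regularity makes the `coefficient' quantities — in particular $w_n$, $\nabla w_n$, $\partial_t w_n$, $\nabla^2 w_n$ and the hyperbolic field $w'$ — bounded in the norms that appear in the estimates (thanks to $H^{s-1}\embeds L^\infty$ with $s>d/2+1$). I would begin with part \ref{it:uniq.hp}, since it is the cleaner building block and is essentially a Gr\"onwall argument for a linear symmetric hyperbolic system. Given two classical solutions $w'_{(1)}, w'_{(2)}$ of \eqref{eq:hp.vw} with the \emph{same} coefficient function $w_n$ and the same initial datum, set $z:=w'_{(1)}-w'_{(2)}$. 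Subtracting the equations (note the right-hand side $\nabla w_n\cdot\mathbb{A}_1(w)\nabla w' + V_n(w)|\nabla w_n|^2$ depends on the full state $w=(w',w_n)$, so the difference of the linear-in-$\nabla w'$ terms splits into a part linear in $\nabla z$ with coefficient $\mathbb{A}_1$ evaluated along one solution, plus terms proportional to $z$ itself coming from the $w'$-dependence of $\mathbb{A}_0,\mathbb{A}_1,V_n$, multiplied by the bounded quantities $\nabla w_n$, $|\nabla w_n|^2$ and, after using the equation, $\partial_t w'_{(i)}$), then test with $\mathbb{A}_0(w_{(1)})z$ and use symmetry of $\mathbb{A}_0\mathbb{A}_1=\mathbb{A}_0\mathbb{Y}$ to integrate by parts the top-order term, exactly as in the proof of Lemma~\ref{l:step1}, Re~\ref{it:lowerbd.time}. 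This yields $\frac{\dd}{\dd t}\int_{\mathbb{T}^d} z^\tsf\mathbb{A}_0(w_{(1)})z\,\dd x\le C(t)\int_{\mathbb{T}^d} z^\tsf\mathbb{A}_0(w_{(1)})z\,\dd x$ with $C\in L^1(0,T)$, and Gr\"onwall together with $z|_{t=0}=0$ forces $z\equiv 0$.

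For part (i), I would treat the coupled system by the same scheme applied to the difference of two full strong solutions $w_{(1)},w_{(2)}$ satisfying \eqref{eq:subopt.reg.w}. Writing $(\zeta',\zeta_n):=(w'_{(1)}-w'_{(2)}, w_{n,(1)}-w_{n,(2)})$ and subtracting, one gets for $\zeta_n$ a linear uniformly parabolic equation $\partial_t\zeta_n=\divv(a(w_{(1)})\nabla\zeta_n)+\divv\big((a(w_{(1)})-a(w_{(2)}))\nabla w_{n,(2)}\big)$, and for $\zeta'$ a symmetric hyperbolic equation with a right-hand side controlled by $|\zeta'|$, $|\nabla\zeta_n|$ and $|\zeta_n|$ times bounded coefficients (using the regularity \eqref{eq:subo.2}, in particular $\partial_t w_n, \nabla^2 w_n\in L^2(0,T;H^{s-1})\subset L^2(0,T;L^\infty)$ and $\partial_t w'\in L^\infty(0,T;L^\infty)$). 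Testing the parabolic equation with $\zeta_n$ and the hyperbolic one with $\mathbb{A}_0(w_{(1)})\zeta'$, adding, and using $a(w_{(1)})-a(w_{(2)}) = O(|\zeta'|+|\zeta_n|)$ together with Young's inequality to absorb the single $\nabla\zeta_n$ into the parabolic dissipation $a_1\|\nabla\zeta_n\|_{L^2}^2$, one arrives at
\begin{align*}
\frac{\dd}{\dd t}\Big(\|\mathbb{A}_0(w_{(1)})^{1/2}\zeta'\|_{L^2}^2+\|\zeta_n\|_{L^2}^2\Big)\le C(t)\,\Big(\|\mathbb{A}_0(w_{(1)})^{1/2}\zeta'\|_{L^2}^2+\|\zeta_n\|_{L^2}^2\Big)
\end{align*}
with $C\in L^1(0,T)$; since $\mathbb{A}_0$ is uniformly positive definite on the (compact) range of $w_{(1)}$, Gr\"onwall and vanishing initial data give $\zeta'\equiv\zeta_n\equiv 0$. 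This is precisely the stability estimate already carried out at the approximate level in Step~1 of Lemma~\ref{l:convergence}, now applied to two exact solutions rather than successive iterates.

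The main technical point — and the only place requiring care — is the integration by parts of the top-order term $\int_{\mathbb{T}^d}\nabla w_{n,(1)}\cdot\nabla\big((\zeta')^\tsf\mathbb{A}_1(w_{(1)})\zeta'\big)\,\dd x$, which produces $-\int_{\mathbb{T}^d}(\Delta w_{n,(1)})(\zeta')^\tsf\mathbb{A}_1(w_{(1)})\zeta'\,\dd x$: this needs $\Delta w_{n,(1)}\in L^1(0,T;L^\infty)$, which is exactly supplied by $\nabla^2 w_n\in L^2(0,T;H^{s-1})$ and $s>d/2+1$. No spatial differentiation of the equations beyond order zero is needed, so no commutator estimates and no higher regularity of the coefficients are required; in particular the limited regularity of $\mathbb{A}_1^\iirm(w)$ (through its dependence on the hyperbolic components) is immaterial at the $L^2$ level. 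I would remark that the same argument applies verbatim to the general rank-$\rk$ normal form \eqref{eq:nf.B.i}, so the uniqueness needed later for Theorems~\ref{thm:ex.classical.nf.gen} and~\ref{thm:cross.gen} comes for free.
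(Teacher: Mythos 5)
Your proposal is correct and takes the same route as the paper, whose own proof of this lemma consists of the single remark that the assertions follow from energy estimates similar to those in the proof of Lemma~\ref{l:convergence}. What you have done is spell out those $L^2$ energy/Gr\"onwall estimates for the difference of two solutions, which is precisely the verification the paper leaves implicit.
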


\begin{proof}
The assertions can be deduced from energy estimates similar to those in the proof 
of Lemma~\ref{l:convergence}. 
\end{proof}

\begin{lemma}\label{l:reg.hyperbolic}
The solution $w$ constructed in Lemma~\ref{l:convergence} satisfies 
$w'\in C([0,{\tast}];H^s)$.
\end{lemma}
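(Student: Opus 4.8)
The goal is to upgrade the weak continuity $w'\in C_w([0,{\tast}];H^s)$ obtained in Lemma~\ref{l:convergence} to strong continuity $w'\in C([0,{\tast}];H^s)$. The plan is to follow the classical Kato--Majda strategy: since $w'\in C_w([0,{\tast}];H^s)$ already gives right/left weak limits at each time, it suffices to show that $t\mapsto \|w'(t)\|_{H^s}$ (or a suitable equivalent $\mathbb{A}_0$-weighted norm) is continuous; combined with weak continuity in a Hilbert space, norm continuity yields strong continuity. Actually, it is cleaner to prove continuity at an arbitrary fixed time $t_0\in[0,{\tast}]$ by a uniqueness-plus-stability argument, and the two parts of Lemma~\ref{l:uniq.basic} are exactly what make this work.

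\begin{proof}
We already know from Lemma~\ref{l:convergence} that $w'\in C_w([0,{\tast}];H^s)\cap C([0,{\tast}]\times\mathbb{T}^d)$, that $w_n\in C([0,{\tast}];H^s)\cap L^2(0,{\tast};H^{s+1})$ with $\partial_t w_n\in L^2(0,{\tast};H^{s-1})$ and $\inf w_n\ge r>0$, and that $w'\in C^1([0,{\tast}]\times\mathbb{T}^d)$ solves the hyperbolic subsystem~\eqref{eq:hp.vw} with coefficient function $w_n$ classically. Fix $t_0\in[0,{\tast}]$. By weak lower semicontinuity of the norm, $\liminf_{t\to t_0}\|w'(t)\|_{H^s}\ge \|w'(t_0)\|_{H^s}$, so it remains to prove the reverse inequality $\limsup_{t\to t_0}\|w'(t)\|_{H^s}\le \|w'(t_0)\|_{H^s}$; together with weak continuity in the Hilbert space $H^s$, this forces $w'(t)\to w'(t_0)$ strongly in $H^s$, hence $w'\in C([0,{\tast}];H^s)$.

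To establish the $\limsup$ bound we run the $H^s$ energy estimate for $w'$ directly, treating $w_n$ as a given coefficient with the regularity stated above. For a multi-index $\alpha$ with $|\alpha|\le s$ set $w_\alpha':=\partial^\alpha w'$; differentiating the symmetrised hyperbolic equation as in the proof of Lemma~\ref{l:step1}\ref{it:lowerbd.time} and testing with $w_\alpha'$ gives, after summation over $|\alpha|\le s$,
\begin{align}
	\frac{\dd}{\dd t}\sum_{|\alpha|\le s}\int_{\mathbb{T}^d}(w_\alpha')^\tsf\mathbb{A}_0(w)\, w_\alpha'\,\dd x
	\le C\,\mu(t)\,\|w'(t)\|_{H^s}^2 + C\,\mu(t),
\end{align}
where $\mu(t):=1+\|\partial_t w_n(t)\|_{H^{s-1}}+\|\Delta w_n(t)\|_{H^{s-1}}+\|\nabla w_n(t)\|_{H^s}\in L^1(0,{\tast})$ by~\eqref{eq:subo.1}--\eqref{eq:subo.2} (recall $s>d/2+1$ so $H^{s-1}\hookrightarrow L^\infty$, controlling all the $L^\infty$-type factors that appeared in Lemma~\ref{l:step1}). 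Since $\mathbb{A}_0(w)$ is locally uniformly spd along the solution, the weighted quantity $E(t):=\sum_{|\alpha|\le s}\int_{\mathbb{T}^d}(w_\alpha')^\tsf\mathbb{A}_0(w)\,w_\alpha'\,\dd x$ is comparable to $\|w'(t)\|_{H^s}^2$. The subtle point — and I expect this to be the main obstacle — is that a priori $E$ is only known to satisfy this differential inequality in a weak/distributional sense, because $w'$ is not yet known to be strongly $H^s$-continuous (that is precisely what we are proving), so one cannot naively integrate. The standard remedy is to mollify in space: apply a Friedrichs mollifier $J_\varepsilon$ to the equation, run the estimate for $J_\varepsilon w'$, for which all terms are smooth and the commutators $[J_\varepsilon,\cdot]$ are handled by Friedrichs' lemma, integrate from $t_0$ to $t$, and then let $\varepsilon\to0$ using $w'\in L^\infty(0,{\tast};H^s)\cap C([0,{\tast}]\times\mathbb{T}^d)$; this yields $E(t)\le E(t_0)+\big|\int_{t_0}^t C\mu(\tau)(\|w'(\tau)\|_{H^s}^2+1)\,\dd\tau\big|$. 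Using the uniform bound $\|w'\|_{L^\infty(0,{\tast};H^s)}\le KR$ on the right and $\mu\in L^1$, the integral term tends to $0$ as $t\to t_0$, whence $\limsup_{t\to t_0}E(t)\le E(t_0)$. Finally, because $w^\ell\to w$ in $X^0_{{\tast}}$ and thus pointwise, and $\mathbb{A}_0$ is continuous, one checks $E(t_0^{\pm}):=\lim_{t\to t_0^{\pm}}\sum_{|\alpha|\le s}\int (w_\alpha')^\tsf\mathbb{A}_0(w(t_0))\,w_\alpha'\,\dd x$ agrees with $E(t_0)$ by dominated convergence (the $w$-dependence of $\mathbb{A}_0$ inside $E$ is continuous since $w_n$, and hence $\mathbb{A}_0(w)$, is $H^s$-continuous and the $w'$-factor converges weakly in $H^s$ with norms bounded). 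Combining $\liminf \|w'(t)\|_{H^s}^2\ge\|w'(t_0)\|_{H^s}^2$ with the comparability $E\sim\|w'\|_{H^s}^2$ and $\limsup E(t)\le E(t_0)$ gives $\lim_{t\to t_0}\|w'(t)\|_{H^s}=\|w'(t_0)\|_{H^s}$, and strong continuity follows; the one-sidedness at $t_0\in\{0,{\tast}\}$ needs no change. Uniqueness of the classical solution $w'$ of~\eqref{eq:hp.vw} with coefficient $w_n$, Lemma~\ref{l:uniq.basic}\ref{it:uniq.hp}, guarantees there is no ambiguity in identifying the limit.
\end{proof}
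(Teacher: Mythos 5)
Your overall strategy matches the paper's: upgrade the already-known weak continuity $w'\in C_w([0,\tast];H^s)$ to strong continuity by proving continuity of a suitable $\mathbb{A}_0$-weighted $H^s$ norm. The two proofs differ in how they obtain the key $\limsup$ inequality: the paper recalls the $\ell$-uniform $H^s$ estimate~\eqref{eq:energy.id.s} for the smooth iterates and passes it to the limit by weak lower semi-continuity, whereas you propose to run the energy estimate directly on the (non-smooth) limit $w'$, made rigorous by a Friedrichs mollification in space. Both routes are legitimate for the \emph{right}-continuity part, and yours has the pedagogical merit of avoiding a detour through the iteration sequence.

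However, there is a genuine gap in how you treat left-continuity, i.e.\ $t\uparrow t_0$. The differential inequality you derive, $\tfrac{\dd}{\dd t}E(t)\le C\mu(t)(\|w'(t)\|_{H^s}^2+1)$, is one-sided: integrating it from $t$ to $t_0$ for $t<t_0$ gives $E(t_0)\le E(t)+\int_t^{t_0}C\mu\,\dd\tau$, i.e.\ a \emph{lower} bound $E(t)\ge E(t_0)-\int_t^{t_0}C\mu\,\dd\tau$, not the upper bound $E(t)\le E(t_0)+\int_t^{t_0}C\mu\,\dd\tau$ that your absolute-value manipulation asserts. The inequality $E(t)\le E(t_0)+\bigl|\int_{t_0}^t C\mu(\tau)(\cdots)\,\dd\tau\bigr|$ is simply not a consequence of the forward Gronwall estimate when $t<t_0$. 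Combined with the reverse bound from weak lower semi-continuity this gives you continuity from the right of $E$ at $t_0$, but says nothing about $\limsup_{t\uparrow t_0}E(t)$. This is precisely why the paper treats left-continuity separately: it freezes the coefficient $w_n$ and applies the argument to the \emph{time-reversed} hyperbolic subsystem (which is again symmetric hyperbolic with the same regularity of the data), identifying the resulting solution with $w'$ via the uniqueness statement of Lemma~\ref{l:uniq.basic}~\ref{it:uniq.hp}. Your invocation of Lemma~\ref{l:uniq.basic}~\ref{it:uniq.hp} at the end gestures in this direction but does not actually carry out the time-reversal, so the left-continuity claim is unsupported. To fix it, split the proof into right-continuity (your argument, or the paper's passage-to-the-limit) and left-continuity proved by reversing time in~\eqref{eq:hp.vw} with $w_n$ fixed and applying the right-continuity result to that problem, using uniqueness to identify the solution with the time-reversal of $w'$.
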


\begin{proof}	
	In the proof we closely follow~\cite[Theorem~2.1 (b)]{Majda_1984}. 
Since we already know the weak continuity $w'\in C_w([0,{\tast}];H^s)$, 
it is sufficient to show the
continuity of the norm in the Hilbert space  $H^s$. 
We first show the continuity at $t=0$.
Equip $H^s(\mathbb{T}^d;\mathbb{R}^{n'})$ with the equivalent norm
\begin{align}
	\|v\|_{\normHs}:=\bigg(\sum_{|\alpha|\le s}\int_{\mathbb{T}^d}
	\partial_\alpha v^\tsf\mathbb{A}_0(w|_{t=0})
	\partial_\alpha v\dd x\bigg)^{1/2}.
\end{align}
It then suffices to show that 
\begin{align}\label{eq:conv.norm}
	\limsup_{t\downarrow 0}\|w'(t)\|_{\normHs}\le \|w'(0)\|_{\normHs}.
\end{align}
To prove this inequality, we recall estimate~\eqref{eq:energy.id.s}, valid for $w=w^{\ell}$ and $v=w^{\ell-1}$. Thanks to the uniform bounds in Lemma~\ref{l:step1}, 
the right-hand side of \eqref{eq:energy.id.s} can be estimated above by an $\ell$-independent function $f\in L^2([0,{\tast}];[0,\infty))$ so that 
\begin{align*}
	\sum_{|\alpha|\le s}&\int_{\mathbb{T}^d}\partial_\alpha (w^{\ell})'(t)^\tsf
	\mathbb{A}_0(w^{\ell-1}(t))\partial_\alpha (w^{\ell})'(t)\dd x \\
	&\le \sum_{|\alpha|\le s}\int_{\mathbb{T}^d}\partial_\alpha ((z^{\ell})')^\tsf
	\mathbb{A}_0(z^{\ell-1})\partial_\alpha (z^{\ell})'\dd x
	+\int_{0}^tf(\tau)\dd\tau
\end{align*}
for all $t\in[0,{\tast}]$ and $\ell\in\mathbb{N}_+$.

It follows from the convergence properties in Section~\ref{sssec:convergence} and a weak lower semi-continuity argument that, in the limit $\ell\to\infty$,
\begin{align}\label{eq:Hs.est}
	\sum_{|\alpha|\le s}&\int_{\mathbb{T}^d}\partial_\alpha w'(t)^\tsf
	\mathbb{A}_0(w(t))\partial_\alpha w'(t)\dd x \\
	&\le \sum_{|\alpha|\le s}\int_{\mathbb{T}^d}\partial_\alpha w'(0)^\tsf
	\mathbb{A}_0(w(0))\partial_\alpha w'(0)\dd x+\int_{0}^tf(\tau)\dd\tau.	
\end{align}
Recalling the weak continuity in~\eqref{eq:subo.1} and taking the $\limsup_{t\downarrow0}$, we
find~\eqref{eq:conv.norm}, where we used the fact that 
$\lim_{t\downarrow0}\|\mathbb{A}_0(w(0))-\mathbb{A}_0(w(t))\|_{C(\mathbb{T}^d)}=0$ to recover the 
$\normHs$-norm on the left-hand side of~\eqref{eq:conv.norm}. 

The right-continuity of $w'$ with values in $H^s$ at general $\widehat t\in[0,{\tast})$
 follows by applying the above result to the time-shifted problem with initial condition $w(\widehat t)\in H^s$ and exploiting the fact that 
$\tau\mapsto w(\widehat t+\tau)$ is the unique solution emanating from 
$w(\widehat t)$.
 
It remains to show the left-continuity of $w'$. To this end, we consider the hyperbolic subsystem with the fixed coefficient function $w_n$ and apply the above argument to the time-reversed problem.
More precisely, for establishing an analogue of the crucial $H^s$ energy estimate~\eqref{eq:Hs.est}, one possibility is to use a Picard iteration for the hyperbolic subsystem, while  approximating the coefficient function $w_n$ by the smooth functions $w_n^\ell$ from Lemma~\ref{l:convergence}. The uniqueness property in Lemma~\ref{l:uniq.basic}~\ref{it:uniq.hp} then implies that the limiting function coincides with the time-reversed version of $w'$.
\end{proof}

To complete the proof of Theorem~\ref{thm:ex.for.symH}, it remains to show the regularity
\begin{align}\label{eq:reg.w1C21}
	\partial_tw_n,\nabla^2w_n\in C_\loc((0,{\tast}]\times\mathbb{T}^d).
\end{align}
To this end, let $\beta\in(0,\min\{s-d/2-1,1\})$.
The regularity $w'\in C([0,{\tast}];H^s)$ and the Sobolev embedding imply that $w'\in C([0,{\tast}]; C^{1,\beta}(\mathbb{T}^d))$. Combined with the space-uniform temporal Lipschitz regularity of $w'$,
i.e.\ $\partial_tw'\in C([0,{\tast}]\times\mathbb{T}^d)$, we infer 
from \cite[Chapter~II, Lemma~3.1]{LSU_1968} a temporal H\"older regularity 
of the gradient:
\begin{align}\label{eq:time-reg.gradw'}
	\nabla w'\in C^{\beta/(1+\beta)}([0,{\tast}];C(\mathbb{T}^d) ).
\end{align}

Now, we can use classical regularity results for quasilinear parabolic equations in divergence form, where $w'$ is viewed as a given function: Thanks to~\eqref{eq:time-reg.gradw'}, $\nabla w'$ satisfies a space-time H\"older condition, which makes Theorem 5.4  in~\cite[Chapter~V]{LSU_1968} accessible and gives us an  interior space-time H\"older a priori estimate for $\partial_tw_n$ and $\nabla^2w_n$.
To conclude~\eqref{eq:reg.w1C21} from the a priori control, we approximate $w'$ by smooth functions whose spatial gradient is uniformly bounded in some space-time H\"older norm, and exploit the uniqueness of regular solutions to the parabolic equation in $w_n$ (with $w'$ acting as a fixed parameter). As a consequence, 
$\partial_tw_n$ and $\nabla^2w_n$ satisfy a space-time H\"older condition
away from $t=0$, which entails~\eqref{eq:reg.w1C21}.

\subsection{Original variables}\label{ssec:proof.original.variables}

We now conclude the existence of classical solutions for the degenerate cross-diffusion system~\eqref{eq:sys.Darcy-rang-1}.

\begin{proof}[Proof of Theorem~\ref{thm:locex.crossdiffusion}]$ $
	Without loss of generality,  after rescaling time, relabelling components and rescaling $u_i\mapsto a_iu_i$, we can assume that $k_1\le \ldots \le k_{n'}\le k_n=1$ and $a_i=1$ for all $i$. 
	It then suffices to consider the following two cases.

{\em Case 1:} Let $k_1\le\ldots\le k_{n'}< k_n$.
Then the assertion is a consequence of Theorems~\ref{thm:normal.form} and 
\ref{thm:ex.for.symH}. 
The time of existence $T$ can be bounded below by a positive constant that depends on the datum $\uin$ only through $\|\win\|_{H^s}$ and $\min_{\mathbb{T}^d}\win_n$, where $\win=\Phi(\uin)$ with $\Phi$ denoting a diffeomorphism as in Theorem~\ref{thm:normal.form} with the property that
$w_n=\Phi_n(u)=\eqsp(u)$.

{\em Case 2:} There exists a minimal $m\in\{1,\dots,n'\}$ such that $k_i=1$ 
for $i=m,\dots,n$. In this case, we define $\widetilde u_i:=u_i$ for 
$1\le i\le m-1$ and $\widetilde u_{m}:=u_{m}+\dots+u_n$.
The system formulated in terms of  $\widetilde u$ satisfies the hypotheses of 
Case~1 with $n$ replaced by $m$, which provides a local strong solution $\widetilde u$ and in particular a velocity field 
${\bf \widetilde v}=-\nabla\sum_{j=1}^m\widetilde u_j$.
Subsequently, we determine the unique solutions to the linear continuity equations 
for $u_{m},\dots,u_n$ with the fixed velocity field ${\bf\widetilde v}$,  
so that   $u=(\widetilde u_1,\dots,\widetilde u_{m},u_{m+1},\dots,u_{n})$ 
is the desired classical solution to~\eqref{eq:sys.Darcy-rang-1}.
\end{proof}

\section{The general system (\ref{eq0})	with incomplete diffusion}\label{sec:extension}

In this section, we turn to the general PDE system~\eqref{eq0} 
(see page~\pageref{eq0}) for symmetric positive semidefinite  matrices $B = (b_{ij})\in\mathbb{R}^{n\times n}$ with rank $\rk \in \{1, \ldots,n\}$.
In particular, we aim to bring equations~\eqref{eq0} into a normal form that makes them accessible 
to the energy methods from Section~\ref{sec:loc.ex}.

In the following, we use the notation 
$D(v):=\diag(v_1,\dots,v_n)\in\mathbb{R}^{n\times n}$ 
for $v=(v_i)\in \mathbb{R}^n$.
Moreover, we use the convention $f(v):=(f(v_1),\dots,f(v_n))^\tsf$ for
a function $f:\mathbb{R}\to\mathbb{R}$ and a vector $v\in\mathbb{R}^n$.

\subsection{Preliminary consideration: the full-rank case}

We first note that for symmetric strongly parabolic PDE systems, local strong solutions to the Cauchy problem in suitable function spaces can be obtained by means of basic energy estimates in the spirit of Section~\ref{sec:loc.ex}. More generally, initial-boundary value problems may for instance be treated using Schauder-type estimates as in~\cite{GM_1987}.

System~\eqref{eq0} can be symmetrised in several ways. 
For instance, following Kawashima and Shizuta~\cite{KS_1988}, we may consider a change to entropy variables $\widetilde v_i= \log u_i$ for the strictly convex Shannon entropy. In this way, we obtain the symmetric system
\begin{align*}
	D(e^{\widetilde v}) \partial_t \widetilde v =\divv \big(D(e^{\widetilde v}) B D(e^{\widetilde v}) \nabla \widetilde v\big)  , 
\end{align*}
which is strictly parabolic if and only if $\rank B=n$.  
Likewise, we may use the quadratic Rao entropy $H(u)=\frac{1}{2}u^\tsf Bu$, which is strictly convex if $B$ possesses full rank. In this case, we find $B^{-1}\partial_tv = \divv (D(u)\nabla v),$ $u:=B^{-1}v$.
Of course, the classical inversion of $B$ is only possible for $\rank B=n$.
Alternatively, equation~\eqref{eq0} may be put in symmetric form directly by means of the symmetriser $B$ giving
$	B\partial_tu = \divv(BD(u)B\nabla u)$, where we used the fact that $B$ is constant.

In order to find a symmetrisation suitable to conveniently treat the rank-deficient case, we use again the fact that $B$ is constant, but  somewhat modify the previous alternative.
 To this end, denote by $\xi^{1}, \ldots, \xi^n$ an orthonormal basis of eigenvectors of $B$ with the corresponding vector of positive eigenvalues 
$\lambda=(\lambda_1,\ldots,\lambda_n)$ and define $w_k := \xi^k \cdot u$ 
for $k = 1,\ldots,n$. With $\OO_{ij} = \xi^{i}_j$ for $i,j=1,\ldots,n$,
we obtain $B\OO^\tsf=\OO^\tsf D(\lambda)$ and $u=\OO^\tsf w$. Hence, we can write
\eqref{eq0}, i.e.\ $\partial_t u=\divv(D(u)B\nabla u)$, in terms of 
the variable $w=(w_1,\ldots,w_n)$ as
$$
  D(\lambda)\partial_t w = D(\lambda)\OO\divv(D(u)B\OO^\tsf\nabla w)
	= \divv\big(D(\lambda)\OO D(u) \OO^\tsf D(\lambda)\nabla w\big).
$$
If $B$ possesses rank $n$, these equations define a symmetric strongly parabolic system for $w$ as long as $u$ remains positive componentwise.
We further develop this approach in the next paragraph to derive a normal form 
in the case $\rank B= \rk$ with $1\leq \rk \leq n$.

\subsection{Normal form of symmetric hyperbolic--parabolic type}

We suppose that $\rank B=\rk\in\{1,\dots,n-1\}$.
To ease the notation, we partition the set of indices $\{1,\dots,n\}$ into
$\irm=\{1,\dots,n-\rk\}$ and $\iirm=\{n-\rk+1,\dots,n\}$. 
We choose an orthonormal basis $\xi^1,\ldots,\xi^n$ of eigenvectors of $B$ with corresponding eigenvalues
$\lambda_i=0$ for $i\in\irm$ and $\lambda_i>0$ for $i\in\iirm$, and further introduce the orthogonal matrix $\OO_{ij} = \xi^i_j$ for $i,j = 1,\ldots,n$ and the rectangular blocks
\begin{align} 
	\QQ_{ij} := \xi^{i}_{j}\quad \text{for } i\in \irm,\quad
	\PP_{ij} :=  \xi^i_{j}\quad \text{for } i \in\iirm,\ j = 1,\dots,n .
	\label{eq:defQ}
\end{align}
For later reference, we note the elementary matrix identities
\begin{equation}\label{eq:matrix.id}
  \QQ\QQ^\tsf = \mathbb{I}_{n-\rk},\quad
	\PP\PP^\tsf = \mathbb{I}_\rk, \quad
	\QQ^\tsf\QQ+\PP^\tsf\PP = \mathbb{I}_n, \quad
	\PP \QQ^\tsf=0,\quad \QQ\PP^\tsf=0.
\end{equation}
Left-multiplying system~\eqref{eq0} by $(\xi^k)^\tsf$, $k \in\iirm$, 
the functions $w_{n-\rk+1}, $ $\dots, w_n$ defined via
 $w_k := \xi^k \cdot u$ satisfy the parabolic system 
\begin{align}\label{parab11}
	 &\lambda_k \partial_t w_k = \divv\bigg(\sum_{\ell\in\iirm} \alpha_{k\ell}(u) 
	 \nabla w_{\ell}\bigg), \quad \text{where} \\ 
	 &\alpha_{k\ell}(u)= D(u) (\lambda_{k}\xi^k) \cdot (\lambda_{\ell}\xi^{\ell})
	\quad\text{ for all } k,\ell\in\iirm.
\end{align}
Moreover, multiplying~\eqref{eq0} by $\xi^{k}_i/u_i$ for $k \in \irm$, summing over $i=1,\dots,n$ and using the fact that $\xi^k$ is in the kernel of $B$ yields for $w_k := \xi^{k} \cdot \log u$ the first-order equation
\begin{align}\label{hyper}
	\partial_t w_k = \sum_{i,j=1}^n \xi^k_i  b_{ij} \nabla \log u_i \cdot \nabla u_j \quad \text{ for all } k \in\irm.
\end{align}
This leads us to propose the change of variables $\Phi(u) = w$ defined via
\begin{align}\label{eq:transf.gen}
	w_k := \begin{cases}		\xi^k \cdot \log u  & \text{ for }  k \in\irm,
		\\ \xi^k \cdot u &  \text{ for } k \in\iirm .
	\end{cases}
\end{align}
With $\wi := (w_1, \ldots,w_{n-\rk})$ and $\wii:= (w_{n-\rk+1},\ldots,w_n)$, we then have $\wi = \QQ \log u$ and $\wii = \PP u$.
We next prove a diffeomorphism property for this change of variables.

\begin{lemma}\label{l:diffeo.gen}
	The map $\Phi$ is smoothly invertible between $\mathbb{R}_+^n$ and  $\mathcal{D} := \mathbb{R}^{n-\rk}\times  \PP\mathbb{R}_+^n$.
\end{lemma}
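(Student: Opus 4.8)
The plan is to show that $\Phi$ is a bijection from $\mathbb{R}_+^n$ onto $\mathcal D=\mathbb{R}^{n-\rk}\times\PP\mathbb{R}_+^n$ with smooth inverse, the smoothness of $\Phi$ itself being obvious from the formula \eqref{eq:transf.gen}. The key structural observation is that the hyperbolic components $\wi=\QQ\log u$ and the parabolic components $\wii=\PP u$ use \emph{different} nonlinear maps of $u$, so the argument cannot be a plain linear-algebra inversion; instead I would exploit the orthogonal splitting $\mathbb{R}^n=\ker B\oplus(\ker B)^\perp$ encoded in \eqref{eq:matrix.id}, together with a fixed-point/monotonicity argument in the $\ker B$-direction reminiscent of the proof of Lemma~\ref{l:diffeo}.

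Concretely, fix $w=(\wi,\wii)\in\mathcal D$, so $\wii=\PP\bar u$ for some $\bar u\in\mathbb{R}_+^n$. I want to solve $\QQ\log u=\wi$, $\PP u=\wii$ for $u\in\mathbb{R}_+^n$. Decompose $\log u = \QQ^\tsf\wi + \PP^\tsf y$ for a unique $y\in\mathbb{R}^\rk$ (using $\QQ^\tsf\QQ+\PP^\tsf\PP=\mathbb{I}_n$ and $\PP\QQ^\tsf=0$, $\QQ\QQ^\tsf=\mathbb{I}_{n-\rk}$): any vector of $\mathbb{R}^n$ splits this way, and the constraint $\QQ\log u=\wi$ is exactly the statement that the $\QQ$-component is the prescribed $\wi$. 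Writing $u=u(y):=\exp(\QQ^\tsf\wi+\PP^\tsf y)$ componentwise, the remaining equation becomes
\begin{align}
	G(y):=\PP\,\exp\!\big(\QQ^\tsf\wi+\PP^\tsf y\big)=\wii .
\end{align}
The Jacobian of $G$ is $\mathrm{D}G(y)=\PP\,D(u(y))\,\PP^\tsf$, which is symmetric positive definite for every $y$ since $u(y)\in\mathbb{R}_+^n$ and $\PP$ has full row rank $\rk$; moreover $G$ is a gradient map, $G=\nabla_y\Xi$ with $\Xi(y)=\sum_i\exp\big((\QQ^\tsf\wi+\PP^\tsf y)_i\big)$, and $\Xi$ is strictly convex. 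One checks that $\Xi(y)\to+\infty$ as $|y|\to\infty$ (because $\PP^\tsf y$ has at least one component tending to $+\infty$ along any ray, the other terms staying nonnegative), so $\Xi$ is coercive; strict convexity then yields a unique minimiser $y=y(w)$, which is the unique solution of $G(y)=\wii$. This produces a unique $u=\Psi(w)\in\mathbb{R}_+^n$ with $\Phi(u)=w$, establishing bijectivity; smoothness of $\Psi$ follows from the inverse function theorem once we know $\mathrm{D}\Phi(u)$ is invertible, which is immediate since its block structure (rows $\QQ D(1/u)$ and $\PP$) is invertible by the same positive-definiteness argument, or alternatively by differentiating the identity $G(y(w))=\wii$.

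The main obstacle is the coercivity/surjectivity step: one must verify that for every admissible $\wi$ and every $\wii$ in the correct cone $\PP\mathbb{R}_+^n$ the equation $G(y)=\wii$ is solvable, i.e. that the range of $G$ is exactly $\PP\mathbb{R}_+^n$. Surjectivity onto $\PP\mathbb{R}_+^n$ needs the observation that $G(y)=\PP u(y)$ with $u(y)\in\mathbb{R}_+^n$, so the range is contained in $\PP\mathbb{R}_+^n$; the reverse inclusion follows from the coercivity of $\Xi$ combined with the fact that the relative interior of $\PP\mathbb{R}_+^n$ is attained and the boundary behaviour forces the minimiser to stay interior — this is where a little care with the geometry of the cone $\PP\mathbb{R}_+^n$ (which is open in $\mathbb{R}^\rk$, being the image of the open orthant under the surjective linear map $\PP$) is required. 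Once surjectivity and injectivity are in hand, the smooth-inverse claim is routine, so I expect the write-up to be short, with essentially all the content in the convex-analysis argument for $G$.
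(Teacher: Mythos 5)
Your overall strategy is the same as the paper's: split $\log u$ along the orthogonal decomposition $\mathbb{R}^n=\ker B\oplus(\ker B)^\perp$, parametrise the $(\ker B)^\perp$-part by $y\in\mathbb{R}^\rk$, and reduce the inversion to a strictly convex scalar problem in $y$ whose Hessian $\PP D(u)\PP^\tsf$ is positive definite. The paper does exactly this (with $X$ in place of your $y$), so your structure is right. However, the convex-analysis step contains a genuine error.

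You claim that $G(y)=\PP\exp(\QQ^\tsf\wi+\PP^\tsf y)$ is the gradient of $\Xi(y)=\sum_i\exp\big((\QQ^\tsf\wi+\PP^\tsf y)_i\big)$ and that the minimiser of $\Xi$ solves $G(y)=\wii$. But a critical point of $\Xi$ solves $G(y)=0$, not $G(y)=\wii$. The function you actually need to minimise is
\begin{align}
	y\longmapsto \Xi(y)-\wii\cdot y ,
\end{align}
which is what the paper writes (their $G(X;w)=\exp(\QQ^\tsf w_\irm+\PP^\tsf X)\cdot\mathbf{1}-w_\iirm\cdot X$). Dropping the linear term also breaks your coercivity argument, and not in a reparable way by small fixes: your claim that "$\PP^\tsf y$ has at least one component tending to $+\infty$ along any ray" is false in general. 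For instance take $\rk=1$ and $B=\mathbf{1}\mathbf{1}^\tsf$, so that $\PP=\pm n^{-1/2}\mathbf{1}^\tsf$; along $y\to-\infty$ every component of $\PP^\tsf y$ tends to $-\infty$, and $\Xi(y)\to0$. Hence $\Xi$ alone is not coercive. Coercivity of the corrected functional $\Xi(y)-\wii\cdot y$ is where the hypothesis $\wii\in\PP\mathbb{R}_+^n$ enters: writing $\wii=\PP\zeta$ with $\zeta$ strictly positive (which you do introduce but then never use), one has $-\wii\cdot y=-\zeta\cdot\PP^\tsf y$, and the case distinction in the paper (exponential term dominates when $\max_i(\PP^\tsf y)_i\to+\infty$; linear term dominates when $\max_i(\PP^\tsf y)_i$ stays bounded, since then $\min_i(\PP^\tsf y)_i\to-\infty$ and $\min_i\zeta_i>0$) is exactly the missing step. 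Once you restore the linear term and redo coercivity along those lines, the rest of your write-up (injectivity from strict convexity, smoothness via the invertible $\partial_y G=\PP D(u)\PP^\tsf$) matches the paper.
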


\begin{proof}
It follows from definition~\eqref{eq:transf.gen} and the identity
$ \log u_i=(\OO^\tsf \OO  \log u)_i $ that
\begin{align}
	& \log u_i = \sum_{k=1}^n\log u\cdot\xi^k\xi_i^k
	=  \sum_{k\in\irm} w_k \xi^k_i + \sum_{k\in\iirm} \log u \cdot 
	\xi^k \xi^k_i\quad \text{for } i=1,\ldots, n, \\
  \label{eq:implicit}
	& w_{\ell} = \sum_{i=1}^n\xi^\ell_i u_i
	= \sum_{i=1}^n \xi^{\ell}_i \exp\bigg(\sum_{k\in\irm} w_k\xi^k_i 
	+ \sum_{k\in\iirm} \log u \cdot \xi^k\xi^k_i \Big) \quad\text{for }\ell \in\iirm.
	\end{align}
We assert that for arbitrary $w \in \mathcal{D}$, the last line defines the $\rk$ components $\log u \cdot \xi^j$,  $j \in\iirm$, implicitly as functions $X_j(w)$. Indeed, we can write the algebraic system \eqref{eq:implicit} in the form $F(X;w_1,\ldots,w_n) = 0$, where
	\begin{align*}
		F_{\ell}(X; w) := \sum_{i=1}^n \xi^{\ell}_i \exp\bigg( \sum_{k\in\irm} w_k \xi^k_i +\sum_{k\in\iirm} X_k \xi^k_i \bigg) -w_{\ell} ,\quad \ell \in\iirm .
	\end{align*}
	It is readily seen that $F(X;w) = \partial_{X} G(X;w)$ for $G$ given by
\begin{align*}
	G(X; \, w) := \exp\big(\QQ^{\sf T} w_\irm + \PP^{\sf T} X\big) \cdot {\bf 1} 
	- w_{\iirm} \cdot  X,
\end{align*}
where we abbreviated ${\bf 1}:=(1,\dots,1)^\tsf$.
	 Moreover, we compute for $j,\ell\in\iirm$,
	\begin{align*}
		\partial_{X_{j}} F_{\ell} = \sum_{i=1}^n \xi^{\ell}_i \xi^{j}_i 
		\exp\bigg( \sum_{k\in\irm} w_k  \xi^k_i + \sum_{k\in\iirm} X_k \xi^k_i \bigg)
		= \sum_{i=1}^n \xi^{\ell}_i \xi^{j}_i u_i ,
	\end{align*}
that is $\partial_X F = \PP D(u) \PP^{\sf T}$.
	Thus, $\partial_X F$ is symmetric positive definite, and $X \mapsto G(X; w)$ is strictly convex on $\mathbb{R}^{\rk}$. The equations $F(X;w) = 0$ possess a unique solution $X = X(w)$ if and only if $X$ is the unique global minimiser of $G(\cdot;  w)$. To prove its existence, a sufficient condition is that $G(X;w) \rightarrow +\infty$ for $|X| \rightarrow +\infty$. To establish this property, recall that for $w_{\iirm} \in \PP(\mathbb{R}_+)^n$, there exists
	$\zeta \in \mathbb{R}_+^n$ such that $w_{\iirm} = \PP \zeta$, and then
		\begin{align*}
	G(X;w) := \exp\big(\QQ^{\sf T} w_\irm + \PP^{\sf T} X\big) \cdot {\bf 1} 
	- \zeta \cdot \PP^{\sf T}   X .
	\end{align*}
	For $|X| \rightarrow +\infty$, we must also have $|\PP^{\sf T} X| \rightarrow +\infty$ and we distinguish two cases. If $\max_{i=1,\ldots,n} (\PP^{\sf T} X)_i \rightarrow +\infty$, the exponential term dominates and $G(X; w)$ tends to infinity. 
	Otherwise, if $\max_{i=1,\ldots,n} (\PP^{\sf T} X)_i$ is bounded above, then $\min_{i=1,\ldots,n} (\PP^{\sf T} X)_i \rightarrow -\infty$ as $|X|\to+\infty$.  The exponential term is bounded, but since $\min_{i=1,\ldots,n} \zeta_i > 0$, we see that $- \zeta \cdot \PP^{\sf T}   X \rightarrow +\infty$, and $G(X;w)$ again tends to infinity.
	Thus, we may recover $u \in \mathbb{R}_+^n$ from $w \in \mathcal{D}$ via
	\begin{align}\label{psirankp}
		u_i =  \exp\bigg( \sum_{k\in\irm} w_k  \xi^k_i +\sum_{k\in\iirm} X_k(w) \xi^k_i \Big) =: \Psi_i(w) , \quad i = 1,\ldots,n.
	\end{align}
This completes the proof.
\end{proof}
	
For later reference, we note the formula
	\begin{align}\label{eq:dxF.inv}
		( \partial_X F)^{-1} = \PP [D(u)]^{-1} \PP^{\sf T} - \PP [D(u)]^{-1} \QQ^\tsf (\QQ D(u)^{-1}\QQ^{\sf T})^{-1}\QQ D(u)^{-1}\PP^{\sf T},
	\end{align}
which can be verified using~\eqref{eq:matrix.id},	as well as the identity
\begin{align}\label{compute}
	\partial_{w_m} X(w) =  - ( \partial_X F)^{-1} \PP D(u) \xi^m,
	\quad m\in\irm,
\end{align}
which follows from $0=\partial_{w_m}(F(X;w))=\partial_X F\partial_{w_m}X
+\partial_{w_m}F = \partial_X F\partial_{w_m}X+\PP D(u)\xi^m$.

We are now in a position to derive an appropriate normal form for system~\eqref{eq0}. We continue to denote by $\Psi$ the inverse of the diffeomorphism $\Phi$.

\begin{proposition}[Normal form]\label{prop:normalform.B}
A vector $u = (u_1, \ldots,u_n)$ of 
positive functions is a classical solution to \eqref{eq0} if and only if the transformed variables $w = (\wi, \wii)$ defined via $\wi = \QQ \log u$  and $\wii = \PP  u$ satisfy, in the classical sense,
	\mathtoolsset{showonlyrefs=false}
	\begin{subequations}\label{eq:nf.B}
	\begin{align}\label{eq:nf.B.h}
		\mathbb{A}_0^\irm(w)\partial_t \wi &= \sum_{\nn=1}^d 
		\mathbb{A}^\irm_1(w,\partial_{x_\nn} \wii)\partial_{x_\nn}\wi 
		+ f^\irm(w,\nabla \wii), \\
		D^\iirm(\lambda)\partial_t \wii 
		&= \divv(\mathbb{A}_1^{\iirm}(w) \nabla \wii) ,
		\label{eq:nf.B.p}
	\end{align}
\end{subequations}
\mathtoolsset{showonlyrefs}%
where 
$$
  \mathbb{A}_1^{\iirm}(w) 
	= D^\iirm(\lambda)\PP D(\Psi(w)) \PP^{\sf T} D^{\iirm}(\lambda),
$$
$D^\iirm(\lambda) = \diag(\lambda_{n-\rk+1},\ldots,\lambda_n)$ and
$D(\Psi(w)) := {\rm diag}(\Psi_1(w), \ldots,\Psi_n(w))$.
The maps
$$
  \mathbb{A}_0^\irm:\mathcal{D} \to \mathbb{R}^{(n-\rk) \times 
	(n-\rk)}_{\rm spd}, \quad 
	\mathbb{A}^\irm_1: \mathcal{D} \times \mathbb{R}^{\rk} \to \mathbb{R}^{(n-\rk) 
	\times (n-\rk)}_{\rm sym}, \quad
	f^\irm:\mathcal{D} \times \mathbb{R}^{\rk} \to \mathbb{R}^{n-\rk}
$$ 
are smooth, and $\mathbb{A}^\irm_1$ is linear in the second argument. 
More specifically, 
	\begin{align*}
		\mathbb{A}_0^\irm(w) &= \big(\QQ D(\Psi(w))^{-1} \QQ^{\sf T}\big)^{-1}, \\
		\mathbb{A}^\irm_1(w,\partial_{x_\nn} \wii)  
		&= \QQ\Sigma(\Psi(w))D(\Psi (w))^{-1}D[\PP^{\sf T}\lambda\partial_{x_\nn}\wii]
		\Sigma(\Psi(w))\QQ^{\sf T} ,
	\end{align*}
where $D[\PP^{\sf T} \lambda\partial_{x_\nn} \wii]$ is the diagonal matrix with diagonal entries given by the vector 
	$\PP^{\sf T}D^\iirm(\lambda)\partial_{x_\nn} \wii$. Moreover, 
	\begin{align}
	\Sigma(\Psi(w)) := \QQ^{\sf T}\big(\QQ D(\Psi(w))^{-1}\QQ^{\sf T}\big)^{-1}\QQ ,
	\quad f^\irm(w,\nabla w_\iirm) = \mathbb{A}_0^\irm(w)\ggo(w, \nabla \wii),
	\end{align} 
and $\ggo=(\ggo_1,\ldots,\ggo_{n-\rk})$ is defined in~\eqref{eq:gnlot}.
\end{proposition}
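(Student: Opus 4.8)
The strategy is a direct computation: starting from \eqref{eq0} written compactly as $\partial_t u = \divv(D(u) B \nabla u)$, apply the linear maps $\QQ$ and $\PP$ and massage the results into the claimed forms, using the eigen-decomposition $B = \OO^{\sf T} D(\lambda)\OO$, the orthogonality relations \eqref{eq:matrix.id}, and the implicit-function machinery (\eqref{eq:dxF.inv}, \eqref{compute}) already established in the proof of Lemma~\ref{l:diffeo.gen}. First I would treat the parabolic block: since $B\xi^k = \lambda_k \xi^k$ for $k\in\iirm$ and $B$ is constant, left-multiplying \eqref{eq0} by $(\xi^k)^{\sf T}$ gives $\lambda_k \partial_t w_k = \divv((\xi^k)^{\sf T} D(u) B \nabla u)$; writing $\nabla u = \OO^{\sf T}\nabla w = \QQ^{\sf T}\nabla\wi + \PP^{\sf T}\nabla\wii$ and using $B\QQ^{\sf T} = 0$, $B\PP^{\sf T} = \PP^{\sf T} D^\iirm(\lambda)$, one obtains \eqref{eq:nf.B.p} with $\mathbb{A}_1^\iirm(w) = D^\iirm(\lambda)\PP D(\Psi(w))\PP^{\sf T} D^\iirm(\lambda)$, manifestly symmetric and positive definite on positive states (since $\PP$ has full row rank and $D^\iirm(\lambda)$ is invertible).

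The hyperbolic block is where the actual work lies. Here I would start from \eqref{hyper}, namely $\partial_t w_k = \sum_{i,j} \xi^k_i b_{ij} \nabla\log u_i \cdot \nabla u_j$ for $k\in\irm$, which in matrix form reads $\partial_t \wi = (\nabla\log u)^{\sf T} \diamond\, (\QQ \,\cdot\, B \nabla u)$, i.e.\ a sum over $\nn$ of terms $\partial_{x_\nn}(\log u)$ contracted with $\QQ B \partial_{x_\nn} u$. The key is to re-express $\nabla\log u$ and $\nabla u$ in terms of $\nabla\wi$ and $\nabla\wii$. From $\log u = \QQ^{\sf T}\wi + \PP^{\sf T} X(w)$ (cf.\ \eqref{psirankp}) we get $\nabla\log u = \QQ^{\sf T}\nabla\wi + \PP^{\sf T}\mathrm{D}X(w)\nabla w$, and then $\nabla u = D(u)\nabla\log u$. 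The differential $\mathrm{D}X(w)$ splits into $\partial_{w_m}X$ for $m\in\irm$ (given by \eqref{compute}) and $\partial_{w_m}X$ for $m\in\iirm$ (obtained analogously by differentiating $F(X;w)=0$ in the $\wii$-directions, yielding $-(\partial_X F)^{-1}\PP^{\sf T}$ after using $\partial_{w_m}F_\ell = \xi^m_\ell\,(\PP D(u)\PP^{\sf T})_{\ell m}$-type identities). Substituting these, expanding $B\nabla u = \PP^{\sf T} D^\iirm(\lambda)\PP D(u)\nabla\log u$, and collecting the $\nabla\wi$-linear part versus the $\nabla\wii$-quadratic part produces the split into $\sum_\nn \mathbb{A}_1^\irm(w,\partial_{x_\nn}\wii)\partial_{x_\nn}\wi$ and $f^\irm(w,\nabla\wii)$. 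Multiplying through by $\mathbb{A}_0^\irm(w) = (\QQ D(\Psi(w))^{-1}\QQ^{\sf T})^{-1}$ is what normalises the time derivative: one must check that $\mathbb{A}_0^\irm(w)$ is exactly the matrix converting the raw left-hand side (which after the substitutions carries a factor $\QQ D(u)\QQ^{\sf T}$-like object coupled through $\partial_X F$) into $\partial_t\wi$. Throughout, the algebraic identities \eqref{eq:matrix.id} and \eqref{eq:dxF.inv} are used to simplify $(\partial_X F)^{-1} = \PP D(u)^{-1}\PP^{\sf T} - \PP D(u)^{-1}\QQ^{\sf T}(\QQ D(u)^{-1}\QQ^{\sf T})^{-1}\QQ D(u)^{-1}\PP^{\sf T}$, and the combination $\mathbb{I}_n - \PP^{\sf T}\PP = \QQ^{\sf T}\QQ$ repeatedly appears, ultimately producing the operator $\Sigma(\Psi(w)) = \QQ^{\sf T}(\QQ D(\Psi(w))^{-1}\QQ^{\sf T})^{-1}\QQ$ in the stated formula for $\mathbb{A}_1^\irm$.

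Finally, I would record the smoothness and symmetry claims: $\mathbb{A}_0^\irm$ is smooth and symmetric positive definite because $D(\Psi(w))^{-1}$ is smooth and positive on $\mathcal{D}$ (by Lemma~\ref{l:diffeo.gen}, $\Psi(w)\in\mathbb{R}_+^n$) and $\QQ$ has full row rank, so $\QQ D(\Psi(w))^{-1}\QQ^{\sf T}$ is symmetric positive definite and hence invertibly so; $\mathbb{A}_1^\irm(w,\cdot)$ is linear in the diagonal vector $\PP^{\sf T} D^\iirm(\lambda)\partial_{x_\nn}\wii$ by inspection of its formula, and symmetric because it is of the sandwich form $M^{\sf T} D[\cdot] M$ with $M = D(\Psi(w))^{-1/2}\Sigma(\Psi(w))\QQ^{\sf T}$-type structure (more precisely one checks $\Sigma = \Sigma^{\sf T}$ and $\QQ\Sigma = (\QQ D(\Psi(w))^{-1}\QQ^{\sf T})^{-1}\QQ$ reorganises the product symmetrically); and $f^\irm = \mathbb{A}_0^\irm \ggo$ with $\ggo$ quadratic in $\nabla\wii$ since it arises from the $\PP^{\sf T}\mathrm{D}X\,\nabla\wii$ contributions contracted against each other and against $B\PP^{\sf T}\nabla\wii$, each factor carrying one $\nabla\wii$. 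The definition of $\ggo$ referenced as \eqref{eq:gnlot} is simply the explicit collection of these lower-order (in $\wi$) terms. \textbf{Main obstacle.} The bookkeeping in expanding $\nabla\log u$, $\nabla u$, and $\mathrm{D}X(w)$ and then verifying that the prefactor collapses exactly to $(\QQ D(\Psi(w))^{-1}\QQ^{\sf T})^{-1}$ and that the $\nabla\wi$-coefficient is precisely $\QQ\Sigma D(\Psi(w))^{-1} D[\PP^{\sf T}\lambda\partial_{x_\nn}\wii]\Sigma\QQ^{\sf T}$ is the delicate part — it is a finite but intricate linear-algebra identity that hinges on repeated use of \eqref{eq:matrix.id} and \eqref{eq:dxF.inv}, and on recognising the Schur-complement structure hidden in $(\partial_X F)^{-1}$.
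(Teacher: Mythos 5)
Your overall route is the same as the paper's: derive the first-order equations for $\wi$ and the second-order equations for $\wii$ directly from \eqref{eq0}, re-express the gradients via the implicit-function machinery for $X(w)$ and $(\partial_X F)^{-1}=(\PP D(u)\PP^\tsf)^{-1}$ established in Lemma~\ref{l:diffeo.gen}, and identify the symmetriser $\mathbb{A}_0^\irm(w)=(\QQ D(\Psi(w))^{-1}\QQ^\tsf)^{-1}$ with the help of \eqref{eq:matrix.id} and \eqref{eq:dxF.inv}. Two slips, however, should be repaired before the argument is sound.

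For the parabolic block you assert $\nabla u=\OO^\tsf\nabla w=\QQ^\tsf\nabla\wi+\PP^\tsf\nabla\wii$. This identity is false here: since $\wi=\QQ\log u$ is nonlinear in $u$, one has $\QQ\nabla u\neq\nabla\wi$; your relation holds only in the full-rank preliminary discussion where $w=\OO u$ is linear. What saves the conclusion is the combination of $\PP\nabla u=\nabla(\PP u)=\nabla\wii$ (true because $\PP$ is constant) with $B\QQ^\tsf=0$, so that $B\nabla u=B\PP^\tsf\PP\nabla u=\PP^\tsf D^\iirm(\lambda)\nabla\wii$. Equivalently, and this is the paper's route, $\mu=Bu=\sum_{\ell\in\iirm}\lambda_\ell w_\ell\xi^\ell$, hence $\nabla\mu_i$ depends only on $\nabla\wii$, and inserting into $\partial_t w_k=\divv\big(\sum_i\xi_i^k u_i\nabla\mu_i\big)$ for $k\in\iirm$ yields \eqref{parab11} and thence \eqref{eq:nf.B.p}.

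The symmetrisation step is mischaracterised. You describe $\mathbb{A}_0^\irm$ as the matrix converting a raw prefactor (some $\QQ D(u)\QQ^\tsf$-type object coupled through $\partial_X F$) on the left of $\partial_t\wi$ into the identity, and place the main obstacle there. In fact, multiplying \eqref{eq0} by $\xi^k_i/u_i$ and summing over $i$ already produces $\partial_t w_k$ with \emph{no} prefactor — this is precisely \eqref{hyper}, obtained because $\sum_i\xi^k_i\Delta\mu_i=\Delta((B\xi^k)\cdot u)=0$ for $k\in\irm$. One arrives at $\partial_t\wi = Z(w,\nabla\wii)\nabla\wi + g$ with a coefficient matrix $Z$ that is not symmetric; $\mathbb{A}_0^\irm$ is introduced by left-multiplication, and the crux of the proof is the verification that $\mathbb{A}_0^\irm Z$ \emph{is} symmetric. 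The paper does this via $\mathbb{A}_0^\irm\QQ=\QQ\Sigma(u)$ with $\Sigma(u)=D(u)-D(u)\PP^\tsf(\partial_X F)^{-1}\PP D(u)=\QQ^\tsf(\QQ D(u)^{-1}\QQ^\tsf)^{-1}\QQ$, which turns $(\mathbb{A}_0^\irm Z)_{\ell m}$ into $\sum_i(e^i\cdot\Sigma(u)\xi^\ell)(e^i\cdot\Sigma(u)\xi^m)\,\partial_{x_\nn}\mu_i/u_i$, manifestly symmetric in $\ell,m$. Your main-obstacle paragraph thus points at a nonexistent inversion step rather than at this symmetry check, which is where the genuine work lies. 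A further minor slip: differentiating $F(X;w)=0$ in the $\wii$-direction gives $\mathrm{D}_{\wii}X=(\partial_X F)^{-1}$ (since $\partial_{w_m}F_\ell=-\delta_{\ell m}$ for $m\in\iirm$), not $-(\partial_X F)^{-1}\PP^\tsf$; the paper does not actually need $\mathrm{D}_{\wii}X$ explicitly, since the $m\in\iirm$ contributions are simply lumped into $g$ via \eqref{eq:gnlot}.
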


\begin{proof}
We differentiate \eqref{psirankp} to find that
\begin{align}\label{eq:aux10}
	\partial_{w_{m}} \log u_i = \xi^{m}_i+\sum_{k\in\iirm} \partial_{w_m}X_k(w)\xi^k_i , \quad
	m \in \irm.
\end{align}
Introducing $\mu_i := \sum_{j=1}^n b_{ij} u_j$, it follows that~\eqref{hyper} is equivalent to 
	\begin{align}\label{hyper2}
		\partial_{t} w_k = \sum_{i=1}^n \xi^k_i \nabla \log u_i \cdot \nabla \mu_i = \sum_{i=1}^n\xi^k_i \sum_{m\in \irm} \partial_{w_m}\log u_i \nabla w_m \cdot \nabla \mu_i + \ggo_k , \ k \in\irm,\qquad
	\end{align}
	where $\nabla \mu_i = \sum_{j\in \iirm} \lambda_j \xi^j_i \nabla w_j$ does not depend on the gradient of the hyperbolic variables, and 
	\begin{align}\label{eq:gnlot}
		\ggo_k = \ggo_k(w, \nabla \wii) := \sum_{i=1}^n\xi^k_i \sum_{m\in \iirm} \partial_{w_m}\log u_i \nabla w_m \cdot \nabla \mu_i 
	\end{align}
	is a quadratic expression in the second argument.
	Thus, for $\nn = 1,\ldots,d$, the critical term multiplying $\partial_{x_\nn} w_m$ in equations \eqref{hyper2} equals
	\begin{align}\label{shr}
		Z_{km} &= Z_{km}(w, \partial_{x_\nn} \wii) 
		:= \sum_{i=1}^n\xi^k_i \partial_{w_m}\log u_i \partial_{x_\nn} \mu_i \nonumber\\
		&= \sum_{i=1}^n \xi^k_i \bigg(\xi^{m}_i+\sum_{j\in\iirm} \partial_{w_m}X_j(w) \xi^j_i \bigg)\partial_{x_\nn} \mu_i,\quad k,m\in\irm,
	\end{align}
where the last equality follows from~\eqref{eq:aux10}.
	Using identity \eqref{compute}, we have
	\begin{align*}
		u_i \sum_{j\in\iirm} \partial_{w_m}X_j(w) \xi^j_i = e^i \cdot\big[ D(u) \PP^{\sf T} \partial_{w_m} X(w)\big] =- e^i \cdot \big[ D(u)\PP^{\sf T} ( \partial_X F)^{-1} \PP D(u) \xi^m \big] ,
	\end{align*}
	where $e^i$ is the standard $i^{\rm th}$ unit vector of $\mathbb{R}^n$.
	We combine this result with \eqref{shr}:
	\begin{align}
		Z_{km} &= \sum_{i=1}^n \xi^k_i u_i \bigg( \xi^{m}_i+\sum_{j\in\iirm} \partial_{w_m}X_j(w) \xi^j_i\bigg)\frac{\partial_{x_\nn} \mu_i}{u_i} \\
		&= \sum_{i=1}^n \xi^k_i e^i \cdot \big[\big(D(u)- D(u)\PP^{\sf T}
		(\partial_X F)^{-1}\PP D(u)\big) \xi^m\big]\frac{\partial_{x_\nn} \mu_i}{u_i} 
		\\ \label{eq:Z2}
		&=\sum_{i=1}^n \xi^k_i e^i \cdot (\Sigma(u)\xi^m)
		\frac{\partial_{x_\nn} \mu_i}{u_i},
	\end{align}
where $\Sigma(u) :=  D(u)-D(u) \PP^{\sf T} ( \partial_X F)^{-1} \PP D(u)$.
	A computation using \eqref{eq:dxF.inv} and the third identity in~\eqref{eq:matrix.id} show that
	\begin{align*}
		\Sigma(u) = \QQ^{\sf T} \big(\QQ [D(u)]^{-1} \QQ^{\sf T}\big)^{-1} \QQ .
	\end{align*}
	
The symmetric positive definite matrix $\mathbb{A}_0^\irm(w) \in \mathbb{R}^{(n-\rk) \times (n-\rk)}$, given by
	\begin{align}\label{A0}
		\mathbb{A}_0^\irm(w) 
		= \QQ \Sigma(u) \QQ^{\sf T} = \big(\QQ D(u)^{-1}\QQ^{\sf T}\big)^{-1} ,\quad u:=\Psi(w),
	\end{align}
is our candidate for the symmetriser. 
	We note that the identity $\QQ \QQ^\tsf = \mathbb{I}_{n-\rk}$ and the form of $\Sigma(u)$ imply that 
\begin{align}\label{eq:AQ}
 		\mathbb{A}_0^\irm(w) \QQ 
		= \QQ \QQ^\tsf\big(\QQ D(u)^{-1} \QQ^\tsf\big)^{-1}\QQ \QQ^\tsf\QQ
		= \QQ\QQ^\tsf\big(\QQ D(u)^{-1} \QQ^\tsf\big)^{-1}\QQ	= \QQ\Sigma(u).\qquad
\end{align}
	For $\ell,m \in\irm $ and $\nn =1,\ldots,d$, we define 
	\begin{align*}
		\big(\mathbb{A}^\irm_1(w, \partial_{x_\nn}\wii)\big)_{\ell m}  
		:= \sum_{j\in\irm} (\mathbb{A}_0^\irm(w))_{\ell j}Z_{jm}
		(u,\partial_{x_\nn} \wii) .
	\end{align*}
Using~\eqref{eq:Z2} and~\eqref{eq:AQ}, we compute
\begin{align*}
		\big(\mathbb{A}_1^\irm(w,\partial_{x_\nn}\wii)\big)_{\ell m}  
		&= \sum_{i=1}^n\sum_{j\in\irm}(\mathbb{A}_0^\irm(w))_{\ell j}\QQ_{ji}
		\big(e^i\cdot(\Sigma(u)\xi^m)\big)\frac{\partial_{x_\nn} \mu_i}{u_i} \\
		&= \sum_{i=1}^n\sum_{j\in\irm}\QQ_{\ell j}\Sigma(u)_{ji}
		\big(e^i\cdot(\Sigma(u)\xi^m)\big)\frac{\partial_{x_\nn} \mu_i}{u_i} \\
		&=  \sum_{i=1}^n \big(e^i\cdot (\Sigma(u) \xi^{\ell})\big) 
		\big(e^i \cdot (\Sigma(u) \xi^m)\big)\frac{\partial_{x_\nn} \mu_i}{u_i} ,
\end{align*}
which is a symmetric expression in $\ell$ and $m$. In matrix notation, we have
\begin{align*}
	\mathbb{A}_1^\irm(w,\partial_{x_\nn}\wii) = \QQ \Sigma(u)D(u)^{-1} 
	D[\PP^{\sf T} \lambda \partial_{x_\nn} w]\Sigma(u) \QQ^{\sf T} .
\end{align*}
Hence, after left-multiplication by $\mathbb{A}_0^\irm(w)$, 
equations~\eqref{hyper2} turn into the system
\begin{align}\label{hyper3}
	\mathbb{A}_0^\irm(w) \partial_{t} \wi = \sum_{\nn =1}^d 
	\mathbb{A}_1^\irm(w,\partial_{x_\nn}\wii) 
	\partial_{x_\nn}\wi + f^\irm(w, \nabla \wii) ,
\end{align}
whose principle part is symmetric and
where $f^\irm(w, \nabla \wii):=\mathbb{A}_0^\irm(w)\ggo(w, \nabla \wii)$.
Combining \eqref{parab11} and \eqref{hyper3}, 
we have obtained a composite symmetric hyperbolic--parabolic normal form as asserted.
\end{proof}

\begin{remark}
In some cases, we may derive explicit expressions for the 
symmetriser. For instance, if $\rk=1$ and $b_{ij} = k_i k_j$, 
formula~\eqref{A0}, the definition of $\Sigma(u)$ and the fact that $\PP= \pm(\sum_{i=1}^n k_i^2)^{-1/2}k^\tsf\in\mathbb{R}^{1\times n}$ 
lead to
\begin{align*}
	\mathbb{A}_0^\irm(w) = \QQ \bigg(D(u) 
	- \frac{D(u){k} \otimes D(u){k}}{\sum_{i=1}^n k_i^2 u_i}
	\bigg) \QQ^{\sf T} ,\quad u:=\Psi(w),
\end{align*}
where $k=(k_1,\dots,k_n)^\tsf$.
\end{remark}

\begin{remark}
In Section~\ref{ssec:normal.form.rk1}, we have obtained a different symmetriser, 
which takes a simple diagonal form. On the other hand, the corresponding 
transformation~\eqref{eq:transf.explicit} was not constructed using an orthonormal 
system. This shows that other choices for the basis $(\xi^1, \ldots,\xi^n)$ might be 
practically relevant, at least in the rank-one case. In a similar spirit, the 
observations in Appendix~\ref{app:2nd.normal.form}
show that more involved nonlinear multipliers might also be considered.
\end{remark}

\subsection{Local classical solutions  in the general case}\label{ssec:classial.gen}

We recall from the preceding section that the change of variables $\Phi$ maps $\mathbb{R}_+^n$ diffeomorphically onto $\mathcal{D} := \mathbb{R}^{n-\rk}\times \PP\mathbb{R}_+^n$. For $\wii \in \PP\mathbb{R}_+^n$, we define
\begin{align*}
\rho(\wii) := \sup_{\PP\zeta = \wii} \inf_{i=1,\ldots,n}\zeta_i > 0 .
\end{align*}

\begin{theorem}[Local classical solutions]\label{thm:ex.classical.nf.gen}
	Let $s> d/2+1$ and $\win=(\wi,\wii)\in H^{s}(\mathbb{T}^d)$ with 
	$\bar\rho:=\min_{\mathbb{T}^d} \rho(\win_\iirm) >0$. 
	Then there exists a time $T=T(\|\win\|_{H^s},\bar \rho)>0$ 
	and a unique function $w=(\wi,\wii)\in C([0,T];H^s)$ with
	$\inf_{(0,T)\times\mathbb{T}^d}
	\rho(\wii)\ge\bar\rho/2$ and
	\begin{align}
		&\partial_t\wi\in  C([0,T];H^{s-1}),\quad \wi\in C^1([0,T]\times\mathbb{T}^d), \\
		&\partial_t\wii,\,\nabla^2\wii\in  L^2(0,T; H^{s-1}(\mathbb{T}^d))\cap C_\loc((0,T]\times\mathbb{T}^d)
	\end{align}
	that is a classical solution of system~\eqref{eq:sys.parab-hyperb} in $(0,T)\times\mathbb{T}^d$
	and satisfies $w(0,\cdot)=\win$.
\end{theorem}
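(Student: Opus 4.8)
Our strategy is to transcribe, with minor modifications, the proof of Theorem~\ref{thm:ex.for.symH}. By Proposition~\ref{prop:normalform.B}, the transformed system~\eqref{eq:nf.B} has the same two-block structure as~\eqref{eq:sys.parab-hyperb}: a first-order symmetric system~\eqref{eq:nf.B.h} for $\wi$, with symmetriser $\mathbb{A}_0^\irm$ positive definite on all of $\mathcal{D}$ and symmetric flux matrices $\mathbb{A}_1^\irm$, coupled to a symmetric strongly parabolic system~\eqref{eq:nf.B.p} for $\wii$ whose coefficient $\mathbb{A}_1^\iirm(w) = D^\iirm(\lambda)\PP D(\Psi(w))\PP^\tsf D^\iirm(\lambda)$ is positive definite whenever $\Psi(w)\in\mathbb{R}_+^n$. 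Two features distinguish~\eqref{eq:nf.B} from the rank-one case: the parabolic block is a $\rk\times\rk$ system rather than a scalar equation, and the hyperbolic coefficients $\mathbb{A}_1^\irm(w,\partial_{x_\nn}\wii)$ and source $f^\irm(w,\nabla\wii)$ depend (linearly, resp.\ quadratically) on the first spatial derivatives of $\wii$. Neither feature affects the $L^2$-based $H^s$ energy machinery: a symmetric strongly parabolic system admits the same Gronwall-type high-order estimates as a scalar divergence-form equation, and the $\nabla\wii$-dependence of the hyperbolic part is of exactly the type already handled in~\eqref{eq:hyperbolic.sys}. The role of $r=\min_{\mathbb{T}^d}\win_n$ in Section~\ref{sec:loc.ex} is now played by $\bar\rho=\min_{\mathbb{T}^d}\rho(\win_\iirm)$: since $\rho$ is continuous and positive on the open cone $\PP\mathbb{R}_+^n$ and tends to $0$ at its boundary, a lower bound $\rho(\wii)\ge\bar\rho/2$ together with an $L^\infty$ bound on $w$ confines $\Psi(w)$ to a fixed compact subset $\dom_0\Subset\mathbb{R}_+^n$, making $\mathbb{A}_1^\iirm$ uniformly positive definite. (If $\rk=n$ there is no hyperbolic block, and the argument reduces to its purely parabolic part.)

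Concretely, we mollify the datum to obtain $z^\ell\to\win$ in $H^s$ with uniform $H^s$- and value bounds, and define a Picard iteration: given $v=w^{\ell-1}$, let $w^\ell=(\wi^\ell,\wii^\ell)$ solve the linear decoupled system consisting of $D^\iirm(\lambda)\partial_t\wii^\ell = \divv(\mathbb{A}_1^\iirm(v)\nabla\wii^\ell)$ and $\mathbb{A}_0^\irm(v)\partial_t\wi^\ell = \sum_{\nn=1}^d\mathbb{A}_1^\irm(v,\partial_{x_\nn}v_\iirm)\partial_{x_\nn}\wi^\ell + f^\irm(v,\nabla v_\iirm)$, with $w^\ell|_{t=0}=z^\ell$; classical linear theory (\cite{LSU_1968} for the parabolic block, \cite[Ch.~2]{BenGaSerre_2007} for the hyperbolic one) gives smooth iterates on the lifespan of the previous one. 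We then reproduce the two energy estimates of Lemma~\ref{l:step1}. For $\wii^\ell$: testing the $\partial^\alpha$-differentiated parabolic system ($|\alpha|\le s$) with $\partial^\alpha\wii^\ell$, using uniform positive definiteness of $\mathbb{A}_1^\iirm(v)$ and the Moser/commutator inequalities~\eqref{eq:Leibniz}--\eqref{eq:Moser}, yields $\int_0^t\|\nabla\wii^\ell\|_{H^s}^2\,\dd\tau\le F(t)$ as in~\eqref{eq:s+1}, and then, via the equation, $\|\partial_t\wii^\ell\|_{L^2(0,t;H^{s-1})}\le C(R,\bar\rho^{-1})$. For $\wi^\ell$: left-multiplying by $\mathbb{A}_0^\irm(v)$, differentiating by $\partial^\alpha$ and testing with $\partial^\alpha\wi^\ell$ reproduces the identity~\eqref{eq:energy.id.s}, where the remainder is estimated via~\eqref{eq:Leibniz}--\eqref{eq:Moser} and all coefficient terms (including those involving $\nabla^2 v_\iirm$ and $\partial_t v$, the latter bounded through the frozen equations by $C(1+\|\nabla v_\iirm\|_{H^s})$) are controlled in $L^2$ in time by~\eqref{eq:s+1}. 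Gronwall's inequality then gives $\|w^\ell\|_{X^s_t}^2\le 2(\Lambda_1/\lambda_1)R^2<(KR)^2$ on a uniform interval $[0,\tast]$, furnishing a lower bound on the common lifespan.

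The one genuinely new point is the analogue of the ``control of values'' Lemma~\ref{l:control.val}. As there, $\wi^\ell$ stays close to its initial data on a short time interval because $\partial_t\wi^\ell$ admits a uniform bound in $C(\mathbb{T}^d)$ (via $H^{s-1}\hookrightarrow C(\mathbb{T}^d)$) directly from~\eqref{eq:nf.B.h} and~\eqref{eq:Leibniz}--\eqref{eq:Moser}. The scalar maximum principle~\eqref{eq:val.p}, which handled the parabolic component in the rank-one case, is unavailable for the system~\eqref{eq:nf.B.p}; instead we use that $\partial_t\wii^\ell\in L^2(0,\tast;H^{s-1})\hookrightarrow L^2(0,\tast;C(\mathbb{T}^d))$ yields $\wii^\ell\in C^{1/2}([0,\tast];C(\mathbb{T}^d))$ with a norm depending only on $R$ and $\bar\rho^{-1}$, so that $\|\wii^\ell(t)-z^\ell_\iirm\|_{C(\mathbb{T}^d)}\le Ct^{1/2}$; hence, for $t$ small, $\rho(\wii^\ell(t))\ge\bar\rho/2$ and $\Psi(w^\ell(t))\in\dom_0$, which closes the bootstrap. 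With the uniform bounds in place, the remainder copies Sections~\ref{sssec:convergence}--\ref{ssec:proof.original.variables}: subtracting consecutive iterates and running the low-norm ($L^2$) stability estimate, together with the time-splitting recursion used to absorb the $\partial_t$-terms, gives $\sum_{\ell}\mathsf{N}^\ell_{\tast}<\infty$ and convergence $w^\ell\to w$ in $X^\sigma_{\tast}$ for every $\sigma<s$, the limit being a strong solution in the regularity class~\eqref{eq:subopt.reg.w}; the temporal continuity $\wi\in C([0,\tast];H^s)$ follows from the energy-identity/weak-lower-semicontinuity argument of Lemma~\ref{l:reg.hyperbolic}, using Lemma~\ref{l:uniq.basic}~\ref{it:uniq.hp} for the time-reversed hyperbolic subsystem with $\wii$ treated as a fixed coefficient, and uniqueness of $w$ follows from the same $L^2$ estimate.

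It remains to establish the interior regularity $\partial_t\wii,\nabla^2\wii\in C_\loc((0,\tast]\times\mathbb{T}^d)$. As in Section~\ref{sec:loc.ex}, $\wi\in C([0,\tast];H^s)\cap C^1([0,\tast]\times\mathbb{T}^d)$ together with \cite[Ch.~II, Lemma~3.1]{LSU_1968} gives $\nabla\wi\in C^{\beta/(1+\beta)}([0,\tast];C(\mathbb{T}^d))$ for $\beta\in(0,\min\{s-d/2-1,1\})$, so the coefficient $\mathbb{A}_1^\iirm(w)=D^\iirm(\lambda)\PP D(\Psi(w))\PP^\tsf D^\iirm(\lambda)$ satisfies a space-time H\"older condition away from $t=0$. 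One then invokes interior Schauder-type estimates for strongly parabolic \emph{systems} (e.g.\ \cite{GM_1987}, in place of the scalar result \cite[Ch.~V, Thm.~5.4]{LSU_1968} used in the rank-one case) together with an approximation-plus-uniqueness argument for the linear parabolic system (with $\wi$ acting as a fixed parameter), to conclude. We expect this last step to be the main obstacle: at the level of $L^2$ high-order estimates the symmetric strongly parabolic system for $\wii$ behaves exactly like a scalar equation, so the energy part is a near-verbatim transcription of the estimates in Section~\ref{sec:loc.ex}; but the pointwise regularisation of $\partial_t\wii$ and $\nabla^2\wii$ must be extracted from parabolic regularity theory for systems whose coefficients carry only the limited (H\"older-in-time, $H^s$-in-space) smoothness inherited from the hyperbolic components.
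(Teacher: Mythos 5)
Your proof is correct and follows the same approach as the paper's: the key modification---replacing the scalar maximum principle by the short-time $C(\mathbb{T}^d)$-continuity estimate for $\wii^\ell$ obtained from the $L^2(0,T_*;H^{s-1})$ bound on $\partial_t\wii^\ell$ (itself a consequence of the $H^s$ parabolic energy estimate and the equation)---is exactly what the paper does. The one point worth making explicit is that the paper builds the lower bound $\rho(\wii^\ell)>\bar\rho/4$ into the \emph{definition} of the iteration time $t_\ell$, so that the energy estimate for $\wii^\ell$ (which needs uniform parabolicity of $\mathbb{A}_1^\iirm(w^{\ell-1})$ on $(0,t_{\ell-1})$) can be derived \emph{before} the improved control of values; this is precisely what makes your bootstrap non-circular and should be stated rather than implied.
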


\begin{proof}
By virtue of Theorem~\ref{thm:normalform.B}, we have reduced the question of the local existence of strong solutions to system~\eqref{eq0} to that of constructing strong solutions to system~\eqref{eq:nf.B}.
For the latter, we may essentially follow the proof of Theorem~\ref{thm:ex.for.symH},
and we only describe the necessary modifications. 
The main point is that for $\rk>1$, we no longer have a maximum principle for the parabolic problem. 
In particular, we need to ensure that, on a short time interval, the approximate solutions stay in an appropriate domain of uniform parabolicity of the parabolic subsystem.
We therefore modify the definition of $t_\ell\in(0,t_{\ell-1}]$, now requiring it to be the maximal time less than or equal to
$t_{\ell-1}$ such that the smooth solution $w^\ell$ to the linear approximate problem analogous to~\eqref{eq:sys.vw} satisfies
\begin{align}
	\|w^\ell\|_{X_t^s}< KR\quad\text{and}\quad 
	\inf_{(0,t)\times\mathbb{T}^d} \rho(\wii^\ell) > \frac{\bar\rho}{4}
	\quad\text{for all }t<t_\ell.
\end{align}

In the next step, we derive an estimate analogous to that in Lemma~\ref{l:step1}~\ref{it:ubd.gradw1}. Thanks to the relatively simple symmetric form~\eqref{eq:nf.B.p} of the quasilinear parabolic subproblem, this is achieved essentially in the same way as in the proof of Lemma~\ref{l:step1}~\ref{it:ubd.gradw1}.
(Since the matrix $D^\iirm(\lambda)$ multiplying $\partial_t\wii$ in~\eqref{eq:nf.B.p} is constant, we may even bring  the parabolic subsystem in a canonical form by the change of variables $\wii\mapsto D^\iirm(\lambda^{1/2})\wii$.)
 We deduce an estimate of the form
\begin{align}\label{eq:155}
	\int_0^t\|\nabla\wii^\ell(\tau)\|^2_{H^s}\dd\tau\le C(R,\bar\rho)\quad\text{for all }t\in(0,\min\{t_{\ell-1},1\}),\;\ell\in\mathbb{N}_+.
\end{align}
Let us emphasise that we do not yet need the improved control of the values
to conclude~\eqref{eq:155}. 

At this stage, we are in a position to derive a refined control of the values substituting for Lemma~\ref{l:control.val}. For the hyperbolic components $\wi$, we proceed as before. For the parabolic components, we rely on the following estimate for all
 $0\le t\le\min\{t_\ell,1\}$
\begin{align}
	\|\wii^\ell(t)-\wii^\ell(0)\|_{C(\mathbb{T}^d)}
	\le C\int_0^t\|\partial_t\wii^\ell(\tau)\|_{H^{s-1}}\dd\tau
	\le \sqrt{t}C(R,\bar\rho),
\end{align}
where in the second step we used the bound~\eqref{eq:155} in conjunction with the equation in order to control $\partial_t\wii^\ell$ by suitable spatial derivatives.
Thus, by choosing $T_1\in(0,1]$ small enough depending on $\bar\rho$ and $R$, we can ensure that 
\begin{align}
w^\ell(t,x)\in\mathcal{D}_0:=\bigg\{\tilde w\in\dom:|\tilde w|< 2LR\text{ and }\rho(\tilde w_\iirm)>\frac{\bar\rho}{2}\bigg\}
\end{align}
 for all $(t,x)\in (0,\widehat t_\ell)\times \mathbb{T}^d$ and $\ell\in \mathbb{N}$, where $\widehat t_\ell:=\min\{t_\ell,T_1\}$.
At this point, we may proceed with the proof of the lower bound $\widehat t_\ell> T_*>0$ along the lines of the proof of Lemma~\ref{l:step1}~\ref{it:lowerbd.time}.

The convergence in a weaker norm and regularity results analogous to those in Lemma~\ref{l:convergence} can be deduced as in Section~\ref{sssec:convergence}. 
The regularity $\wi\in C([0,{\tast}];H^s)$ is obtained in the same way as in the proof of Lemma~\ref{l:reg.hyperbolic}.

It remains to prove the regularity 
\begin{align}\label{eq:reg.parab.sys}
	\partial_t\wii,\, \nabla^2\wii\in C_\loc((0,{\tast}]\times\mathbb{T}^d).
\end{align}
 	As in the proof of~\eqref{eq:reg.w1C21}, we show that the gradient $\nabla\wi$ of the hyperbolic component satisfies a space-time H\"older condition.
 	 Moreover, since $\wii\in W^{1,2}([0,T_*];H^{s-1})$,   this component also satisfies a space-time H\"older condition. 
 	 Applying the linear theory for strongly parabolic systems in divergence form (see for instance the  Schauder-type estimate in~\cite[Theorem~2.1]{GM_1987}),
 	 we find that $\nabla \wii$ and hence $\nabla w$ satisfy a space-time H\"older condition.
 	 Thus, the coefficient matrix $\mathbb{A}_1^\iirm(w)$ of the parabolic subsystem is sufficiently regular to deduce, by invoking once more classical linear theory,  interior H\"older regularity of $\partial_t\wii$ and $\nabla^2\wii$, which implies~\eqref{eq:reg.parab.sys}.
This completes the proof of Theorem~\ref{thm:ex.classical.nf.gen}.
 \end{proof}

Theorem~\ref{thm:cross.gen} is a consequence of Theorem~\ref{thm:ex.classical.nf.gen} and Proposition~\ref{prop:normalform.B} (with $\Psi=\Phi^{-1}$ as in Lemma~\ref{l:diffeo.gen}).
The regularity of $Bu$ asserted in Theorem~\ref{thm:cross.gen} follows from the identity
$$
  Bu = \begin{pmatrix}\QQ \\ \PP\end{pmatrix}^\tsf
  \begin{pmatrix} 0 \\ D^\iirm(\lambda)\wii \end{pmatrix},
$$ 
combined with the regularity of $\wii$ obtained in Theorem~\ref{thm:ex.classical.nf.gen}.

\appendix

\section{Alternative transformations}\label{app:2nd.normal.form}

\subsection{A general ansatz}

Here, we briefly sketch an ansatz  towards a characterisation of
the set of possible diffeomorphisms $w=\Phi(u)$, $u\in \widehat\dom:=\mathbb{R}_+^n$,
leading to a normal form of hyperbolic--parabolic type.
For simplicity, we restrict to the rank-one case $\rk=1$ and abbreviate $n':=n-1$.
As in Section~\ref{sec:extension}, we introduce the partition 
$\irm=\{1,\dots,n'\}$ and $\iirm=\{n\}$, write
 $(w_\irm,w_\iirm)=(\Phi_\irm(u),\Phi_\iirm(u))$ and use the
notational conventions introduced above.
We further set $\Psi=\Phi^{-1}$.

For the hyperbolic components, the essential condition for cancelling the 
second-order derivatives is $\mathrm{D}\Phi_\irm(u)D(u)B\equiv0$.
For problem~\eqref{eq:sys.Darcy.p}, i.e.\ with
$B= {k}\otimes {a}$ for vectors $k=(k_1,\dots,k_n)^\tsf$, 
$a=(a_1,\dots,a_n)^\tsf$, $k_i$, $a_i>0$, this amounts to requiring that 
$\mathrm{D}\Phi_\irm(u) D(k)u=0\in\mathbb{R}^{n'}$.
Hence, the map $\Phi_\irm$ must be constant along the integral curves
$\gamma_{u_*}(t)=D(\ee^{kt})u_*$, $t\in\mathbb{R}$, $u_*\in\widehat\dom$,
of the vector field $V(u):=D(k)u$.
Any candidate mapping must thus satisfy
\begin{align}\label{eq:cond.const}
	\Phi_\irm(\gamma_{u_*}(t))=c(u_*)\quad \text{for all }t\in\mathbb{R}, u_*\in\widehat\dom.
\end{align}
Notice that for the transformation in Section~\ref{sec:extension}, this condition is fulfilled with
\begin{align*}
	{\Phi_\irm}_i(\gamma_{u_*}(t))=\sum_j\QQ_{ij}\log(\gamma_{u_*,j}(t))=\sum_j \QQ_{ij}(k_jt+\log(u_{*,j}))=\sum_j \QQ_{ij}\log(u_{*,j}),
\end{align*}
where we used the fact that the rows of the matrix $\QQ$ defined in~\eqref{eq:defQ} 
are orthogonal to ${k}$. 
For the transformation~\eqref{eq:transf.explicit}, property~\eqref{eq:cond.const} follows from a similar calculation.
In Section~\ref{ssec:example.alt}, we will briefly discuss a different change of coordinates that is subject to~\eqref{eq:cond.const}. 

With the choice $\wii:=a\cdot u$ as the diffusive variable and under condition~\eqref{eq:cond.const}, 
system~\eqref{eq:sys.Darcy-rang-1} in the new variables $w=\Phi(u)$ takes the form
\begin{align}
	\partial_tw_\irm &= \nabla\wii\cdot\mathbb{Y}(w)\nabla\wi + f(w,\nabla \wii), \\
	\partial_tw_\iirm &= \divv\big(\mathfrak{a}(w)\nabla\wii\big),
\end{align}
where $\mathfrak{a}(w) = \sum_ia_iu_ik_i$ and
\begin{align}
	\mathbb{Y}(w) &= \mathrm{D}\Phi_\irm(u)D(k)D_{w_\irm}\Psi(w) ,
	\\f(w,\nabla \wii) &= \mathrm{D}\Phi_\irm(u)D(k)\mathrm{D}_{w_\iirm}\Psi(w)
	|\nabla \wii|^2,
 \end{align}
and we recognise the structure of~\eqref{eq:sys.parab-hyperb}.

We note that $\mathbb{Y}$ can be written as
\begin{align*}
	\mathbb{Y}(w) &= \mathrm{D}_{u_\irm}\Phi_\irm(u)D^\irm(k)
	\mathrm{D}_{\wi}\Psi_\irm (w) + \mathrm{D}_{u_\iirm}\Phi_\irm(u)k_n
	\mathrm{D}_{\wi}\Psi_\iirm (w),
\end{align*}
where $D^\irm(  k):=\diag(k_1,\dots,k_{n'})$.
If $\mathrm{D}_{u_\irm}\Phi_\irm(u)$ is diagonal, it commutes with $D^\irm(  k)$, and hence the  expression for $\mathbb{Y}(w)$ 
can be simplified, using the fact that $\mathrm{D}\Phi_\irm(u)\mathrm{D}_{w_\irm}\Psi(w)=\mathbb{I}_{n'}$ in the first term on the right-hand side.  This is essentially the technique used in Section~\ref{sec:nf.explicit}. The following section provides an example where $\mathrm{D}_{u_\irm}\Phi_\irm(u)$ takes a more complicated form.

\subsection{An example}\label{ssec:example.alt}

Consider the system
\begin{align*}
	\partial_tu_i=\divv(k_iu_i\nabla (  k\cdot u)),
\end{align*}
which falls into the setting of~\eqref{eq:sys.Darcy-rang-1} with the choice $  a:=  k$.
We then let $\mathcal{\widehat D}:=\mathbb{R}_+^n$,
$\mathcal{D}_1:=\mathcal{E}\times\mathbb{R}_+$, $\mathcal{E}:=\{\wi\in(0,1)^{n'}:\sum_{i\in\irm}w_i<1\}$
and define $\Phi:\mathcal{\widehat D}\to\mathcal{D}_1$ by
\begin{align}\label{eq:transf.alt}
	w_i=\Phi_i(u)=\begin{cases}
		\frac{1}{\LL(u)}u_i^{1/k_i}, & 1\le i\le n', \\
		\sum_{j=1}^n k_ju_j, &i=n,
	\end{cases}
\end{align}
where $\LL(u):=\sum_{j=1}^nu_j^{1/k_j}$.
Definition~\eqref{eq:transf.alt} readily shows that $\Phi$ fulfils condition~\eqref{eq:cond.const}.
We observe that in the special case where all $k_i$ equal (without loss of generality we may take $k_i=1$), the change of variables~\eqref{eq:transf.alt} is more regular near zero and reduces to that used in~\cite{BHIM_2012}.

 We assert that $\Phi:\mathcal{\widehat D}\to\mathcal{D}_1$ is a diffeomorphism.
For $i\in \irm$ and $j=1,\dots,n$, we compute
\begin{align*}
	\partial_{u_j}\Phi_i(u) = \frac{\Phi_i(u)}{k_iu_i}\delta_{ij}-\Phi_i(u)\frac{u_j^{1/k_j}}{L(u)}\frac{1}{k_ju_j}.
\end{align*}
In particular, the $(n'\times n')$-matrix $\mathrm{D}_{\ui}\Phi_\irm$ is the sum of 
a diagonal and a rank-one matrix. Moreover, 
\begin{align}
	\mathrm{D}_{u_\iirm}\Phi_\irm(u) = -a(u)\Phi_\irm(u), \quad\mbox{where \,}
	a(u)=\frac{u_n^{1/k_n}}{L(u)}\frac{1}{k_nu_n},
\end{align}
and $\mathrm{D}\Phi_\iirm(u)={k}^\tsf$. 
Using the formula $\det(M+\zeta\otimes\xi)=\det(M)+\zeta^\tsf(\mathrm{cof}M)\xi$, 
we see that 
$$
  \det \mathrm{D}_{u_\irm}\Phi_\irm=\prod_{\ell\in \irm}
	\frac{\Phi_\ell(u)}{k_\ell u_\ell}\bigg(1-\sum_{j\in I}w_j\bigg)>0.
$$ 
Similar calculations show that $\det \mathrm{D}\Phi>0$. The bijectivity of $\Phi$ 
from $\widehat\dom$ to $\dom_1$ is also elementary to verify, and we conclude the diffeomorphism property.

Again, let $\Psi$ be the inverse of $\Phi$. 
Using the following identities, involving $L=L(u)$,
\begin{align}
	u_j=(Lw_j)^{k_j},\; j\in \irm,\quad  
	L=\bigg(1-\sum_{\ell\in I}w_\ell\bigg)^{-1}u_n^{1/k_n},\quad
	w_n=\sum_{j\in I}k_jL^{k_j}w_j^{k_j}+k_nu_n,
\end{align}
we compute 
\begin{align}
  \partial_{w_i}\Psi_j(w) &= k_ju_j
	\bigg(\frac{\partial_{w_i}L}{L}
	+\frac{1}{w_j}\delta_{ij}\bigg),\quad j\in\irm,\;i=1,\dots,n, \\
 \partial_{w_i}\Psi_n(w) &= k_nu_n\bigg\{
\frac{\partial_{w_i}L}{L}
- \bigg(1-\sum_{j\in I}w_j\bigg)^{-1}\delta_{i\irm}\bigg\},\quad i=1,\dots,n,\\
  \frac{\partial_{w_i}L}{L}
	&= L\big(k_n^2u_n^{1-1/k_n}-k_i^2u_i^{1-1/k_i}\big)
	\bigg(\sum_{j=1}^n k_j^2u_j\bigg)^{-1},\quad i\in I,
\end{align}
where $\delta_{i\irm}:=1$ if $i\in \irm$ and $\delta_{n\irm}:=0$.
Since the transformation~\eqref{eq:transf.alt} is still non-smooth as soon as one of the densities vanishes, this change of variables does not lead to an improved local existence theory for classical solutions compared to that based on~\eqref{eq:transf.explicit}.

\begin{remark}
	In contrast to transformation~\eqref{eq:transf.explicit},  $\mathrm{D}_{u_\irm}\Phi_\irm$ is not diagonal in the present case. An alternative splitting of $\mathbb{Y}$ such as
	\begin{align}
		\mathbb{Y}(w)=k_n\mathbb{I}_{n'}+\mathrm{D}_{u_\irm}\Phi_\irm
		(D(k)-D(k_n))\mathrm{D}_{w_\irm}\Psi_\irm
	\end{align}
might therefore be favourable for a possible symmetrisation.
Thus, it suffices to find a positive definite matrix $\mathbb{A}_0(w)\in\mathbb{R}^{n'\times n'}_{\sym}$ such that the product
$\mathbb{A}_0(w)\mathrm{D}_{u_\irm}\Phi_\irm(D(  k)-D(k_n))
\mathrm{D}_{w_\irm}\Psi_\irm$ is symmetric.
	A computation yields
	\begin{align*}
		\mathbb{G}(w)_{i\ell} &:= \big(\mathrm{D}_{u_\irm}\Phi_\irm
		(D(  k)-D(k_n))\mathrm{D}_{w_\irm}\Psi_\irm\big)_{i\ell} \\
		&= (k_i-k_n)\delta_{i\ell}-\Phi_i(u)(k_\ell-k_n)+(k_i-k_n-\lambda(u))
		\Phi_i(u)\frac{\partial_{w_\ell}L}{L},
	\end{align*}
where	$\lambda(u)=\sum_{j\in\irm}(k_j-k_n)\Phi_j(u)$.
	We observe that $\mathbb{G}$ is a rank-two perturbation of a diagonal matrix, which means that the question of symmetrisability is not trivial in general.
\end{remark}

\section*{Acknowledgements}
The first two authors are grateful to the organizers of the WIAS Days 2022, where some ideas for this work were initiated. The second author would further like to thank Dr.\ Joachim Rehberg 
for helpful comments. 
The last author acknowledges partial support from
the Austrian Science Fund (FWF), grants P33010 and F65.
This work has received funding from the European
Research Council (ERC) under the European Union's Horizon 2020 research and
innovation programme, ERC Advanced Grant no.~101018153.


\end{document}